\documentclass{article}

\usepackage[english]{babel}

\usepackage[utf8]{inputenc}
\usepackage{stmaryrd}
\usepackage[utf8]{inputenc}
\usepackage{amsmath}
\usepackage{graphicx}
\usepackage{amssymb}
\usepackage{amsthm}
\usepackage{tikz-cd}
\usepackage{mathrsfs}
\usepackage[colorinlistoftodos]{todonotes}
\usepackage{enumitem}
\usepackage{yfonts}
\usepackage{ dsfont }
\usepackage{MnSymbol}
\usepackage{slashed}

\date{\today}
\newtheorem{thm}{Theorem}[section]
\newtheorem{lem}[thm]{Lemma}

\theoremstyle{definition}
\newtheorem{eg}[thm]{Example}

\newtheorem{cor}[thm]{Corollary}

\newtheorem{rmk}{Remark}
\newtheorem{prop}[thm]{Proposition}
\newtheorem{Def}[thm]{Definition}
\newtheorem{Question}{Question}

\newtheorem*{Acknowledgement}{Acknowledgement}

\newcommand{\cf}{\emph{cf.} }

\newcommand{\R}{\mathbb{R}}
\newcommand{\C}{\mathbb{C}}
\newcommand{\Z}{\mathbb{Z}}

\newcommand{\Q}{\mathbb{Q}}

\newcommand{\norm}[1]{\left\lVert#1\right\rVert}

	\title{Calibrated submanifolds, adiabatic limit, and gradient graphs}
	\author{Yang Li}

\begin{document}
		
		\maketitle

		\begin{abstract}
		We study the compactness question for certain special Lagrangians in semiflat SYZ fibrations (resp. associative submanifolds in Donaldson's proposal of collapsing coassociative K3 fibrations), and give some criterion for when gradient graphs emerge from the adiabatic limit. This gives a partial converse to the Donaldson-Scaduto proposal.
		\end{abstract}

		\section{Introduction and background}

		In many problems of calibrated geometry, one observes the emergence of \emph{gradient graphs} in the adiabatic limit, where the ambient manifold collapses to a lower dimensional manifold. We are primarily interested in three notable instances of this phenomenon:
		
		\begin{itemize}
			\item The emergence of tropical curves as adiabatic limits of holomorphic curves;

			\item Certain special Lagrangian submanifolds inside an SYZ fibration, whose projection to the SYZ base is a small thickening of a one-dimensional graph;
			
			\item The Donaldson-Scaduto picture of associative submanifolds inside $G_2$-manifolds.

		\end{itemize}

		The first case is already well understood via algebro-geometric and symplectic techniques, and the last two cases occur in some gluing constructions \cite{ChiuLiLin,ChiuLin}, in which balanced gradient graphs arise as part of the gluing data. The goal of this paper is an attempt to answer the converse compactness question: 
		
		\begin{Question}\label{Question}
			What conditions on the calibrated submanifolds can a priori ensure that balanced gradient graphs must appear in the adiabatic limit?
		\end{Question}

	%	We will now review the three cases from the constructive viewpoint.

		\subsection{Holomorphic curves and tropical curves}

		The simplest classical case concerns holomorphic curves in $(\C^*)^2$. We have a one-parameter family of rescaled logarithm maps
		\[
		\text{Log}_t: (\C^*)^2\to \R^2, \quad (z_1, z_2)\mapsto \frac{1}{|\log t|}(\log |z_1|, \log |z_2|).
		\]
		Now consider the family of holomorphic curves
		\[
		f_t(z)=\sum_{m\in \Delta} a_m t^{-\nu(m)} z^m =0,
		\]
		where $\Delta$ is a finite integral convex polytope in $\Z^2$ (called the Newton polytope), and $a_m\neq 0$, $\nu(m)\in \R$. The image of these holomorphic curves under $\text{Log}_t$ is known as the amoeba, and as $t\to 0$ it converges in the Hausdorff topology to the \emph{tropical curve}, or equivalently the non-smooth locus of the tropical polynomial
		\[
		\text{Trop}(f)= \max_{m\in \Delta} \langle m, x\rangle + \nu(m).
		\]
		This consists of a finite number of (possibly infinite) straight edges, given by 
		\[
		\langle m_1, x\rangle+ \nu(m_1)= \langle m_2, x\rangle+ \nu(m_2)= \max_{m\in \Delta}  \langle m, x\rangle + \nu(m),
		\]
		joined at a finite number of vertices.

		\begin{rmk}
			Tropical degenerations of holomorphic curves has been developed into a highly elaborate theory. Some notable applications include the tropical correspondence theorem for Gromov-Witten invariants by Mikhalkin \cite{Mikhalkin}, the theory of exploded manifolds by B. Parker \cite{Parker}, and the Gross-Siebert program on mirror symmetry \cite{GrossSiebert}, to name just a few. There are multiple approaches including tropical geometry, J-holomorphic curve theory, log geometry, and non-archimedean geometry. We will not attempt to improve on the existing theory, but the reader should keep tropical curves in mind as the simplest manifestation of the emergence of graphs in the adiabatic limit.
		\end{rmk}

		\subsection{Special Lagrangians inside semi-flat SYZ fibrations}

		Let $(X, J,\omega, \Omega)$ be an $n$-dimensional \emph{almost Calabi-Yau} manifold, namely $(X, J, \omega)$ is K\"ahler, and $\Omega$ is a nowhere vanishing holomorphic volume form, such that $\frac{1}{n!} \omega^n=  \frac{\sqrt{-1}^{n^2}} {2^n}e^{n\rho} \Omega \wedge \overline{\Omega}$. Recall that a \emph{special Lagrangian} submanifold $L$ of phase $\hat{\theta}$ inside a Calabi-Yau $n$-fold $(X, I, \omega, \Omega)$ is a real $n$-dimensional submanifold, such that
		\[
		\omega|_L=0,\quad \text{Im}(e^{-i\hat{\theta}}\Omega)|_L=0.
		\]
		Up to a choice of orientation, $L$ is calibrated by the closed $n$-form $\text{Re}(  e^{-i\hat{\theta}}\Omega ) $ with respect to the Riemannian metric $\tilde{g}_X= e^{-\rho} g_X$.

		We consider the following \emph{semi-flat} metric ansatz. Let $B$ be a contractible open domain in $\R^n$,  carrying a Hessian metric
		\begin{equation}
		g= g_{ij} dx_i dx_j= \frac{\partial^2 u}{\partial x_i\partial x_j} dx_idx_j,
		\end{equation}
		where $u$ is a smooth and convex potential function. (In the Calabi-Yau case, $u$ solves the real Monge-Amp\`ere equation $\det(D^2 u)=1$ in the affine coordinates on $B$, but we prefer to work in the more general setting.)  
		We take $X= T^n\times B$  equipped with the K\"ahler structure
		\begin{equation}
		\begin{cases}
				\omega_\epsilon= \epsilon \sum g_{ij} dx_i \wedge dy_j,
				\\
				\Omega_\epsilon= \bigwedge_1^n (\epsilon dy_i- \sqrt{-1}  dx_i),
				\\
				 g_\epsilon=\sum g_{ij}(dx_idx_j+\epsilon^2 dy_i dy_j),
				 \\
				 e^{n\rho}= \det(g)= \det(D^2 u).
		\end{cases}
		\end{equation}
		%\[
		%\omega= \sum dx_i \wedge dy_i, \quad \Omega= \bigwedge_1^n (dy_i+ \sqrt{-1} g_{ij} dx_j),\quad g= g_{ij}dx_idx_j+ g^{ij}dy_i dy_j,
		%\]
		where %$y_i dx_i$ is the natural Liouville 1-form on $T^* B$, and 
		$y_i$ are $\Z^n$-periodic fibre coordinates on $T^n$. In particular, the fibres of $\pi: X\to B$ are special Lagrangians of phase zero, and $X$ collapses down to $(B, g_{ij}dx_idx_j) $ as the fibre length parameter $\epsilon\to 0$.

		\begin{rmk}
			When $B=\R^n$, and $u= \frac{1}{2} |x|^2$, 
			then $X$ is $T^n\times \R^n$ 
			equipped with a Euclidean metric, where $T^n$ has length scale of order $\epsilon$. More generally, the SYZ conjecture philosophy \cite{SYZ} predicts that semi-flat Calabi-Yau metrics are up to small error the local models for the generic regions of compact polarised Calabi-Yau manifolds near the large complex structure limit  \cite{Livaluative}. 
		\end{rmk}

		We now seek special Lagrangians $L$ of phase $\hat{\theta}=-\frac{\pi}{2}$, whose projections to $B$ is a small thickening of some one-dimensional graph. We now give an informal discussion about the gluing construction in  \cite{ChiuLiLin}. %We assume $\hat{\theta}\neq 0$ mod $\pi \Z$. 

		Near the edge $e$ of the graph, but staying away from the vertices, $L$ is modelled on 
		 a $T^{n-1}$-bundle over the edge $e$, where the $T^{n-1}$-fibres are flat subtori inside the $T^n$ fibres of $\pi: X\to B$. Taking a given $T^{n-1}$-fibre as the initial data, we can locally view $L$ as the evolution of $T^{n-1}$. Let $f_1,\ldots f_{n-1}$ be the generators of the $T^{n-1}$-subtorus, and $\delta(e)=\bigwedge f_i\in H_{n-1}(T^n) \simeq H^1(T^n)$ specifies its homology class, which induces an element of $H^1(T^n,\R)\simeq T_x^* B$ still denoted $\delta(e)$. The $(n-1)$-conditions
		\[
		\omega(f_i,\cdot)=0,\quad i=1,\ldots n-1
		,\]
		specifies a unique tangent direction of $T_xB$ up to scale, equivalently described as the dual element of $\delta(e)$ with respect to the metric $g_{ij}dx_idx_j$ on $B$. The one additional condition
		\[
		\text{Im}(e^{-i\hat{\theta}} \Omega)|_L=0
		\]
		amounts to the constancy of the angular coordinate on $T^n/T^{n-1}$. %(This requires the assumption that $\hat{\theta}\neq 0$ mod $\pi \Z$, because otherwise the solution of the initial value problem would simply be the SYZ fibre $T^n$, which is a special Lagrangian with phase zero.) 
		The upshot is that the edge $e$ follows the flow trajectory
		\begin{equation}\label{gradientflow1}
			\frac{dx_i}{dt}=\sum_j g^{ij} (x) \delta(e)_j,
		\end{equation}
		where $g^{ij}$ is the inverse matrix of $g_{ij}$. In other words, the edge follows the \emph{gradient flowline} of the linear function $x\mapsto \langle \delta(e), x\rangle$.

		Near the vertices several edges $e$ can come together, and we need special Lagrangians with several cylindrical ends as a gluing model; it turns out that the hyperk\"ahler rotation of algbraic curves in $(\C^*)^2$ provides a large supply of such local models \cite{ChiuLiLin}. The homology classes of the several $T^{n-1}$ ends associated to the outgoing egdes $e$, must sum to zero, giving rise to a \emph{balancing condition} at each vertex $v$,
		\begin{equation}\label{balancing1}
			\sum_{ \text{$e$ adjacent to $v$} } \delta(e)=0.
		\end{equation}
		A graph satisfying the gradient flow equation (\ref{gradientflow1}) along edges, and the balancing condition (\ref{balancing1}) at the vertices, shall be called a \emph{balanced gradient graph}.

		\subsection{Donaldson-Scaduto picture}

		Donaldson \cite{Donaldson} proposed a program to study $G_2$-manifolds with adiabatic coassociative K3 fibrations. We will work in a slightly more general setting, requiring the closedness of the $G_2$-structure, but not the torsion-free condition.

		Let $\pi: X\to B$ be a smooth  K3 fibration over a contractible open domain $B\subset \R^3$, with local coordinates $x_1,x_2,x_3$. Suppose $H: B\to H^2(K3,\R)$ is a positive section, namely the pullback metric of the intersection form
		\[
	g=	\frac{\partial H}{\partial x_i}\cdot 	\frac{\partial H}{\partial x_j} dx_i\otimes dx_j
		\]
	is positive definite on $B$. For each $p\in B$, after a linear change of coordinates, we may assume $\frac{\partial}{\partial x_i}$ is a $g$-orthonormal basis of $T_p B$. The cohomology classes 
	\[
[	\omega_i]:=  \frac{\partial H}{\partial x_i} \in H^2(K3,\R),\quad i=1,2,3,
	\]
		span a positive definite subspace in $H^2(K3,\R)$. We assume that for any $p\in B$,  this 3-dimensional subspace is not orthogonal to any  $(-2)$-classes in $H^2(K3,\Z)$.
		So by the Torelli theorem, this determines up to fibre diffeomorphism a hyperk\"ahler triple $(\omega_1, \omega_2, \omega_3) $ on the K3 surface fibre, where $\omega_i$ is the hyperk\"ahler symplectic form in the class $[\omega_i]$.

	Upon choosing a horizontal distribution $\mathcal{H}$, we can regard $\omega_i$ as a vertical 2-form on the total space $X$, and consider $ \sum_1^3 \omega_i\wedge  dx_i$, which is independent of the linear change of coordinates. For an appropriate choice of $\mathcal{H}$, we have $d  (\sum_1^3 \omega_i\wedge  dx_i)=0$  modulo a horizontal-vertical type $(3,1)$-form. Upon adding $f dx_1\wedge dx_2\wedge dx_3$ for a suitable function $f$, we can modify $ \sum_1^3 \omega_i\wedge  dx_i$ into a closed 3-form which we denote by $\underline{\omega}$.
		We write $\underline{\lambda}= - \sqrt{\det(g)} dx_1dx_2dx_3$, which pulls back to a 3-form on $X$. Then 
	\[
	\phi_\epsilon := \epsilon^2 \underline{\omega}+ \underline{\lambda}
	\]
	defines  a closed $G_2$-structure depending on the small parameter $\epsilon>0$, and in particular determines a Riemannian metric $g_\epsilon$ on $X$ (as a caveat, this 3-form is in general not co-closed).

	Along a given fibre $\pi^{-1}(p)$, upon taking $g$-orthonormal base coordinates $x_1, x_2, x_3$ at $p\in B$, the Riemannian metric is $g_\epsilon= \epsilon^2 g_{K3}+ \sum_1^3 dx_i^2$ (modulo $O(\epsilon^2)$ error coming from the correction $fdx_1\wedge dx_2\wedge dx_3$ above), where $g_{K3} $ is the hyperk\"ahler metric on the given K3 fibre determined by $(\omega_1, \omega_2,\omega_3)$ with total volume 
	\[
\text{Vol}_{g_{K3}}(\pi^{-1}(p) )=	\int_{K3} \frac{\omega_i^2}{2}= \frac{1}{2},\quad i=1,2,3.
	\]
	By construction, the 3-form $\phi_\epsilon$ vanishes on the K3 fibres, so the K3 fibres are coassociative submanifolds.

	The 3-dimensional submanifolds $L\subset M$ are called \emph{associative} if they are calibrated by the closed 3-form $\phi_\epsilon$. Donaldson-Scaduto \cite{DonaldsonScaduto} suggested a way to produce gluing ansatz for associative submanifolds: one starts with a graph $\Gamma$ embedded in the base $B$, whose edges $e$ are the \emph{gradient flowlines} of the functions $H\cdot \delta(e)$ for some locally constant homology classes $\delta(e)\in H_2(K3,\Z)$, and the vertices $v$ of the graph satisfy the balancing condition
	\[
	\sum_{e \text{ adjacent to } v} \delta(e)=0\in H_2(K3, \Z).
	\]
	We call this data a \emph{balanced gradient graph}.

	The associative submanifold $L$ is approximately a fibration over the edges of $\Gamma$, whose fibres are holomorphic curves inside the hyperk\"ahler K3 surfaces in the homology class $\delta(e)$, while in the neighbourhood of the vertices, $L$ is locally modelled on an asymptotically cylindrical associative submanifold inside $K3\times \R^3$.

		\begin{rmk}
			In Donaldson's programme on adiabatic limits of $G_2$-manifold, the torsion free condition on the $G_2$-structure leads to the requirement that the positive section $H$ is a \emph{maximal submanifold}, namely its image  has zero mean curvature inside the flat vector space $H^2(K3, \R)$ with indefinite signature $(3, 19)$. By an appropriate choice of the horizontal connection $\mathcal{H}$, the closed $G_2$-structure would be approximately coclosed, and then one can hope to perturb $\phi_\epsilon$ to a torsion free $G_2$-structure.

			The more sophisticated part of Donaldson-Scaduto \cite{DonaldsonScaduto} speculates on the question of compactifying the associative submanifold inside the $G_2$-manifolds, and what can happen in a generic one-parameter family of associative submanifolds. % This involves the following phenomena:
			%\begin{enumerate}
			%    \item  To compactify the ambient $G_2$-manifolds, one needs to allow for singular fibres. The singularity of the fibration is a union of circles, which are fibred over a link inside the 3-dimensional base $B$, and the singularity is transversely modelled on the Lefshetz fibration $\C^3\xrightarrow{\sum z_i^2} \C$. 

			%    \item  The gradient flowlines can end on the link in $B$, and the associative submanifold is locally modelled on the Lefschetz thimble.
			
			%\item

			%\end{enumerate}
		\end{rmk}

		\begin{rmk}
			The Donaldson-Scaduto story has an analogue in the world of special Lagrangians. If we assume the $G_2$-structure is the product of a Calabi-Yau 3-fold with $S^1$, then the problem reduces to finding special Lagrangian submanifolds inside a Calabi-Yau 3-fold with a holomorphic K3 fibration over $\mathbb{CP}^1$, where the K3 fibres are shrinking down as $\epsilon\to 0$;  a special case is treated in the gluing construction of Chiu-Lin \cite{ChiuLin}. In another variant situation, the K3 fibres are replaced by ALE or ALF spaces, so that the ambient Calabi-Yau 3-fold is an ALE/ALF gravitational instanton fibration over a Riemann surface. The stability conditions on the Fukaya category, which are conjecturally related to the special Lagrangians in these Calabi-Yau manifolds, have been considered in \cite{Smith, BridgelandSmith}.
		\end{rmk}

		\subsection{Main results}

	Henceforth we consider a sequence of  special Lagrangian cycles $L_i$ of phase $\hat{\theta}=- \frac{\pi}{2}$  in the semiflat SYZ fibration on $X$ (resp. associative cycles $L_i$ inside Donaldson's collapsing K3 fibrations), where the ambient parameter $\epsilon\to 0$ in the $i\to +\infty$ limit. We will sometimes suppress the sequential index for brevity. Recall that $\pi: X\to B$ is the projection map to the base.

	\begin{Def}\label{Def:uniformvolumebound}
	We say the special Lagrangians $L_i$ inside $(X, g_\epsilon, \omega_\epsilon, \Omega_\epsilon)$ (resp. associative $L_i$ inside $(X, g_\epsilon, \phi_\epsilon)$) satisfy the \emph{uniform linear	volume bound}, if $\epsilon \to 0 $ as $i\to +\infty$, and 
	for each given compact subset $K\subset B$, there is 
	some uniform constant $r_0, C$ depending on $K$ but independent of $i,\epsilon$, such that $B_g(p, r_0)$ is properly contained in $B$, and 
	\begin{equation}
\sup_{   p\in K }	\epsilon^{1-n} \norm{L_i} (  \pi^{-1} B_g(p, r_0)   ) \leq C.
	\end{equation}
	resp. 
	\[
	\sup_{p\in K }	\epsilon^{-2} \norm{L_i}( \pi^{-1} B_g(p, r_0)   ) \leq C.
	\]
	\end{Def}

		\begin{rmk}
	The intuition is that the special Lagrangian (resp. associative) is supposed to have volume growth only along one base dimension, while the remaining $(n-1)$ (resp. two) fibre directions have length scale $O(\epsilon)$, and contributes the factor $\epsilon^{n-1}$ in the special Lagrangian case (resp. $\epsilon^2$ in the associative case).

		\end{rmk}

		\begin{rmk}
			(Compactification)
		Suppose the semiflat SYZ fibration $X\to B$ (resp. Donaldson's adiabatic K3 fibration) 
		arise as an open subset of a global fibration over a compact base $\bar{X}\to \bar{B}$. 
		 and the special Lagrangians (resp. associative submanifolds) on $X$ are obtained by restrictions of global examples $\bar{L}\subset \bar{X}$. Then to check the uniform linear mass bound, one only needs to control the global integral $\int_{\bar{L}} \text{Re}(e^{-i\hat{\theta}} \Omega)$ (resp.  $ \int_{\bar{L} }\phi_\epsilon$), which only depends on the homology class of $\bar{L}$. 
		\end{rmk}

		We are interested in capturing the \emph{large scale limiting behaviour} of the sequence $L_i$. We recall the notion of  rectifiable 1-currents valued in a finite rank lattice $\Lambda$. In our applications, 
		\[
		\Lambda= 
		\begin{cases}
			H_{n-1}(T^n,\Z)\quad &\text{special Lagrangian case},
			\\
			 H_2(K3,\Z),\quad & \text{ associative case}.
		\end{cases}
		\]

		\begin{Def}\label{Def:Lambdavaluedcurrent}
		A \emph{$\Lambda$-valued 1-rectifiable current} $T$ in $B$ is a current of integration over an 1-rectifiable set with a $\Lambda$-valued weight: 
		\begin{equation}
		T(\beta)= \int_\Gamma  \langle \beta(x), \xi(x)\rangle  \delta(x) d\mathcal{H}^1(x),\quad \forall \beta \in \Omega^1_{cpt}(B),
		\end{equation}
		where $\Gamma= \text{spt}(T)\subset B$  is an 1-rectifiable subset,
		$\xi(x)$ is a unit vector spanning the approximate tangent space $T_x \Gamma$ 
	 for $\mathcal{H}^1$-almost every $x\in \Gamma$, and $\delta: \Gamma\to \Lambda$ is a locally integrable function with target in the  lattice $\Lambda$. We say $T$ is closed if 
	 \[
	\partial T(f)= T(df)= 0\in \Lambda\otimes_\Z \R, \quad \forall f\in C^\infty_{cpt}(B).
	 \]

		\end{Def}

		For every $p\in B$ and $\delta\in \Lambda= H_{n-1}(T^n)$ (resp. $\Lambda= H_2(K3,\Z))$, we define 
		\begin{equation}
		|\delta|_p:=  \text{min mass of integral currents in the homology class $\delta$ on  $\pi^{-1}(p)$} ,
		\end{equation}
		where in the special Lagrangian case, the fibre metric is taken to be $g_{ij} dy_i dy_j$ on the torus $\pi^{-1}(p)$, while in the associative case, the fibre metric is the hyperk\"ahler metric $g_{K3}$ on the K3 fibre $\pi^{-1}(p)$  normalised to volume $\frac{1}{2}$.

\begin{eg}\label{eg:minareatorus}
In the special Lagrangian case, given a nonzero $\delta\in H_{n-1}(T^n)$, there is an oriented $\Z$-basis $f_1,\ldots f_n$ for $H_1(T^n)$ such that $\delta= m \wedge_1^{n-1} f_i$ for some $m\in \Z_+$. Then $|\delta|_p$ is achieved by the sum of $m$ parallel copies (counting multiplicity) of the subtorus $T^{n-1}$ generated by $f_1,\ldots f_{n-1}$, and 
\[
|\delta|_p= m \sqrt{  \det( ( g(f_i, f_j) )_{(n-1)\times (n-1)}   ) }=     \sqrt{ \det(g)  g^{ij} \delta_i \delta_j }.
\]
Here we identify $\delta \in H_{n-1}(T^n)$ as an element of $T_p^* B$ via contraction with $\text{Re}(\sqrt{-1} \bigwedge_1^n(dy_i- \sqrt{-1} dx_i) )$. Via the metric $g$, this $\delta$ induces a tangent vector $g^{ij}\delta_j \frac{\partial}{\partial x_i}   
	$ in $T_p B$. For later convenience, we define the unit tangent vector 
\begin{equation}
v_p(\delta):= \frac{ g^{ij}(p) \delta(p)_j \frac{\partial}{\partial x_i}  }{    \sqrt{   g^{ij}\delta(p)_i \delta(p)_j }   }.
\end{equation}

\end{eg}

\begin{eg}\label{eg:K3holocurve}
In the associative case, for a $g$-unit tangent vector $v\in T_p B$, then $\omega_v=\iota_v \underline{\omega}$ gives a well defined K\"ahler form on the hyperk\"ahler K3 fibre $\pi^{-1}(p)$ in the homology class $[\omega_v]= \nabla_v H\in H^2(K3,\R)$, with corresponding complex structure $I_v$. We say $\delta\in H_2(K3,\Z )$ is \emph{admissible}, iff $\delta$
is represented by an $I_v$-holomorphic curve (possibly with multiplicity) for some choice of $v$, iff
\[
|\delta|_p =  \int_\delta \omega_v =    \delta\cdot (\nabla_v H)(p).
\]
In this case, the vector $v$ can be recovered from $\delta$ by the formula
\begin{equation}
v_p(\delta):=   \frac{ \delta\cdot (\nabla H)(p)  }{ |\delta|_p   } ,
\end{equation}
where the dot product is the natural pairing between $\delta \in H_2(K3,\Z)$ and $\nabla H\in H^2(K3,\R)\otimes T_p B$. .
\end{eg}

\begin{Def}\label{Def:gradientcycle}
Let $T$ be a $\Lambda$-valued 1-rectifiable current. %We define the \emph{density function} $\Theta:\Gamma\to \R_{\geq 0}$ by
%\begin{equation}
%\Theta(x)=  |\delta(x) |_x.
%\end{equation}
%(As a caveat, the density may not be integer valued!) 
The \emph{density measure} for $T$ is the Radon measure 
\[
\mu_T (f)= \int_\Gamma f(x) |\delta(x)|_x d\mathcal{H}^1 (x),\quad \forall f\in C^0_{cpt}(B).
\]

\end{Def}

		\begin{Def}\label{Def:densitymeasure}
	Let $T$ be a  $\Lambda$-valued 1-rectifiable current. We say $T$ is a \emph{$\Lambda$-weighted gradient cycle}, if 
	\begin{enumerate}
		\item  (closedness)  The $\Lambda$-valued current boundary $\partial T=0$.
		\item (gradient flow)  In the special Lagrangian case, we require for $\mathcal{H}^1$-a.e $p\in \Gamma$, the unit tangent vector $\xi(p)  =v_p(\delta)$ (\cf Example \ref{eg:minareatorus}).

		 In the associative case, we require that for $\mathcal{H}^1$-a.e $p\in \Gamma$, the class $\delta(p)\in H_2(K3,\Z)$ is admissible, and $\xi(p)= v_p(\delta)$
			(\cf Example \ref{eg:K3holocurve}).

	\end{enumerate}

		\end{Def}

		\begin{rmk}
		The balanced gradient graphs are special cases of $\Lambda$-weighted gradient cycles. Conversely, if a  $\Lambda$-weighted gradient cycle is supported on an embedded graph in $B$, then it coincides with a balanced gradient graph.
		Here the closedness condition captures the balancing condition at the vertices. 
		\end{rmk}

		Our main result is

		\begin{thm}\label{mainthm}
		Suppose the special Lagrangian integral currents $L_i$ (resp. associatives $L_i$) satisfy the uniform linear mass bound as $\epsilon\to 0$. Then up to passing to subsequence, there exists a $\Lambda$-weighted gradient cycle $T$, such that the followings hold:
		\begin{enumerate}
			\item  The sequence of pushforward measures $\epsilon^{1-n}\pi_* \norm{L_i} $ (resp. $\epsilon^{-2}   \pi_* \norm{L_i} $)    on $B$ converge weakly to the density measure $\mu_T$.

			\item  Suppose $\alpha$ is any compactly supported smooth $n$-form (resp. 3-form), locally of the form $\sum  \alpha_i \wedge dx_i$ where $\alpha_i$ restricts to a closed form on the fibres, so that $\alpha$ naturally defines a compactly supported 1-form $[\alpha]$ on $B$ valued in $H^{n-1}(T^n,\R)$ (resp. $H^2(K3,\R))$, which can be paired with the $\Lambda$-valued current $T$. Then $\int_{L_i}\alpha$ converges to $\langle T, [\alpha] \rangle $.

			\item On any open subset $U$ properly contained in $B$, the projection images $\pi(\text{spt}(L_i)) $ converges in Hausdorff distance  to $\Gamma= \text{spt}(\mu_T)$.
			
		\end{enumerate}
		\end{thm}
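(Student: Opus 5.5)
Fibrewise slicing plus calibration: pushing $L_i$ forward by fibre integration gives a family of closed 1-currents on $B$ whose mass is bounded by the rescaled volume of $L_i$, with equality only in the gradient direction. The argument is organised around this mass–calibration comparison.

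\textbf{Step 1: the pushed-forward currents.} For a 1-form $\beta$ on $B$ valued in $\Lambda^*\otimes\R$ (equivalently $H^{n-1}(T^n,\R)$, resp. $H^2(K3,\R)$), represent $\beta$ by a form $\alpha=\sum\alpha_j\wedge dx_j$ with $\alpha_j$ fibrewise harmonic: constant-coefficient forms in $dy$ on the torus, resp. $g_{K3}$-harmonic forms on the K3 fibre. Define
\[
T_i(\beta)=\epsilon^{1-n}\int_{L_i}\alpha \quad(\text{resp. } \epsilon^{-2}\int_{L_i}\alpha).
\]
The factor $\epsilon^{1-n}$ absorbs the scaling of the $dy$ directions in $\Omega_\epsilon$, resp. $\epsilon^2$ in $\phi_\epsilon$.

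Since $L_i$ is a cycle and $d(f\alpha_j)$ differs from $df\wedge\alpha_j$ by a fibrewise exact piece, $\partial T_i=0$ up to errors that vanish as $\epsilon\to0$. In the associative case this uses the choice of horizontal distribution $\mathcal H$ and the fact that the correction $f\,dx_{123}$ is $O(\epsilon^2)$.

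By the slicing theorem for $\pi|_{L_i}$, we have $T_i=\int \langle L_i,\pi,x\rangle\, dx$ in the sense that the slices are $(n-1)$-cycles (resp. 2-cycles) in the fibres with classes $\delta_i(x)$. The coarea formula gives the pointwise bound
\[
|T_i|(\beta)\le \epsilon^{1-n}\,\pi_*\norm{L_i}\bigl(|\beta|^*\bigr),
\]
where the comparison norm is $|\delta|_p$, since each slice has fibre mass at least $|\delta|_p$ after rescaling.

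\textbf{Step 2: compactness.} By the uniform linear volume bound, the $T_i$ have locally bounded mass and vanishing boundary. The Federer–Fleming/White compactness for flat chains with coefficients in the lattice $\Lambda$ then gives a subsequential limit $T$, which is a closed $\Lambda$-valued rectifiable 1-current. Integrality of the slice classes makes the lattice structure persist in the limit.

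Passing to a further subsequence, $\epsilon^{1-n}\pi_*\norm{L_i}\to\mu$ weakly, and $\mu\ge\mu_T$ by lower semicontinuity of mass together with the comparison norm from Step 1.

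\textbf{Step 3: calibration identity — the main obstacle.} The key point is to show $\mu\le\mu_T$ and that the tangent direction equals $v_p(\delta)$. Use the calibration $\mathrm{Re}(e^{-i\hat\theta}\Omega_\epsilon)=\mathrm{Im}\,\Omega_\epsilon$ (resp. $\phi_\epsilon$). At leading order this form equals $\epsilon^{n-1}\sum_j \pm dx_j\wedge(\iota_{\partial_{x_j}}\cdot)$, which is exactly of the type $\alpha$, plus terms with at least two $dx$ factors; the latter have higher order in $\epsilon$ relative to $\epsilon^{n-1}$. Hence
\[
\epsilon^{1-n}\norm{L_i}(\pi^{-1}f)=T_i(f\,\Theta)+o(1),
\]
where $\Theta$ is the $\Lambda^*$-valued 1-form $x\mapsto \bigl(\delta\mapsto\langle \delta,\cdot\rangle\bigr)$. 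This $\Theta$ is precisely the form whose comass with respect to $|\cdot|_p$ is $1$, with equality exactly when $\xi=v_p(\delta)$. For tori this is Cauchy–Schwarz in Example \ref{eg:minareatorus}; for K3 it is Wirtinger, $|\delta|_p\ge\delta\cdot\nabla_vH$, with equality iff $\delta$ is admissible in Example \ref{eg:K3holocurve}.

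The delicate parts are controlling the mixed terms with two or more $dx$ factors uniformly, using only the mass bound, and justifying the limit $T_i(f\Theta)\to T(f\Theta)$, since $\Theta$ is merely continuous. Flat convergence plus bounded mass handles the latter. Passing to the limit gives $\mu(f)=T(f\Theta)\le\mu_T(f)$, so $\mu=\mu_T$, and equality in the comass inequality holds a.e. This yields the gradient flow condition, proving items 1 and 2; for item 2 the same computation applies to an arbitrary $\alpha$.

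\textbf{Step 4: Hausdorff convergence.} One direction follows from weak convergence. For the other, apply the monotonicity formula for calibrated currents in $X$ at scale $\gg\epsilon$ but $\ll1$, after passing to the local universal cover of the fibre directions to undo the collapse. This gives a lower density bound: any point of $\pi(\mathrm{spt}L_i)$ carries rescaled mass $\ge c\,r$ in its $r$-ball. Hence the limit points lie in $\mathrm{spt}\,\mu_T$.

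In the associative case the K3 fibres cannot be unwrapped, so instead I would use monotonicity in the $\epsilon$-rescaled metric, which is close to $K3\times\R^3$. This gives mass $\ge c\epsilon^2 r$ via a lower bound on the minimal mass of nonzero classes $|\delta|_p$, by integrality and compactness.
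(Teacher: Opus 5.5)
Your proposal has a genuine gap. It omits the paper's key tool, a monotonicity formula at every base scale $\epsilon\le r\le r_0$. Without it, Steps 2, 3 and 4 each fail.

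\textbf{Step 2 (rectifiability and integrality).} Slicing $L_i$ by $\pi$ gives $0$-dimensional slices, since $\dim L_i=\dim B$ in both cases, not fibre cycles. At fixed $\epsilon$, your $T_i$ is a $\Lambda\otimes\R$-valued normal current with a diffuse density over the $n$-dimensional set $\pi(\mathrm{spt}\,L_i)$; think of a piece of $L_i$ that is a graph over the base. So the $T_i$ are not $\Lambda$-valued flat chains, White's compactness theorem does not apply, and the limit $T$ is a priori only a closed real normal current. Calibration cannot repair this. A smeared family of gradient flowlines with fixed weight, $\delta\otimes\int[\ell_s]\rho(s)\,ds$, is closed and attains equality in the calibration inequality, yet is not rectifiable. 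What excludes such limits is the two-sided bound $C^{-1}r\le\mu(B_g(p,r))\le Cr$ for $p\in\mathrm{spt}\,\mu$ and all small $r$. This needs monotonicity of $r^{-1}\norm{L_i}(\pi^{-1}B_g(p,r))$ at every scale $\epsilon\le r\le r_0$. The paper proves it with a primitive $\beta$ of the calibration built from the base-radial vector field, satisfying $|\beta|\le r\,\mathcal{H}^{n-1}_{g_\epsilon}$ (resp. $r\,\mathcal{H}^{2}_{g_\epsilon}$) on $\pi^{-1}(\partial B_g(p,r))$. Preiss's theorem then gives rectifiability of $\mathrm{spt}\,\mu$. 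Integrality of $\delta$ follows by slicing $L_i$ with the projection onto the approximate tangent line inside a thin cylinder, where the slices are closed fibre cycles of one fixed integral class.

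\textbf{Step 4 (Hausdorff convergence).} Standard monotonicity on the fibrewise universal cover at scale $r\gg\epsilon$ only gives $\norm{L_i}(\pi^{-1}B_g(p,r))\ge c\,\epsilon^n$. This is because an $r$-ball in the cover meets about $(r/\epsilon)^n$ fundamental domains, so after normalisation you get $c\,\epsilon$, not $c\,r$. In the K3 case, appealing to a minimal nonzero $|\delta|_p$ presupposes local slices that are closed and homologically nontrivial, which you do not have.

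\textbf{Step 3 (calibration identity).} This step rests on a false scaling claim: the parts of the calibration with three or more base factors are not of higher order in $\epsilon$. In $\phi_\epsilon$, the term $-\sqrt{\det g}\,dx_1\wedge dx_2\wedge dx_3$ has $g_\epsilon$-comass comparable to that of $\epsilon^2\underline{\omega}$. So $\int_{L_i}f\,dx_1\wedge dx_2\wedge dx_3$ is a priori of size $\norm{L_i}\sim\epsilon^2$, the same order as the term you keep. In the special Lagrangian case with $n\ge 3$, the terms with $k\ge 3$ factors $dx$ carry $\epsilon^{n-k}$ and again integrate to $O(\norm{L_i})$. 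No estimate using only the mass bound removes them, so your identity $\epsilon^{1-n}\norm{L_i}(\pi^{-1}f)=T_i(f\Theta)+o(1)$ is unjustified.

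The paper controls these terms only after localising. At a Lebesgue point $p$ with approximate tangent $v=\partial_{x_1}$, it applies Stokes in the cylinder $C_{p,t}$ with the primitive $\sum_{cyc}x_1(\epsilon^2\omega_1-\frac13dx_2\wedge dx_3)$. This is evaluated on good slices $L_{i,\pm t}$ of mass $O(\epsilon^2)$, supported where $|x_2|+|x_3|\le C\gamma t$. On such a slice, the $dx_2\wedge dx_3$ part integrates to zero because the slice is closed, and the other cyclic terms are $O(\gamma t\epsilon^2)$. This gives $\norm{L_i}(C_{p,t})=2\epsilon^2 t\langle\delta(p),[\omega_1]\rangle(1+O(\gamma))+O(\gamma t\epsilon^2)$. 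Projecting a good slice into $\pi^{-1}(p)$ then gives $|\delta(p)|_p\le\langle\delta(p),[\omega_1]\rangle$ directly for the integral minimal mass. That yields both the gradient condition and $\mu=\mu_T$.
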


	\begin{rmk}
		Our method will be robust against sufficiently small $C^0$ perturbations of the ambient almost Calabi-Yau structure (resp. closed $G_2$-structure). For instance, if we replace the closed $G_2$-structure $\phi_\epsilon$ by $\phi_\epsilon'$ such that $\norm{ \phi_\epsilon-\phi_\epsilon'}_{C^0}\leq \frac{C}{|\log \epsilon|}$ in the $\epsilon\to 0$ limit, and we consider associative integral currents $L_i$ with respect to $\phi_\epsilon'$ instead of $\phi_\epsilon$, then the conclusions of the main theorem still holds (See Remark \ref{rmk:logconvergencerate} below).
	\end{rmk}

		We also discuss the regularity question for the $\Lambda$-weighted gradient cycles. 
		This has some moral similarity with the Allard-Almgren structure theorem \cite{AllardAlmgren} on stationary 1-dimensional varifolds. As a consequence of the monotonicity formula for $\Lambda$-weighted gradient cycles, the density
		\[
		\Theta(p):= \lim_{r\to 0} \frac{1}{2r} \mu_T( B_g(p,r) )
		\]
		is well defined at every point $p\in \Gamma= \text{spt}(T)$. 
	The caveat is that the density $\Theta(p)$ may not be integers, and can vary continuously with $p\in \Gamma$, even though $\delta(p)$ takes value in the discrete lattice $\Lambda$.

		\begin{thm}\label{thm:regularitySlag}
			Suppose $T$ is a $\Lambda$-weighted gradient cycle with support $\Gamma\subset B$.
		In the special Lagrangian case, $\Gamma$ is a \emph{locally finite embedded graph};  in particular, on any compact subset of $B$, it contains only finitely many vertices, and each edge is a gradient flowline.
		\end{thm}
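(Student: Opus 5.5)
The plan is to combine the monotonicity formula for $\Lambda$-weighted gradient cycles (which gives the density $\Theta(p)$ everywhere on $\Gamma$) with a tangent-cone analysis and an $\varepsilon$-regularity statement that is forced to be exact by the discreteness of $\Lambda=H_{n-1}(T^n,\Z)$. The special Lagrangian case is favourable for two reasons. First, $\Lambda$ is a trivial lattice over $B$, so each $\delta\in\Lambda$ defines a closed covector $\delta=\sum\delta_jdx_j$ with constant coefficients. Second, the flowlines of $\nabla_g\langle\delta,x\rangle$ give, for each fixed $\delta$, a smooth foliation of $B$ by curves. Moreover $|\delta|_p=\sqrt{\det g\,g^{ij}\delta_i\delta_j}$ is a norm on $\Lambda\otimes\R$ depending continuously on $p$. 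Its triangle inequality $|\sum\delta_k|_p\le\sum|\delta_k|_p$ is strict unless all $\delta_k$ are positive multiples of one another, since it is a Euclidean norm.

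\emph{Step 1 (local finiteness of weights).} Fix a compact $K\subset B$. By monotonicity, $\mu_T(B_g(p,r))\le Cr$ uniformly for $p\in K$. Since $|\cdot|_p$ is uniformly comparable to a fixed norm, only finitely many $\delta\in\Lambda$ can occur as weights on a set of positive $\mathcal{H}^1$-measure in a given ball. This is easiest to see by slicing: closedness of $T$ and the gradient-flow condition imply that the slice of $T$ by level sets of a suitable linear function carries total weight bounded by the mass ratio. Hence on $K$ the weights lie in a finite set $F\subset\Lambda\setminus\{0\}$. For each $\delta\in F$, the part of $\Gamma$ with weight $\delta$ is a rectifiable subset of the leaves of the flowline foliation of $\delta$.

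\emph{Step 2 (tangent cones).} Rescale $T$ at $p\in\Gamma$ by factors $r\to0$. Since the metric freezes to the constant $g(p)$, any subsequential limit is a $\Lambda$-weighted gradient cycle in $\R^n$ for a constant metric. By monotonicity it is dilation invariant. For constant $g$ the flowlines are straight lines in the directions $g^{-1}\delta$, so a cone must be a finite union of rays from $0$ with weights $\delta_k\in F$, oriented along $\pm g(p)^{-1}\delta_k$, and balanced: $\sum\delta_k=0$ with outward orientations. Its density is $\Theta(p)=\frac12\sum_k|\delta_k|_p$.

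\emph{Step 3 ($\varepsilon$-regularity, the main obstacle).} I would show that if in $B_g(p,r)$ the cycle $T$ is weakly close to a single line of weight $\delta$ through $p$, then $T\llcorner B_g(p,r/2)$ is exactly one flowline of $\nabla_g\langle\delta,x\rangle$ with constant weight $\delta$. The tool is slicing by the function $h=\langle\delta,x\rangle$, which is monotone along every $\delta$-flowline. Closedness makes the $\Lambda$-valued flux through each regular level set $\{h=c\}$ equal to $\delta$, so each slice consists of points with weights $\delta_k'$ (with signs) summing to $\delta$. By the coarea formula, the mass of $T$ in the region is at least $\int\sum_k|\delta'_k|\,dc$, up to factors controlled by the angle between the flow direction of $\delta'_k$ and $\nabla h$. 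The mass is close to $|\delta|_p\cdot(\text{length})$. Combined with strictness of the triangle inequality, this forces almost every slice to have all weights nearly parallel to $\delta$ and essentially positively oriented. Since $F$ is finite, the weights $\delta'_k$ can only take finitely many values, so "nearly" upgrades to "exactly": almost every slice is a single point of weight $\delta$. Hence $\Gamma$ is there the graph over $h$ of a single leaf of the $\delta$-foliation.

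\emph{Step 4 (isolation of vertices).} Call $p$ regular if its tangent cone is a single line; by Step 3 a neighbourhood of a regular point is an open flowline segment. Suppose non-regular points $q_j\to p$ accumulated in $K$. Blow up at $p$ at scale $|q_j-p|$; a subsequence converges to a cone from Step 2, in which the rescaled $q_j$ tend to a point $q\ne0$ of the cone. Near $q$ the cone is a single line, so Step 3 (applied with closeness, via weak convergence and monotonicity-controlled masses) makes the $q_j$ regular for large $j$, a contradiction. Thus non-regular points (vertices) are discrete, hence finite on compact sets. Every other point lies on a flowline edge with constant weight. Each edge either ends at a vertex or leaves $K$, because edges are pieces of leaves of finitely many smooth foliations and have bounded length by the mass bound. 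This gives a locally finite embedded graph.
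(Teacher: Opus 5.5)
Steps 1, 2 and the blow-up argument of Step 4 are sound in outline; Step 4 even avoids needing uniqueness of tangent cones. The gap is in Step 3, and Step 4 depends on it entirely.

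Your mass comparison gives only an \emph{integrated} inequality. Let $D(c)\geq 0$ be the excess, with the angle factors, of $\sum_k|\delta'_k|$ over $|\delta|$ on the slice $\{h=c\}$. Then you get $\int D(c)\,dc\leq (o(1)+Cr)\,\ell$, where $\ell$ is the length of the $h$-interval, the $o(1)$ comes from weak closeness to the line, and $Cr$ comes from the variation of $g$. Finiteness of $F$ makes $D(c)$ either $0$ or at least some $c_0>0$. So the set of bad levels has measure at most $c_0^{-1}(o(1)+Cr)\ell$: \emph{most} slices are good, not almost every slice.

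Because these errors are additive rather than proportional to the defect, a short branching region inside the thin tube is not excluded. Think of a vertex where a branch whose weight is not parallel to $\delta$ leaves and then zigzags or recombines within the tube; it costs only a small mass excess at scale $r$. So ``nearly'' does not upgrade to ``exactly''. Separately, the conclusion ``exactly one flowline of weight $\delta$'' is false as stated. If $\ell_1,\ell_2$ are two nearby leaves of the $\delta_0$-foliation, then $\delta_0[\ell_1]+\delta_0[\ell_2]$ is weakly close to $2\delta_0[\ell_1]$. The correct target is a finite union of leaves carrying positive multiples of $\delta$.

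The missing idea is a mechanism forcing any non-parallel branch to travel a macroscopic distance. In the paper this is Lemma~\ref{lem:cone}: if the tangent cone at $q$ has a ray with weight in $A_w^+$, then $\Gamma$ meets $C_{q,w,\theta_0/3}\cap\partial B(q,t)$ for every $t<r$.

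Its proof applies Kirchhoff's law to $T\lfloor(C_{q,w,\theta}\cap B(q,t))$:
\begin{itemize}
\item at crossings of $\partial C_{q,w,\theta}$, the flow direction forces the inward-oriented weight into $A_w^+\cup A_w^0$;
\item after evaluating the balance identity on $w$, the $A_w^0$ terms are $O(r)$, while the $A_w^+$ terms and the contribution at $q$ are bounded below by $C^{-1}\theta_0$;
\item the number of crossings is bounded using the monotonicity term $\int d_q^{-1}|\nabla^\perp d_q|^2$ from Corollary~\ref{Cor:monotonicitygradientcycle}.
\end{itemize}
Choosing $w$ with $\delta_i\in A_w^0$ and the offending weight in $A_w^+$ contradicts the support lying in a thin sector. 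This gives item 3 of Theorem~\ref{thm:localstructure}. In the special Lagrangian case the weights on each $T_i$ are then positive multiples of $\delta_i$, so $T_i$ lies on leaves of a single smooth foliation.

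Your slicing approach can also be rescued in the special Lagrangian case, but only with sharper bookkeeping than you propose. First, $T$ attains equality in (\ref{eqn:integratedcalibration1}) and $\hat\Omega$ has a linear primitive. Hence the weighted mass between two levels $c_1<c_2$ at which all weights are parallel to $\delta$ equals $c_2-c_1$ \emph{exactly}. Second, on each slice the error from the variation of $g$ is bounded by a constant times the tube width times the non-parallel part of the weights. It is therefore absorbed slice by slice into the strict Cauchy--Schwarz defect. Neither ingredient appears in your proposal.
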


		A similar result holds for the associative case, subject to a technical assumption involving $H_2(K3)$ classes with bounded norm.

		\begin{thm}\label{thm:regularityassociative}
			Suppose $T$ is a $\Lambda$-weighted gradient cycle with support $\Gamma\subset B$.
		In the associative case, suppose the following holds at some $p\in \Gamma$: whenever there is some choice of complex structure $I$ on the hyperk\"ahler K3 surface $\pi^{-1}(p)$, such that two classes $\delta, \delta'\in H_2(K3,\Z)$ both admit an $I$-holomorphic curve representative, with areas $|\delta|_p, |\delta'|_p \leq \Theta(p)$, then $\delta, \delta'$ are proportional.

		Then there is some neighbourhood $U$ of $p\in B$, where $\Gamma\cap U$ is a finite embedded graph.

		\end{thm}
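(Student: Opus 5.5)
We prove Theorem \ref{thm:regularityassociative} by a blow-up (tangent cone) analysis at $p$. The main obstacle is ruling out an accumulation of vertices near $p$, and the hypothesis on pairs of holomorphic classes of area at most $\Theta(p)$ is tailored to exclude it.

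First we establish monotonicity. Fix $g$-orthonormal coordinates at $p$ and consider the function $r\mapsto \frac{1}{2r}\mu_T(B_g(p,r))$. On the support of $T$ the tangent is $\xi=v_q(\delta)$, and the density is $|\delta|_q=\delta\cdot\nabla_{\xi}H$. Hence $\mu_T$ is the pairing of $T$ with the closed $H^2(K3,\R)$-valued 1-form $dH$, up to a correction of order $O(|q-p|)$ coming from the variation of the metric. More precisely, for every admissible direction $w$ we have the calibration inequality $\delta\cdot\nabla_w H\le|\delta|_q$, with equality iff $w=\xi$. We test $\partial T=0$ against $\phi(|x-p|)\,(H(x)-H(p))$, written more concretely as the vector field $(x-p)\cdot\nabla$ applied to $H$. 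The usual argument then gives
\[
\frac{d}{dr}\Bigl(e^{Cr}\tfrac{1}{2r}\mu_T(B_r)\Bigr)\ge 0 .
\]
This yields the density $\Theta(p)$, its upper semicontinuity, and a lower density bound on $\Gamma$. The lattice structure gives a positive lower bound $c_0$ on $|\delta|_q$ for nonzero $\delta$, which is needed for that lower bound.

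Second, we take tangent cones. Rescale $T$ at $p$ by factors $r_k\to0$. The weights $\delta$ live in a discrete lattice, and their masses are bounded by the monotonicity bound, so $\delta$ ranges over a finite set of classes. Compactness of $\Lambda$-valued rectifiable currents with locally constant-in-the-limit data then gives a subsequential limit $T_0$. This limit is a closed $\Lambda$-weighted gradient cycle for the constant (flat) structure at $p$, and it is a cone by equality in monotonicity. In the flat model the gradient lines are straight rays from $0$ in directions $v_p(\delta_j)$. Hence $T_0=\sum_j[\![0,\infty)v_p(\delta_j)]\!]\,\delta_j$, with $\sum\delta_j=0$ and $\sum_j|\delta_j|_p=2\Theta(p)$.

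Third, we use the hypothesis to show each half-ray carries a single class and that the rays meet transversally. Each $\delta_j$ is $I_{v_j}$-holomorphic with $|\delta_j|_p\le\Theta(p)$, since $\sum_j|\delta_j|_p=2\Theta(p)$ and there are at least two rays. Suppose two distinct pieces of $\Gamma$ near $p$ had tangent rays in the same direction $v$. Then they carry classes that are both $I_v$-holomorphic with area at most $\Theta(p)$. By hypothesis these classes are proportional, so they define the same gradient flow direction and can be merged. Consequently the tangent cone is unique, being determined by finitely many distinct directions. This rules out the possibility that edges from different classes spiral or accumulate along a common direction.

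Fourth, we conclude from the cone picture. In a small ball $B_g(p,r)$ we apply an $\varepsilon$-regularity statement in a thin cone around each ray $v_j$. There the current is close to a single weighted ray with class proportional to $\delta_j$. Along each such piece the support is a gradient flowline, by ODE uniqueness for $\dot x=\nabla(H\cdot\delta)$ with $\delta$ fixed. Thus $\Gamma\cap U$ is a finite union of flowline arcs emanating from $p$. Any further vertex $q\neq p$ in $U$ would have density close to $\Theta$ of the nearby edge and so would be a regular point, so $\Gamma\cap U$ is a finite embedded graph.

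The hardest step is this $\varepsilon$-regularity near a single-direction region. It requires showing that no small vertices accumulate along a ray. We would first handle this by upper semicontinuity of density combined with the discreteness of admissible classes of bounded area. Any point near the ray has density at most $\Theta(p)+o(1)$, and its own tangent cone consists of classes that are holomorphic for nearly the same complex structure. Within the finite set of bounded-area classes, ``nearly the same complex structure'' forces holomorphicity for $I_v$ itself. The hypothesis then forces those classes to be proportional, so the tangent cone is a straight line and the point is not a genuine vertex.
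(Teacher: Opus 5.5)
There is a genuine gap, and it sits at the step you yourself call the hardest. The proof hinges on the following: in a thin sector $R_j$ around the ray $\R_+v_p(\delta_j)$ of $C_p$, every weight at every $q\in\Gamma\cap R_j$ (including the weights of all rays of $C_q$ when $q$ is a vertex) satisfies $v_p(\delta(q))=\pm v_p(\delta_j)$. Only then are all these classes holomorphic for the \emph{same} complex structure $I_{v_p(\delta_j)}$. The hypothesis then makes them proportional to $\delta_j$, and ODE uniqueness for $\nabla(H\cdot\delta_j)$ finishes.

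Your justification (density at most $\Theta(p)+o(1)$, hence ``holomorphic for nearly the same complex structure'') does not give this. Upper semicontinuity of $\Theta$ and finiteness of $A$ control \emph{which} classes can occur, not the directions $v_p(\delta)$ in which they point. Likewise, $\Gamma$ lying within angle $\gamma$ of the rays at scale $|q|$ says nothing about the infinitesimal directions of the rays of $C_q$. Even the sharper bound $\Theta(q)\le|\delta_j|_p+o(1)$ on $R_j$, which uniqueness of $C_p$ does give, only excludes a vertex where an edge of class $\delta_j$ splits into transverse pieces. That case fails because $|\delta'|_p+|\delta''|_p>\langle\delta'+\delta'',\nabla_{v_p(\delta_j)}H(p)\rangle$ whenever $\delta'$ or $\delta''$ is not $I_{v_p(\delta_j)}$-holomorphic. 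It does not prevent a branch from leaving some $q$ in a transverse direction and turning back at nearby vertices so as to stay inside the sector.

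For the same reason, your fourth-step claim is unsupported: a vertex whose density is close to that of the nearby edge need not be regular. There is no Allard-type $\varepsilon$-regularity here, and densities vary continuously. Your third step also already presupposes that pieces near $p$ carry classes holomorphic for exactly $I_v$.

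The missing ingredient is a maximum-principle (``no turning back'') statement, which the paper proves as Lemma \ref{lem:cone}: if $C_q$ has a ray with weight in $A_w^+$, then $\Gamma$ meets $C_{q,w,\theta_0/3}\cap\partial B(q,t)$ for every $0<t<r$. The proof goes as follows.
\begin{enumerate}
\item Restrict $T$ to $C_{q,w,\theta}\cap B(q,t)$ and use that the boundary of this restriction is exact (Kirchhoff's law in $\Lambda$).
\item Pair the resulting identity with $\nabla_wH(p)$, which is bounded below on $A_w^+$ and is $O(r)$ on $A_w^0$.
\item Control the number of crossings of $\partial C_{q,w,\theta}$ via the monotonicity formula.
\end{enumerate}
Choosing $w$ with $\delta_j\in A_w^0$ and $\delta(q)\in A_w^+$ then forces a branch of $\Gamma$ out of the thin sector. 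This is item 3 of Theorem \ref{thm:localstructure}. Combined with the fact that all weights near $p$ lie in $A$, it is exactly what makes the hypothesis of the theorem applicable.

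A smaller point: finiteness of the candidate cones does not by itself give uniqueness of the tangent cone. You also need that the set of subsequential blow-up limits is connected.
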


		\textbf{Organization}.  The paper will emphasize the close analogy between the special Lagrangians and the associative cycles $L$. We prove a monotonicity formula for the mass of $L$ over macroscopic balls in $B$ in section \ref{sect:monotonicity}.  This enables us to prove the main theorem \ref{mainthm} in section \ref{sect:limitgradientcycle}. We study the regularity question for $\Lambda$-weighted gradient cycles in section \ref{sect:regularity}, and end with some open discussions.

	%	comparison to Aaron Naber

				\section{Monotonicity formula}\label{sect:monotonicity}

				Since our main theorem is local on the base $B$, it is enough to consider points $p\in B$ 
				lying in a compact subset, and we suppose that $B_g(p,r_0)$ is properly contained in $B$, where $r_0$ is small enough to ensure the local ambient geometry on $\pi^{-1}(B_g(p,r))$ is sufficiently close to the product case.

				We write the base distance function $d_p= \text{dist}_g(p,\cdot)$, which pulls back to a function on $X$. The projection map $\pi: X\to B$ is up to $O(\epsilon)$-error a Riemannian submersion. Up to dilating the base metric by $1+ O(\epsilon)$, we may assume $\pi$ is 1-Lipschitz, so $|\nabla d_p|\leq 1$.

				We first consider the associative case.

				\begin{prop}\label{prop:monotonicityassociative}
				(Monotonicity formula) Suppose $ L$ is an associative cycle inside the closed $G_2$-manifold $(X,\phi_\epsilon)$, then for any $\epsilon\leq r_1<r_2\leq r_0$, there is some uniform constant $C$ such that
				\[
				\begin{split}
					&	e^{Cr_2}	r_2^{-1} 	\norm{L}(  \pi^{-1}B_g(p, r_2)  )- e^{C r_1} r_1^{-1} 	\norm{L}(  \pi^{-1}B_g(p, r_1)  )
					\\
						\geq &  
					\int_{    \text{spt}(L) \cap \pi^{-1} (B(p,r_2)\setminus B(p,r_1) ) }  d_p^{-1} ( 1- |\nabla d_p| ) d\norm{ L}  \geq 0.
				\end{split}
				\]
				\end{prop}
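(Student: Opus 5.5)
Since the associative cycle $L$ is calibrated by the closed form $\phi_\epsilon=\epsilon^2\underline{\omega}+\underline{\lambda}$, the plan is to obtain monotonicity by calibration and Stokes on the slabs $\pi^{-1}B_g(p,r)$, not by the first variation formula. In this setting the base plays the role of the ambient space and the K3 fibres are collapsed. The basic quantity will be $M(r):=\norm{L}(\pi^{-1}B_g(p,r))=\int_{L\cap \pi^{-1}B(p,r)}\phi_\epsilon$. Because $d_p$ is $1$-Lipschitz on $X$, the coarea formula gives
\[
M'(r)\geq \int_{L\cap\{d_p=r\}} \frac{1}{|\nabla^L d_p|}\, d\mathcal{H}^2 \geq \int_{L\cap\{d_p=r\}} d\mathcal{H}^2
\]
for a.e. $r$. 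The inequality with the weight $1-|\nabla d_p|$ should come from keeping track of exactly this gap.

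The central step is an isoperimetric-type comparison $M(r)\leq (r+Cr^2)\,\mathcal{H}^2(L\cap\{d_p=r\})$, which in differential form reads $\frac{d}{dr}\big(e^{Cr}r^{-1}M(r)\big)\geq 0$. To prove it I would construct a primitive. Take $g$-normal coordinates centred at $p$, where the metric is Euclidean up to $O(r)$. The form $\underline{\lambda}$ is, up to $O(r)$ error, $-d\big(\tfrac13 \iota_{x\cdot\partial_x}dx_1dx_2dx_3\big)$, and $\underline{\omega}=\sum\omega_i\wedge dx_i+f\,dx_{123}$ is closed. Using the radial vector field $R=\sum x_i\partial_{x_i}$ lifted horizontally, Cartan's formula $\mathcal{L}_R\phi=d\iota_R\phi$ shows that $\phi_\epsilon$ is the differential of $\iota_R\phi_\epsilon$ divided by its scaling weight, modulo $O(r)\phi_\epsilon$ errors. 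The weight is $1$ for the $\epsilon^2\sum\omega_i\wedge dx_i$ part (only one base factor) and $3$ for the $dx_{123}$ part. Since $L$ is a cycle, Stokes then gives $M(r)\leq\int_{L\cap\{d_p=r\}}\iota_R\phi_\epsilon+CrM(r)$. Finally $|\iota_R\phi_\epsilon|\leq |R|=r$ on unit 2-planes by the comass bound, and I will use the weight-$1$ version for both parts, which is the worse one. This yields $M(r)\leq r\,\mathcal{H}^2(\partial)+CrM$. Combining with the coarea bound, together with the pointwise refinement $\mathcal{H}^2$ versus $\int|\nabla^Ld_p|^{-1}$, gives
\[
\frac{d}{dr}\big(e^{Cr}r^{-1}M\big)\geq e^{Cr}r^{-1}\int_{\{d_p=r\}}\Big(\frac1{|\nabla^L d_p|}-1\Big)\geq \int_{\{d_p=r\}} d_p^{-1}\frac{1-|\nabla d_p|}{|\nabla^Ld_p|}.
\]
Integrating from $r_1$ to $r_2$ and applying coarea once more recovers the stated integrand $d_p^{-1}(1-|\nabla d_p|)\,d\norm{L}$, with $\nabla$ meaning the gradient along $L$.

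I expect the main obstacle to be the control of the error terms in the primitive. The horizontal distribution $\mathcal{H}$ is not flat, the classes $[\omega_i]$ vary with $x$, and the correction $f\,dx_{123}$ is present. Each of these contributes errors of relative size $O(r)$ after the normalisation at $p$. They have to be bounded by $Cr\,\phi_\epsilon|_L$ uniformly in $\epsilon$, which is what lets them be absorbed into the factor $e^{Cr}$. The restriction $r\geq\epsilon$ is used exactly here: the fibre-direction contributions of $\epsilon^2$-size terms are small compared with $r$ only at scales $r\gtrsim\epsilon$. I would first work with the exact product model $K3\times\R^3$ and then treat the general case as a $C^0$ perturbation via comass estimates.
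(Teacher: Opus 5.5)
Your proposal is essentially the paper's proof. The paper's primitive $\beta=\sum_{cyc}x_1(\epsilon^2\omega_1-\frac13\,dx_2\wedge dx_3)$ is exactly your weight-normalised contraction $\epsilon^2\iota_R\underline{\omega}+\frac13\iota_R\underline{\lambda}$ in the product model, and it is followed by the same slice bound $|\beta|\le r\,\mathcal{H}^2_{g_\epsilon}$, the coarea formula, and absorption of the $O(r)$ non-product errors into $e^{Cr}$.

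The one step to state more carefully is that slice bound, and the Stokes identity, which involves $\beta$ rather than $\iota_R\phi_\epsilon$. The bound does not follow from the comass estimate $|\iota_R\phi_\epsilon|\le r$, because lowering the coefficient of the $\underline{\lambda}$-part to $\frac13$ can increase $|\beta(P)|$ on planes where the two parts have opposite signs. It follows instead from the observation the paper uses: at $x=(r,0,0)$ the form $\epsilon^2\omega_1-\frac13\,dx_2\wedge dx_3$ is a K\"ahler-type form for the metric $\epsilon^2g_{K3}+\frac13\sum dx_i^2\le g_\epsilon$, so it has comass at most $1$ by Wirtinger.
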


				\begin{proof}
				Without loss $p$ is the origin. We first consider the special case where the $G_2$-structure $\phi_\epsilon$ is the product structure on the central K3 fibre times $\R^3$. Let $x_1,x_2,x_3$ be orthonormal coordinates on $\R^3$, so 
				\[
				\phi_\epsilon= \epsilon^2\sum_i \omega_i \wedge dx_i - dx_1dx_2dx_3=  d\beta, \quad \beta=\sum_{cyc}  x_1( \epsilon^2\omega_1- \frac{1}{3} dx_2\wedge dx_3 ). 
				\]
				Now $d_p= (\sum_1^3 x_i^2)^{1/2}$ is the base distance function pulled back to $X$. 	For a.e. $r<r_0$, we can take the slice 
			$
				\langle L, d_p, r\rangle=  \partial (L\lfloor  \pi^{-1}B_g(p, r))
			$
				and Stokes formula gives	\[
				\norm{L}(  \pi^{-1}B_g(p, r)  )= \int_{L\lfloor \pi^{-1}B_g(p, r) } \phi_\epsilon= \langle L, d_p, r\rangle (   \beta     ),
				\]
				while the coarea formula gives
				\[
				\norm{L}(  \pi^{-1}B_g(p, r)  )=  \int_0^r dt \int_{  \text{spt}(L) \cap \pi^{-1}(\partial B(p,t))  } \frac{1}{|\nabla d_p| } d\norm{ \langle L, d_p, t \rangle}.
				\]

				On the other hand,  we claim that restricted to  $\text{spt}(T) \cap \pi^{-1}(\partial B(p,r))$,
				\begin{equation}\label{eqn:primitive1}
				  |\beta |  \leq r \mathcal{H}^2_{g_\epsilon}.
				\end{equation}
				To see this, by rotational symmetry, we may assume $(x_1, x_2, x_3)=(r,0,0)$. Now 
			$
			\epsilon^2	\omega_1- \frac{1}{3} dx_2\wedge dx_3 
			$
			is bounded by the 2-dimensional area element with respect to the metric $\epsilon^2 g_{K3}+ \frac{1}{3}  \sum dx_i^2 $, which is in turn bounded by our product metric $g_\epsilon= \epsilon^2 g_{K3}+ \sum dx_i^2$.

			Now let $F(r)= \norm{L}(  \pi^{-1}B_g(p, r)  )$. For a.e. $r$, the above implies
			\[
			\begin{split}
				\frac{dF}{dr}= & \int_{  \text{spt}(L) \cap \pi^{-1}(\partial B(r))  } \frac{1}{|\nabla d_p| } d\norm{ \langle L, d_p, r\rangle}
				\\
			\geq &  \int_{  \text{spt}(L) \cap \pi^{-1}(\partial B(r))  } ( \frac{1}{|\nabla d_p| } -1) d\norm{ \langle L, d_p , r\rangle} + r^{-1}  \langle L, d_p, r\rangle (  \beta    )
			\\
			=  &  \int_{  \text{spt}(L) \cap \pi^{-1}(\partial B(r))  } ( \frac{1}{|\nabla d_p| } -1) d\norm{ \langle L, d_p, r\rangle} + r^{-1}  F(r).
			\end{split}
			\]
				Notice that $|\nabla d_p|\leq 1$. Thus
				\[
					\frac{d}{ dr }(\frac{F(r) }{r}  ) \geq r^{-1} \int_{  \text{spt}(L) \cap \pi^{-1}(\partial B(r))  } ( \frac{1}{|\nabla d_p|  } -1) d\norm{ \langle L, d_p, r\rangle}  \geq 0,
				\]
				so upon integration
				\begin{equation}
			\begin{split}
					\frac{F(r) }{r}  \vert_{r_1}^{r_2} 	 & \geq  \int_{r_1}^{r_2} \frac{dt}{t}  \int_{  \text{spt}(L) \cap \pi^{-1}(\partial B(t))  } ( \frac{1}{|\nabla d_p | } -1) d\norm{ \langle L, d_p, t \rangle} 
			\\
			& =\int_{    \text{spt}(L) \cap \pi^{-1} (B(r_2)\setminus B(r_1))  }  d_p^{-1} ( 1- |\nabla d_p| ) d\norm{ L} 
		\geq 0.
			\end{split}
				\end{equation}

				The more general case of non-product $G_2$-structure deviates from a product structure by relative metric error of order $O(r)$ on the preimage of $\partial B(0,r)$. The above argument leads instead to 
				\[
				\frac{1}{1-C_0 r}	\frac{dF}{dr} \geq  \int_{  \text{spt}(L) \cap \pi^{-1}(\partial B(p,r))  } ( \frac{1}{|\nabla d_p|  } -1) d\norm{ \langle L, d_p, r\rangle} + r^{-1}  F(r),
				\]
				hence for some constant $C>C_0$, and for $0< r< r_0$ sufficiently small,
				\[
				\begin{split}
						\frac{d}{dr} (e^{Cr} \frac{F(r)}{r})=& \frac{e^{Cr} }{r} ( \frac{dF}{dr} + \frac{C r-1}{r} F   ) \geq \frac{e^{Cr} }{r} ( \frac{dF}{dr} - \frac{1-C_0 r}{r} F   )
						\\
						\geq & \frac{e^{Cr}   (1-C_0r) }{r  }  \int_{  \text{spt}(L) \cap \pi^{-1}(\partial B(p,r))  } ( \frac{1}{|\nabla d_p|  } -1) d\norm{ \langle L, d_p, r\rangle} 
						\\
						\geq  & \frac{1}{r}   \int_{  \text{spt}(L) \cap \pi^{-1}(\partial B(p,r))  } ( \frac{1}{|\nabla d_p|  } -1) d\norm{ \langle L, d_p, r\rangle} ,
				\end{split}
				\]
			which implies the monotonicity formula upon integration.
				\end{proof}

				We now turn to the special Lagrangian case, focusing on the modifications in the arguments.

					\begin{prop}\label{prop:monotonicitySlag}
					(Monotonicity formula) Suppose $ L$ is a special Lagrangian cycle
					of phse $\hat{\theta}= -\frac{\pi}{2}$ inside the almost Calabi-Yau manifold $(X, \omega_\epsilon, \Omega_\epsilon)$, then for any $\epsilon \leq r_1<r_2\leq r_0$, there is some uniform constant $C$ such that
					\[
					\begin{split}
						&	e^{Cr_2}	r_2^{-1} \int_{ \text{spt}(L)\cap \pi^{-1}B_g(p,r_2)} \frac{ d\norm{L}   }{  \sqrt{\det(g)}  }  - e^{Cr_1}	r_1^{-1} \int_{ \text{spt}(L)\cap \pi^{-1}B_g(p,r_1)}\frac{   d\norm{L}     }{  \sqrt{\det(g)}  } 
						\\
						\geq &  
						\int_{    \text{spt}(L) \cap \pi^{-1} (B_g(p, r_2)\setminus B_g(p, r_1)  }  d_p^{-1} ( 1- |\nabla d_p| ) \frac{1}{  \sqrt{\det(g)}  } d\norm{ L}  \geq 0.
					\end{split}
					\]

				\end{prop}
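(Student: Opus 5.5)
The plan is to reduce Proposition \ref{prop:monotonicitySlag} to the argument of Proposition \ref{prop:monotonicityassociative}. Two ingredients are needed: the correct calibrating form with its conformal weight, and a primitive $\beta$ with the analogue of the bound (\ref{eqn:primitive1}).

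\textbf{The weighted mass is the calibrated mass.} A special Lagrangian of phase $\hat\theta=-\frac{\pi}{2}$ is calibrated by $\text{Re}(e^{i\pi/2}\Omega_\epsilon)=-\text{Im}\,\Omega_\epsilon$ with respect to the conformally rescaled metric $\tilde g=e^{-\rho}g_\epsilon$. Since $L$ is $n$-dimensional and $e^{n\rho}=\det(g)$, its $\tilde g$-mass measure is
\[
e^{-n\rho/2}\, d\norm{L}=\det(g)^{-1/2}\, d\norm{L}.
\]
This is exactly the weighted mass appearing in the statement. So I set
\[
F(r)=\int_{\text{spt}(L)\cap\pi^{-1}B_g(p,r)}\frac{d\norm{L}}{\sqrt{\det g}}=\int_{L\lfloor\pi^{-1}B_g(p,r)}\text{Re}(\sqrt{-1}\,\Omega_\epsilon).
\]

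\textbf{Model case: $g$ frozen at $p$.} Take $p=0$ and choose affine coordinates with $g_{ij}(0)=\delta_{ij}$. Then $g_\epsilon=\sum dx_i^2+\epsilon^2 dy_i^2$ and $\Omega_\epsilon=\bigwedge_1^n(\epsilon\, dy_i-\sqrt{-1}\,dx_i)$. Expanding, write $\alpha:=\text{Re}(\sqrt{-1}\Omega_\epsilon)=\sum_{k\text{ odd}}\alpha_k$, where $\alpha_k$ has constant coefficients and contains exactly $k$ factors of $dx$. Each $\alpha_k$ is closed. With $R=\sum x_i\partial_{x_i}$, the cone (homotopy) formula gives the primitive
\[
\beta=\sum_k \tfrac{1}{k}\,\iota_R\alpha_k, \qquad d\beta=\alpha.
\]

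\textbf{Key bound: $|\beta|\le r\,\mathcal{H}^{n-1}_{g_\epsilon}$ on $\pi^{-1}(\partial B(0,r))$.} This is the step needing care, because the weights $1/k$ destroy the direct calibration bound. By rotational symmetry in $x$ (which preserves the model structure), assume $x=(r,0,\dots,0)$, so $\beta=r\sum_k\frac1k\iota_{\partial_{x_1}}\alpha_k$. For $s\in[0,1]$ let $\Omega^{(s)}$ be $\Omega_\epsilon$ with $dx_j$ replaced by $s\,dx_j$ for $j\geq 2$, keeping $dx_1$ unchanged. The part of $\iota_{\partial_{x_1}}\alpha_k$ involving the remaining $k-1$ factors of $dx$ scales by $s^{k-1}$, and $\int_0^1 s^{k-1}ds=\frac1k$. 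Hence
\[
\beta=r\int_0^1\iota_{\partial_{x_1}}\text{Re}(\sqrt{-1}\,\Omega^{(s)})\,ds.
\]
Now $\text{Re}(\sqrt{-1}\,\Omega^{(s)})$ is a special Lagrangian calibration for the flat metric $dx_1^2+s^2\sum_{j\ge2}dx_j^2+\epsilon^2\sum dy_i^2$, and $\partial_{x_1}$ is unit for it. So each contraction has comass $\le1$ for this metric, hence $\le1$ for the larger metric $g_\epsilon$. Averaging over $s$ preserves this, giving $|\beta|\le r$ on $(n-1)$-planes.

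\textbf{Concluding the model case.} With Stokes' formula $F(r)=\langle L,d_p,r\rangle\!\left(e^{-n\rho/2}\text{-weighted }\beta\right)$ and the coarea formula, the computation in the proof of Proposition \ref{prop:monotonicityassociative} carries over verbatim, with every mass carrying the weight $\det(g)^{-1/2}$. It yields $\frac{d}{dr}(F/r)\ge r^{-1}\int(|\nabla d_p|^{-1}-1)\,d\norm{\cdot}\ge0$, and integration gives the model inequality.

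\textbf{General case.} On $\pi^{-1}B_g(p,r)$, the actual $g_{ij}(x)$, $\det(g)$ and $\Omega_\epsilon$ differ from the frozen model by relative error $O(r)$. This is because the structure depends on $x$ only through the smooth $g_{ij}$, uniformly in $\epsilon$, with no $\epsilon$-dependent curvature in the fibre directions. So closedness of $\alpha$ still holds exactly, and $|\beta|\le r(1+C_0r)$ times the weighted area element. As in the associative case this gives
\[
(1-C_0r)^{-1}F'\ge\int\left(|\nabla d_p|^{-1}-1\right)+r^{-1}F,
\]
and $\frac{d}{dr}\left(e^{Cr}F/r\right)$ absorbs the error exactly as before. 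The main obstacle is the comass bound for the weighted primitive $\beta$; the $s$-averaging trick above is my intended resolution.
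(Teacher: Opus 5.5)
Your proposal is correct and is essentially the paper's proof. It uses the same identification of the calibrated mass with $\det(g)^{-1/2}\,d\|L\|$, the same homotopy primitive $\beta=\sum_k \frac{1}{k}\iota_R\alpha_k$, and the same scaling trick for the bound $|\beta|\le r\det(g)^{-1/2}\mathcal{H}^{n-1}_{g_\epsilon}$, followed by the coarea/ODE argument of the associative case. Your variant of the trick, which scales only the transverse $dx_j$ and integrates against $ds$, is just the paper's integrand $s^{-1}\iota_V(\cdot)$ rewritten after rotating the point onto the $x_1$-axis. The only slip is cosmetic: in $g(p)$-orthonormal coordinates $\Omega_\epsilon$ carries the constant factor $\det g(p)^{-1/2}$, which you do restore when you apply Stokes with the weighted $\beta$.
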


				\begin{proof}
			Special Lagrangians of phase $\hat{\theta}=-\frac{\pi}{2}$ are calibrated by the $n$-form
			\[
			\text{Re}(e^{-i\hat{\theta} } \Omega_\epsilon)= \text{Re}(    \sqrt{-1} \bigwedge_1^n( \epsilon dy_i- \sqrt{-1} dx_i     ) ),
		\]
		with respect to the metric $   (\det(g))^{-1/n}   g_\epsilon  $:
		\begin{equation}\label{eqn:specialLagcalibration}
			\text{Re}(e^{-i\hat{\theta} } \Omega_\epsilon) \leq  \frac{1}{  \sqrt{\det(g) } } \mathcal{H}^n.
		\end{equation}
			We focus on the product ambient geometry case, namely $g_{ij}$ does not depend on $x$. By a linear change of coordinates in $GL(n,\R)$, we can find complex coordinates $x_i' + \sqrt{-1} y_i'$ instead of $x_i+ \sqrt{-1} y_i$, such that $dx_i'$ are orthonormal with respect to $g$. Thus
		\[
		\omega_\epsilon= \epsilon \sum dx_i' \wedge dy_i',\quad  g_\epsilon= \epsilon^2 \sum dy_i'^2+ \sum dx_i'^2,\quad \Omega_\epsilon=\frac{1}{  \sqrt{\det(g) } } \bigwedge_1^n( \epsilon dy_i'- \sqrt{-1} dx_i'     ).
		\]

		We define the vector field $V= \sum_1^n x_i' \frac{\partial}{\partial x_i'}=\sum x_i  \frac{\partial}{\partial x_i}$, and find a primitive $(n-1)$-form for $	\text{Re}(e^{-i\hat{\theta} } \Omega_\epsilon)$:
		\begin{equation}
		\beta= \frac{1}{  \sqrt{\det(g) }  } \int_0^1   \text{Re}(    \sqrt{-1} \iota_V \bigwedge_1^n( \epsilon dy_i'- \sqrt{-1} sdx_i'     ) ) \frac{ds}{s},
		\end{equation}
		using that $d\iota_V ( dx_1\wedge \ldots dx_k)= k dx_1\wedge \ldots dx_k$, and that the integrand is polynomial in $s$ (the coefficient of $s^{-1}$ vanishes), we deduce that $d\beta= 	\text{Re}(e^{-i\hat{\theta} } \Omega_\epsilon)$.

		  When we restricted to $\text{spt}(L)\cap \pi^{-1} (\partial B_g(p,r))$,  the $(n-1)$-form 
\[
\text{Re}(    \sqrt{-1} \iota_V \bigwedge_1^n( \epsilon dy_i'- \sqrt{-1} sdx_i'     ) )  
\]
		is bounded by $sr$ times the $(n-1)$-volume form induced by the metric 
		$\epsilon^2\sum dy_i'^2+ s^2 \sum dx_i'^2$, which is dominated by the metric $g_\epsilon=\epsilon^2\sum dy_i'^2+  \sum dx_i'^2$ when $0\leq s\leq 1$. Hence
		\[
	s^{-1}	\text{Re}(    \sqrt{-1} \iota_V \bigwedge_1^n( \epsilon dy_i'- \sqrt{-1} sdx_i'     ) )  \leq r \mathcal{H}^{n-1}_{g_\epsilon} .
		\]
		Upon integration in $s$, we see that when restricted to $\text{spt}(L)\cap \pi^{-1} (\partial B_g(p,r))$,
		\begin{equation}\label{eqn:primitive2}
			  |\beta |  \leq  \frac{r}{    \sqrt{\det(g) }    }\mathcal{H}^{n-1}_{g_\epsilon} .
			\end{equation}
		This is the analogue of (\ref{eqn:primitive1}).
		
		The rest of the argument is basically the same as the associative case, as long as we replace the measure $d\norm{L}$ by $\frac{1}{ \sqrt{\det(g)}  } d\norm{L}$. This modification ultimately reflects the $\det(g)$ factor in (\ref{eqn:specialLagcalibration}).
				\end{proof}

		\begin{cor}\label{cor:volumebound}
		
We assume the special Lagrangian (resp. associative) cycles $L$ satisfy the uniform linear mass bound in Def. \ref{Def:uniformvolumebound}. Then there is a uniform in $\epsilon$ mass upper bound for $\epsilon\leq r<r_0$,
		\[
		\norm{L}( \pi^{-1}B_g(p, r) ) \leq \begin{cases}
			C\epsilon^{n-1}r ,\quad &\text{special Lagrangian case},
			\\
		C	\epsilon^2 r,\quad  & \text{associative case}.
		\end{cases}
		\]
		Furthermore, if $\pi^{-1}(p)$ contains some point in the support of $L$, then there is a uniform mass lower bound for $\epsilon\leq r<r_0$,
		\[
		\norm{L}( \pi^{-1}B_g(p, r) ) \geq \begin{cases}
			C^{-1}\epsilon^{n-1} r ,\quad &\text{special Lagrangian case},
			\\
			C^{-1}	\epsilon^2r ,\quad  & \text{associative case}.
		\end{cases}
		\]
		\end{cor}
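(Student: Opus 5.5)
Both bounds come from the monotonicity formulas, Proposition \ref{prop:monotonicityassociative} and Proposition \ref{prop:monotonicitySlag}. I write $F(r)$ for $\norm{L}(\pi^{-1}B_g(p,r))$ in the associative case. In the special Lagrangian case I write $F(r)$ for $\int_{\text{spt}(L)\cap\pi^{-1}B_g(p,r)} \det(g)^{-1/2}\, d\norm{L}$. Since $\det(g)$ is bounded above and below on compact subsets of $B$, this weighted mass is comparable to the true mass up to a uniform constant. It therefore suffices to bound $F$. Discarding the nonnegative right-hand side of the monotonicity formula gives that $e^{Cr}r^{-1}F(r)$ is nondecreasing on $[\epsilon, r_0]$.

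\textbf{Upper bound.} For $\epsilon\le r< r_0$, monotonicity between $r$ and $r_0$ gives
\[
r^{-1}F(r)\le e^{Cr}r^{-1}F(r)\le e^{Cr_0}r_0^{-1}F(r_0).
\]
The uniform linear volume bound of Def.~\ref{Def:uniformvolumebound} bounds $F(r_0)$ by $C\epsilon^{n-1}$ (resp. $C\epsilon^2$). We may shrink $r_0$ to the constant appearing there, and we may also shrink it so that Propositions \ref{prop:monotonicityassociative} and \ref{prop:monotonicitySlag} apply. Since $r_0$ is a fixed constant, this yields $F(r)\le C'\epsilon^{n-1}r$ (resp. $C'\epsilon^2 r$).

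\textbf{Lower bound.} Monotonicity between $\epsilon$ and $r$ gives $r^{-1}F(r)\ge e^{-Cr_0}\epsilon^{-1}F(\epsilon)$. It therefore suffices to show $F(\epsilon)\ge c\,\epsilon^{n}$ (resp. $c\,\epsilon^3$) whenever some point $q\in\text{spt}(L)$ lies in $\pi^{-1}(p)$. This is the main step, and I would prove it by rescaling the ambient geometry.

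Dilate the metric $g_\epsilon$ by the factor $\epsilon^{-1}$. Near $\pi^{-1}(p)$ the rescaled metric is, up to $O(\epsilon)$ error, a fixed model metric: $T^n\times\R^n$ with unit-size flat torus fibre in the special Lagrangian case, and $K3\times \R^3$ with the hyperk\"ahler metric of the fibre at $p$ in the associative case. In these rescaled metrics the injectivity radius and sectional curvature are bounded uniformly in $\epsilon$. The rescaled current is still calibrated, by the rescaled calibration form, which is closed and of comass one up to the conformal factor $\det(g)^{-1/n}$ in the special Lagrangian case. Calibrated currents are area minimizing, hence stationary with respect to a metric with bounded geometry.

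Apply the standard interior monotonicity formula for stationary integral varifolds on a ball of radius $1$ about $q$ in the rescaled metric. This gives rescaled mass at least $c>0$, with $c$ depending only on the dimension and the bounded geometry; here the density at a support point is at least $1$ by integrality and upper semicontinuity. In the special Lagrangian case the conformal factor $\det(g)^{-1/n}$ varies by only $O(\epsilon)$ over this ball, so the calibration argument still gives almost-minimality and the same conclusion holds. The rescaled unit ball lies inside $\pi^{-1}B_g(p,\epsilon)$, since $\pi$ is $1$-Lipschitz. Undoing the scaling multiplies the $n$-dimensional (resp. $3$-dimensional) mass by $\epsilon^{n}$ (resp. $\epsilon^3$). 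This gives $F(\epsilon)\ge c\,\epsilon^{n}$ (resp. $c\,\epsilon^3$), and hence $F(r)\ge c\,e^{-Cr_0}\epsilon^{n-1}r$ (resp. $c\,e^{-Cr_0}\epsilon^{2}r$).

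The hard step is the uniformity of the rescaled bounded-geometry and almost-minimality claims. I would check these directly from the explicit semi-flat and adiabatic ans\"atze, using that the base is locally approximately a product after rescaling, as already used in the proofs of the two monotonicity propositions.
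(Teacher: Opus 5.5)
Your proposal is correct and follows the paper's proof. The upper bound comes from the monotonicity formula between $r$ and $r_0$ together with the uniform linear volume bound. The lower bound comes from rescaling by $\epsilon^{-1}$ to a geometry that is uniformly bounded at unit scale, applying the standard monotonicity for calibrated integral currents to get $\norm{L}(\pi^{-1}B_g(p,\epsilon))\geq C^{-1}\epsilon^n$ (resp. $C^{-1}\epsilon^3$), and then propagating from scale $\epsilon$ to $r$ with the macroscopic monotonicity formula; your handling of the weighted mass $\det(g)^{-1/2}d\norm{L}$ and the conformal factor in the special Lagrangian case is just a more explicit version of the same argument.
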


			\begin{proof}
			The volume upper bound follows from the monotonicity formula and Def. \ref{Def:uniformvolumebound}. 
			
			For the lower bound, we observe that the rescaled manifolds $(X, \epsilon^{-2} \omega_\epsilon, \epsilon^{-n}\Omega_\epsilon )$ (resp. $(X, \epsilon^{-3}\phi_\epsilon))$ have uniformly bounded geometry at length scale $O(1)$, so the standard monotonicity formula for calibrated integral currents on geodesic balls implies that
			\[
				\norm{L}( \pi^{-1}B_g(p, \epsilon ) ) \geq \begin{cases}
					C^{-1}\epsilon^n  ,\quad &\text{special Lagrangian case},
					\\
					C^{-1}	\epsilon^3   ,\quad  & \text{associative case}.
					\end{cases}
			\]
			This implies the lower bound for $\epsilon\leq r\leq r_0$ by monotonicity.
			\end{proof}

	\begin{rmk}\label{rmk:logconvergencerate}(\emph{$C^0$-perturbation})
	Suppose we replace the closed $G_2$-structure $\phi_\epsilon$ by $\phi_\epsilon'$, such that 
	$\norm{ \phi_\epsilon - \phi_\epsilon'}_{C^0(X, g_\epsilon)} \leq \delta\ll 1$. Up to rescaling the projection map $\pi: X\to B$ by a factor $(1-C\delta)$, we can still ensure $\pi$ is 1-Lipschitz, hence $|\nabla d_p|\leq 1$.
	Let $F(r)= \norm{L}(\pi^{-1} B_g(p,r))$.  A minor adaption of the proof of Prop. \ref{prop:monotonicityassociative} would give
	\[
	\frac{d F(r)}{dr} \geq  (1-C_0 \delta- C_0 r)   \left(     \int_{  \text{spt}(L) \cap \pi^{-1}(\partial B(t))  } ( \frac{1}{|\nabla d_p| } -1) d\norm{ \langle L, d_p, r\rangle} + r^{-1}  F(r)  \right),
	\]
	hence for some large enough constant $C$, and for $0<r<r_0<1$ sufficiently small,
	\[
	\frac{d}{dr} ( e^{Cr}  \frac{F(r) }{ r^{1-C_0 \delta} }     ) \geq 0.
	\]
	For $\epsilon< r< r_0<1$, the argument in Cor. \ref{cor:volumebound} would give the mass lower bound
	\[
	r^{-1 }    \norm{L}(\pi^{-1} B_g(p,r)) \geq C^{-1}  (\epsilon/r)^{  C_0\delta } \epsilon^2.
	\]
	If we assume that $\delta= O(  \frac{1}{|\log \epsilon|}    )$ as $\epsilon\to 0$, then the factor $ (\epsilon/r)^{  C_0\delta }$ would remain bounded below, so Cor. \ref{cor:volumebound} would still hold.
	\end{rmk}

\section{Limiting $\Lambda$-weighted gradient cycle}\label{sect:limitgradientcycle}

				\subsection{Limiting measure and limiting current}

					Suppose  the special Lagrangian (resp. associative) cycles $L_i$ satisfy the \emph{uniform linear mass bound} as $\epsilon\to 0$ (\cf Def. \ref{Def:uniformvolumebound}). 
				The uniform mass bound in Cor. \ref{cor:volumebound} allows us to extract a subsequential limiting measure with rectifiable support. We shall suppress the indices in diagonal subsequence arguments.

				\begin{cor}(\emph{Limiting measure})\label{cor:limitingmeasure}
			After passing to a subsequence,  the pushforward measures $\epsilon^{1-n}\pi_* \norm{L_i}$ (resp. $\epsilon^{-2} \pi_* \norm{L_i} $) on $B$ converges weakly to some measure $\mu$, such that for $0< r<r_0$, 
					\begin{equation}
						C^{-1} \leq \frac{ \mu(B_g(p,r))}{ r } \leq C,\quad \forall p\in \text{spt}(\mu), B_g(p,r)\subset B.
					\end{equation}
					In particular $\Gamma= \text{spt}(\mu)$ is \emph{rectifiable} by Preiss's theorem.
					
				\end{cor}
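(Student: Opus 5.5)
The plan is to combine weak compactness of Radon measures with the two-sided density bounds of Corollary \ref{cor:volumebound}, and then to invoke Preiss's theorem.

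\textbf{Extracting a limit.} Write $\mu_i:=\epsilon^{1-n}\pi_*\norm{L_i}$ (resp. $\epsilon^{-2}\pi_*\norm{L_i}$). Cover a compact $K\subset B$ by finitely many balls $B_g(p,r_0)$. The uniform linear mass bound of Def. \ref{Def:uniformvolumebound} then gives $\sup_i \mu_i(K)<\infty$. Exhausting $B$ by compacts and taking a diagonal subsequence, weak-$*$ compactness of Radon measures yields a subsequence with $\mu_i\rightharpoonup\mu$.

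\textbf{Upper density bound.} Corollary \ref{cor:volumebound} gives $\mu_i(B_g(q,r))\le Cr$ for $\epsilon_i\le r<r_0$ and $q$ in a compact set. Fix $p$ and $r$. Lower semicontinuity on open sets gives
\[
\mu(B_g(p,r))\le\liminf_i \mu_i(B_g(p,r))\le Cr,
\]
since $\epsilon_i<r$ for $i$ large.

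\textbf{Lower density bound.} Take $p\in\mathrm{spt}(\mu)$ and $0<r<r_0$. Since $p\in\mathrm{spt}(\mu)$, we have $\mu(B_g(p,r/2))>0$, so $\liminf_i\mu_i(B_g(p,r/2))>0$. Hence for $i$ large the support of $L_i$ meets $\pi^{-1}(B_g(p,r/2))$. Choose $p_i$ in $B_g(p,r/2)$ with $\pi^{-1}(p_i)\cap\mathrm{spt}(L_i)\ne\emptyset$, and pass to a subsequence with $p_i\to p'$, where $|p'-p|\le r/2$.

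By the lower bound in Corollary \ref{cor:volumebound}, for any fixed $s$ with $\epsilon_i\le s$,
\[
\mu_i(B_g(p_i,s))\ge C^{-1}s .
\]
Take $s=r/4$. For $i$ large, $B_g(p_i,r/4)\subset \bar B_g(p',r/3)$. Upper semicontinuity on compact sets then gives
\[
\mu(\bar B_g(p',r/3))\ge \limsup_i \mu_i(\bar B_g(p',r/3))\ge C^{-1}r/4 .
\]
Choosing radii carefully, we have $\bar B_g(p',r/3)\subset B_g(p,r)$ whenever $|p-p'|<2r/3$, which holds. So $\mu(B_g(p,r))\ge C^{-1}r$ after enlarging $C$. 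The constants are uniform for $p$ in compact sets because those of Corollary \ref{cor:volumebound} are. The metric $g$ is only comparable to Euclidean, but all of this is local with $g$ smooth, so that comparison is harmless.

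\textbf{Rectifiability.} The two-sided bound implies that the upper and lower $1$-densities of $\mu$ are positive and finite at every point of $\mathrm{spt}(\mu)$. Hence $\mu$ is mutually absolutely continuous with $\mathcal{H}^1\lfloor\Gamma$, with bounded density. The remaining step is the rectifiability of $\Gamma$. Preiss's theorem requires that $\lim_{r\to0}\mu(B(p,r))/r$ exist $\mu$-a.e., and mere two-sided bounds do not give this. For a $1$-dimensional set, however, this is not needed.

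Our bounds give $\mathcal{H}^1(\Gamma\cap K)<\infty$ and lower density $\Theta^1_*(\mathcal{H}^1\lfloor\Gamma,p)\ge c>0$ on $\Gamma$. Besicovitch's $1$-dimensional theory (purely unrectifiable sets have lower density at most $3/4$, and the density-$1$ results, which Preiss extends to general dimension) does not directly apply with only a positive lower density. Instead I would use that $\Gamma$ is connected-like at scales. The lower bound holds for all $r$, and each $L_i$ is a cycle whose projection mass spreads over every ball, so I expect one can show that $\Gamma\cap\bar B(p,r)$ contains a continuum of diameter comparable to $r$. David–Semmes / Besicovitch–Federer then gives rectifiability.

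The cleanest route I would try first follows the paper's phrasing and is to appeal to the monotonicity formula directly. Propositions \ref{prop:monotonicityassociative} and \ref{prop:monotonicitySlag} pass to the limit and show that $e^{Cr}\mu(B(p,r))/r$, with the weight $\det(g)^{-1/2}$ in the special Lagrangian case, is monotone in $r$. The density $\lim_{r\to0}\mu(B(p,r))/2r$ therefore exists everywhere on $\Gamma$, and Preiss's theorem applies. Verifying that the monotonicity passes to the weak limit is the main point to check. It does, since the monotone quantity is a limit of monotone functions and $\mu(\partial B)=0$ for all but countably many $r$.
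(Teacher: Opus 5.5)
Your proposal is correct and follows the paper's route. You get weak-$*$ compactness from the uniform linear mass bound, and you transfer the two-sided ratio bounds from Corollary \ref{cor:volumebound} to $\mu$ by semicontinuity on open and compact sets. You then apply Preiss's theorem, with the needed existence of $\lim_{r\to 0}\mu(B_g(p,r))/2r$ obtained by passing the monotonicity formulas of Propositions \ref{prop:monotonicityassociative} and \ref{prop:monotonicitySlag} to the limit; this is exactly how the paper asserts that $\Theta_\mu$ is well defined. You are right that the two-sided bounds alone do not license Preiss's theorem, so the speculative continuum argument is unnecessary once you adopt the monotonicity route you settle on.
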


			Thus the measure $\mu$ admits the integral representation
			\begin{equation}
				\int f d\mu= \int_\Gamma f (x) \Theta_\mu(x) d\mathcal{H}^1(x), \quad \forall f\in C^0_{cpt}(B).
				\end{equation}
				By the monotonicity formula in Prop. \ref{prop:monotonicitySlag}, \ref{prop:monotonicityassociative}, the density at any $x\in \Gamma$
				\[
				\Theta_\mu(x)= \lim_{r\to 0} \frac{1}{2r} \mu( B(x,r) ) ,
				\]
				is well defined and locally bounded from above and below.

				\begin{cor}\label{cor:Hausdorffdist}
				On any open subset $U$ properly contained in $B$, the support of $\pi_* \norm{L_i}$ converges in the Hausdorff distance to $\Gamma= \text{spt}(\mu)$.
				\end{cor}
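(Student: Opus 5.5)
The claim has two halves, and I will prove both from the two-sided mass bounds: Corollary \ref{cor:volumebound} at the level of $L_i$, and Corollary \ref{cor:limitingmeasure} for the limit $\mu$. Write $S_i=\pi(\text{spt}(L_i))\cap U$. Hausdorff convergence $S_i\to\Gamma\cap U$ on $U$ (compactly contained in $B$, so the local constants $r_0,C$ are uniform on a neighbourhood of $\bar U$) means two things. First, every point of $\Gamma\cap U$ is close to $S_i$ for large $i$. Second, every point of $S_i$ is close to $\Gamma$ for large $i$, uniformly over the points.

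\emph{Step 1: $\Gamma$ lies near $S_i$.} Take $p\in\Gamma\cap U$ and $\eta>0$. Since $p\in\text{spt}(\mu)$, Corollary \ref{cor:limitingmeasure} gives $\mu(B_g(p,\eta/2))\ge C^{-1}\eta/2>0$. By weak convergence, for large $i$ the rescaled pushforward measures give positive mass to $B_g(p,\eta)$, so $S_i$ meets $B_g(p,\eta)$. To make this uniform in $p$, cover the compact set $\overline{\Gamma\cap U'}$ by finitely many such balls, where $U'$ is slightly larger than $U$. Points of $\Gamma$ near $\partial U$ need only be approximated from within $U'$. I will handle this by working on a slightly shrunken $U$, or equivalently by interpreting Hausdorff convergence on compact subsets.

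\emph{Step 2: $S_i$ lies near $\Gamma$.} This is the half that needs the lower mass bound. Argue by contradiction. Suppose there are $\eta>0$, a subsequence, and points $q_i\in S_i$ with $\text{dist}_g(q_i,\Gamma)\ge\eta$. Pass to a further subsequence so that $q_i\to q\in\bar U$. Since $q_i\in\pi(\text{spt}(L_i))$, the lower bound in Corollary \ref{cor:volumebound}, applied at scale $r=\eta/4\ge\epsilon$, gives
\[
\epsilon^{1-n}\norm{L_i}(\pi^{-1}B_g(q_i,\eta/4))\ge C^{-1}\eta/4
\]
(resp. with $\epsilon^{-2}$). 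For large $i$ we have $B_g(q_i,\eta/4)\subset B_g(q,\eta/2)$. Test the weak convergence against a cutoff function equal to $1$ on $B_g(q,\eta/2)$ and supported in $B_g(q,3\eta/4)$. This gives $\mu(\overline{B_g(q,3\eta/4)})\ge C^{-1}\eta/4>0$. But $\text{dist}(q,\Gamma)\ge\eta$, so this closed ball misses $\text{spt}(\mu)$, which is a contradiction.

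The main point needing care is the uniformity of the constant in the lower bound of Corollary \ref{cor:volumebound}. That bound must hold for every point $p$ of the projected support in a compact neighbourhood of $\bar U$, not just at a fixed $p$. It does, because the bounded-geometry monotonicity at scale $\epsilon$ and the base monotonicity formula (Propositions \ref{prop:monotonicityassociative}, \ref{prop:monotonicitySlag}) have constants depending only on the compact set. A minor point is that $q_i$ must lie in the support of $L_i$ upstairs; this holds by definition, since the projection of the support is what is being measured.
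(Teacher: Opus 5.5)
Your proposal is correct and is the paper's argument: the paper's proof just cites the weak convergence defining $\mu$ and the mass lower bound of Corollary~\ref{cor:volumebound}, which are exactly the ingredients of your Steps 1 and 2. You have filled in the details the paper leaves implicit: the finite-net uniformity, the contradiction via a cutoff function, and the uniformity of the constants over a compact neighbourhood of $\bar U$.
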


				\begin{proof}
				This follows from the definiton of the limiting measure $\mu$, and the mass lower bound in Cor. \ref{cor:volumebound}.
				\end{proof}

				Next, for any given compact set $K\subset B$, we consider the vector space of all smooth $n$-forms (resp. $3$-forms) $\alpha$  which can be written as 
		$
				\alpha=\sum \alpha_i\wedge dx_i
			$
				for smooth $(n-1)$-forms  (resp. $2$-forms) $\alpha_i$ supported on $K$. 	We observe that 
				\[
				|\alpha| \leq  C\sum_i \norm{\alpha_i}_{C^0(g_1)} \epsilon^{1-n }   \mathcal{H}^n_{g_\epsilon} ,\quad \text{resp. } |\alpha | \leq  C \sum_i \norm{\alpha_i}_{C^0(g_1)}\epsilon^{-2 }\mathcal{H}^3_{g_\epsilon}
				\]
			where $g_1$ is a fixed reference metric independent of $\epsilon$, and the constants $C$ are uniform for small $\epsilon$ and for $\alpha$. Thus the sequence of special Lagrangian (resp. associative) cycles $L_i$ satisfy the uniform bounds
			\[
		|	\int_{L_i} \alpha | \leq C\sum_j \norm{\alpha_j}_{C^0(g_1)} \epsilon^{1-n }   \norm{L_i}(K) ,
			\]
				resp.
				\[
					|	\int_{L_i} \alpha | \leq C\sum_j \norm{\alpha_j}_{C^0(g_1)} \epsilon^{-2 }   \norm{L_i}(K).
				\]
				We can view $L_i$ as bounded functionals on the space of all such forms $\alpha$, and take a subsequential weak limit $L_\infty$, so that 
				\begin{equation}\label{eqn:Linftybounded}
					|	\int_{L_\infty} \alpha | \leq C\sum_j \norm{\alpha_j}_{C^0(g_1)} \mu(\text{spt} (\alpha_j)),
				\end{equation}
				where $\mu$ is the limiting measure in Cor. \ref{cor:limitingmeasure}. We can informally think of $L_\infty$ as a current, with the caveat that we only allow test forms $\alpha$ with at least one base factor $dx_i$.

		\begin{lem}\label{lem:limitcurrent1}
		Suppose $\alpha= \sum \alpha_{ij} \wedge dx_i\wedge dx_j$ for some $(n-2)$-form (resp. 1-form) $\alpha_{ij}$ compactly supported in $B$, then $\int_{L_\infty} \alpha=0$.
		\end{lem}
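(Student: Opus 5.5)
The plan is a scaling argument: a form with two base factors pays one more factor of base-area cost relative to $\epsilon$-scaled fibre directions, and so gains an extra factor of $\epsilon$ when compared with the calibration form. I will show that for each $i$,
\[
|\alpha|_{L_i}| \leq C \sum_{j<k} \norm{\alpha_{jk}}_{C^0(g_1)}\, \epsilon^{2-n}\, \mathcal{H}^n_{g_\epsilon}
\]
(resp. $\leq C \sum \norm{\alpha_{jk}}_{C^0(g_1)}\, \epsilon^{-1}\mathcal{H}^3_{g_\epsilon}$). Combining this with the uniform mass bound $\norm{L_i}(K)\leq C\epsilon^{n-1}$ (resp. $C\epsilon^2$) from Cor. \ref{cor:volumebound} gives $|\int_{L_i}\alpha| \leq C\epsilon \to 0$. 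Since $L_\infty$ is the weak limit of $L_i$ on this class of test forms, $\int_{L_\infty}\alpha=0$.

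\textbf{Pointwise comeasure estimate.} Work in a $g_\epsilon$-orthonormal coframe adapted to the fibration: base covectors $dx_i'$ of unit length, and fibre covectors $\epsilon\, dy_i'$ (resp. $\epsilon\, e_a$ with $e_a$ orthonormal for $g_{K3}$). Up to the $O(\epsilon)$ non-product errors, which are harmless here, $g_\epsilon$ is the product metric. An $(n-2)$-form $\alpha_{jk}$ with $C^0(g_1)$-norm bounded has comass at most $C\norm{\alpha_{jk}}\epsilon^{-(n-2)}$ with respect to $g_\epsilon$. The worst case is when all of its legs are fibre directions: each fibre unit covector has $g_1$-size of order $\epsilon^{-1}$ after normalising, so a fibre $(n-2)$-form picks up $\epsilon^{2-n}$. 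Wedging with $dx_j\wedge dx_k$, which has $g_\epsilon$-comass $O(1)$, then gives comass $\leq C\epsilon^{2-n}$ for $\alpha$.

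In the associative case, a 1-form $\alpha_{jk}$ has $g_\epsilon$-comass $\leq C\epsilon^{-1}$, so $\alpha$ has comass $\leq C\epsilon^{-1}$. These estimates are the same computation that gave the bound $|\alpha|\leq C\epsilon^{1-n}\mathcal{H}^n$ (resp. $C\epsilon^{-2}\mathcal{H}^3$) stated above, with one fewer fibre leg.

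\textbf{Integrate and pass to the limit.} Since $L_i$ is an integral current with $\mathrm{spt}$ meeting $\pi^{-1}(K)$, we get
\[
\Big|\int_{L_i}\alpha\Big|\leq C\epsilon^{2-n}\norm{L_i}(\pi^{-1}K)\leq C'\epsilon,
\]
by covering $K$ with finitely many balls $B_g(p,r_0)$ and applying the uniform linear mass bound (Def. \ref{Def:uniformvolumebound}) or Cor. \ref{cor:volumebound}. The associative case gives $C\epsilon^{-1}\cdot\epsilon^2=C\epsilon$ in the same way. Letting $i\to\infty$ gives the lemma.

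The main obstacle is making the comass bound uniform in the non-product situation. The fibre coframe depends on the base point, and the horizontal distribution $\mathcal{H}$ (in the associative case) mixes base and fibre directions, so "$dx_j$" and the coordinate legs of $\alpha_{jk}$ are only approximately orthogonal. I expect to handle this by noting that on the compact set $K$ the metrics $g_\epsilon$ are uniformly equivalent to the model $\sum dx_i^2 + \epsilon^2 g_{\mathrm{fibre}}$. The $dx_i$ are pulled back from the base, so they are exactly annihilated by vertical vectors and have bounded $g_\epsilon$-norm. The pointwise counting of $\epsilon$-powers therefore survives with constants depending only on $K$.
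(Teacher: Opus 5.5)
Your proposal is correct and follows the paper's proof. The paper also uses the pointwise comass bound $|\alpha|\leq C\epsilon^{2-n}\mathcal{H}^n_{g_\epsilon}$ (resp.\ $C\epsilon^{-1}\mathcal{H}^3_{g_\epsilon}$), which gains one factor of $\epsilon$ over the bound defining $L_\infty$, so with the linear mass bound $\int_{L_i}\alpha=O(\epsilon)\to 0$.
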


		\begin{proof}
		This follows by taking the limit of the inequality
		\[
			|\alpha| \leq  C\sum_{i,j}\norm{\alpha_{ij}}_{C^0(g_1)} \epsilon^{2-n }   \mathcal{H}^n_{g_\epsilon} ,\quad \text{resp. } |\alpha| \leq  C \sum_{i,j} \norm{\alpha_{ij}}_{C^0(g_1)}\epsilon^{-1 }\mathcal{H}^3_{g_\epsilon},
		\]
		which allows us to gain one more $\epsilon$ factor.
		\end{proof}

		\begin{lem}\label{lem:limitcurrent2}
			Suppose $\alpha= \sum d\beta_i\wedge dx_i$ for some $(n-2)$-form (resp. 1-form) $\beta_i$ compactly supported in $B$, then $\int_{L_\infty} \alpha=0$.
		\end{lem}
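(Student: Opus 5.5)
Since $L_i$ are cycles, I would use Stokes' theorem to trade $d\beta_i\wedge dx_i$ for an exact form plus an error that Lemma \ref{lem:limitcurrent1} already kills. Concretely, write
\[
d(\beta_i\wedge dx_i)= d\beta_i\wedge dx_i,
\]
which holds because $d(dx_i)=0$. Hence
\[
\alpha=\sum_i d\beta_i\wedge dx_i = d\Big(\sum_i \beta_i\wedge dx_i\Big)
\]
is exact, with a compactly supported primitive. Since each $L_i$ is a cycle ($\partial L_i=0$ in the relevant region, as integral currents without boundary), Stokes gives
\[
\int_{L_i}\alpha = \partial L_i\Big(\sum_j \beta_j\wedge dx_j\Big)=0
\]
for every $i$. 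Passing to the weak limit along the subsequence defining $L_\infty$, I obtain $\int_{L_\infty}\alpha=0$.

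There is one subtlety: the forms $\beta_i$ may have components with base legs $dx_k$, so $d\beta_i$ mixes fibre and base derivatives. This does not matter for the argument above, because exactness is used directly at the level of each $L_i$, before any limit is taken.

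The only check needed is that $\alpha$ belongs to the class of admissible test forms on which $L_\infty$ is defined, namely forms of the shape $\sum\alpha_i\wedge dx_i$ with compact support in $K$. This holds with $\alpha_i=d\beta_i$. The convergence $\int_{L_i}\alpha\to\int_{L_\infty}\alpha$ is then just the definition of the subsequential weak limit, given the uniform bounds preceding (\ref{eqn:Linftybounded}).

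If instead the intended meaning is that only the fibrewise differential $d_F\beta_i$ appears, then
\[
d\beta_i\wedge dx_i - d_F\beta_i\wedge dx_i
\]
consists of terms with at least two base factors $dx_j\wedge dx_i$. Those terms integrate to zero against $L_\infty$ by Lemma \ref{lem:limitcurrent1}, so the conclusion is unchanged. I expect no real obstacle here; the main point is keeping track of this decomposition and the compact support of the primitive.
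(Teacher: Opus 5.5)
Your proof is correct and is essentially the paper's argument: since $d(\beta_i\wedge dx_i)=d\beta_i\wedge dx_i$, the form $\alpha=d\bigl(\sum_i\beta_i\wedge dx_i\bigr)$ has a compactly supported primitive, so Stokes' formula gives $\int_{L_i}\alpha=0$ for every $i$ because $\partial L_i=0$, and this passes to the weak limit $L_\infty$. Your closing remark about a fibrewise differential is unnecessary, since the lemma and its later use both take $d$ to be the full exterior derivative.
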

		
		\begin{proof}
		This follows by taking the limit of the Stokes formula
		\[
		\int_{L_i} \alpha= \int_{\partial L_i}  \beta_i\wedge dx_i=0.
		\]
		\end{proof}

		Suppose that $\alpha= \sum \alpha_i \wedge dx_i$, where $\alpha_i$ is a smooth $(n-1)$-form (resp. $2$-form) whose restriction to each fibre of $\pi$ is closed. Then $\alpha$ induces a 1-form $[\alpha]$ valued in $H^{n-1}(T^n,\R)$ (resp. $H^2(K3, \R)$). If $\alpha_i$ is fibrewise exact, then we can find a smooth $(n-2)$-form (resp. $1$-form) $\beta_i$, such that $\alpha_i- d\beta_i$ vanishes on each fibre, hence 
		\[
		\alpha_i- d\beta_i= \sum_j \alpha_{ij} \wedge dx_j
		\]
		for suitable forms $\alpha_{ij}$. Then by Lemma \ref{lem:limitcurrent1}, \ref{lem:limitcurrent2},
		\[
		\int_{L_\infty} \alpha= \int_{L_\infty}  \sum dx_i\wedge d\beta_i + \sum dx_i\wedge \alpha_{ij} \wedge dx_j=0.
		\]

	The upshot is that for $\alpha= \sum \alpha_i \wedge dx_i$ with $d\alpha_i=0$ on each fibre, the integral $\int_{L_\infty} \alpha$ depends only on $[\alpha]$, and we denote this as $\langle T, [\alpha]\rangle$. But by (\ref{eqn:Linftybounded}) applied to $\alpha_i= \iota_{\frac{\partial}{\partial x_i}  }\alpha$, we have
	\[
	| \langle T, [\alpha]\rangle   |\leq   C \norm{ \alpha }_{C^0}  \mu(\text{spt}(\alpha)   ),
	\] 
	so $T$ defines an $H_{n-1}(T^n, \R)$ (resp. $H_2(K3,\R)$)-valued 1-current on $B$, with locally finite mass dominated by $C\mu$. By Lemma \ref{lem:limitcurrent2}, its boundary $\partial T=0$. By Cor. \ref{cor:limitingmeasure}, the support $\Gamma= \text{spt}(\mu)$ is 1-rectifiable, and $\mu\leq C \mathcal{H}^1\lfloor_\Gamma$ holds inside any given compact subset of $B$.

	Thus $T$ admits the \emph{integral representation}
			\begin{equation}\label{eqn:integralrepresentationT}
			\langle T, [\alpha]\rangle = \int_\Gamma  \langle [\alpha](x), \delta(x)\otimes  \xi(x)\rangle  d\mathcal{H}^1(x),
		\end{equation}
		where 
		$\xi(x)$ is a $g$-unit vector spanning the approximate tangent space $T_x \Gamma$ 
		for $\mathcal{H}^1$-a.e. $x\in \Gamma$,  and $\delta: \Gamma\to H_{n-1}(T^n,\R)$ (resp. $   \delta: \Gamma\to H_2(K3,\R)    $) is a \emph{locally bounded} function. The $H^{n-1}(T^n,\R)$ (resp. $H^2(K3, \R)$) part of $[\alpha]$ pairs with $\delta(x)$, while the 1-form part of $[\alpha]$ pairs with $\xi(x)$.

		Alternatively, we can pair $T$ with compactly supported 1-forms, and output a vector in $H_{n-1}(T^n,\R)$ (resp. $H_2(K3, \R)$).  To identify $T$ as a $\Lambda$-valued rectifiable current in Def. \ref{Def:gradientcycle}, it remains to show that $\delta$ is valued in the integral lattice $\Lambda$.

		\subsection{Local structure of the limiting current}

		We now analyze the local structure of $T$.
		Let $p\in \Gamma= \text{spt}(\mu)  \subset B$ be any point such that the approximate tangent space $T_p \Gamma$ is unique, and $p$ is a Lebesgue point of $\xi(x), \delta(x), \Theta_\mu(x)$, namely
		\[
		\lim_{r\to 0} r^{-1} \int_\Gamma |\xi(x)- \xi(p)|  +  |\delta(x)- \delta(p)|  + |\Theta_\mu(x)-\Theta_\mu(p)|  d\mathcal{H}^1=0.
		\]
		This holds at $\mathcal{H}^1$-a.e. point $p\in \Gamma$.

		\begin{lem}\label{lem:integrality}
		The class $\delta(p)$ lies in the integral lattice $\Lambda= H_{n-1}(T^n,\Z)$ (resp. $H_2(K3,\Z))$.
		\end{lem}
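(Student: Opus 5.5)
We prove integrality by slicing. The idea is to realise $\delta(p)$ as the limit of homology classes of integral fibre-slices of $L_i$; these classes lie in $\Lambda$, and a discrete set is closed under limits.

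Fix $p$ as above and choose $g$-orthonormal coordinates at $p$ with $\xi(p)=\partial/\partial x_1$. Let $h=x_1-x_1(p)$, pulled back to $X$. For a.e. $t$, the slice $\langle L_i, h, t\rangle$ is an integral $(n-1)$-cycle (resp. 2-cycle) inside the slab $\pi^{-1}(\{h=t\})$. Since $\Gamma$ has a unique approximate tangent line at $p$ and spt$(\mu)$ is the Hausdorff limit of $\pi(\text{spt}(L_i))$ (Cor. \ref{cor:Hausdorffdist}), we get the following. For small $r$ and large $i$, the part of $L_i$ over $\{|h|<r\}\cap B_g(p,2r)$ lies over a thin cone $\{|x'|\leq \eta r\}$ around the $x_1$-axis, where $x'=(x_2,\dots,x_n)$ and $\eta\to 0$ as $r\to 0$. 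I also need that no mass escapes through the sides of the cylinder. The upper mass bound and the Hausdorff convergence give this: away from $\Gamma$ there is eventually no support at all. For such $t$, set $S_{i,t}=\langle L_i\lfloor \pi^{-1}\{|x'|<2\eta r\}, h, t\rangle$. This is a cycle supported over the small disc $D_t=\{h=t,|x'|<2\eta r\}$. Since $\pi^{-1}(D_t)$ retracts onto a fibre, $S_{i,t}$ has a well-defined class $[S_{i,t}]\in H_{n-1}(T^n,\Z)$ (resp. $H_2(K3,\Z)$), that is, in $\Lambda$.

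Next we compute this class by testing against forms. Take $\alpha=\chi(x')\psi(h)\,\theta\wedge dh$. Here $\theta$ is a fibrewise-closed form representing a fixed class $c\in H^{n-1}(T^n,\R)$ (resp. $H^2(K3,\R)$), extended flatly in the local trivialisation. The cutoff $\chi$ equals $1$ on the thin cone, and $\psi\in C^\infty_c(-r,r)$ has $\int\psi=1$. By the coarea formula for slices,
\[
\int_{L_i}\alpha=\int \psi(t)\,\langle c,[S_{i,t}]\rangle\, dt ,
\]
because $\chi=1$ on the support and $\theta$ is closed on the fibres. Here $[S_{i,t}]$ is integral and locally constant in $t$ wherever the slices are defined. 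Indeed, $L_i$ is a cycle, so $S_{i,t}-S_{i,t'}$ is the boundary of $L_i$ restricted to the slab between $t$ and $t'$. This uses again that no support meets the lateral boundary $\{|x'|=2\eta r\}$. Hence $[S_{i,t}]=\lambda_i\in\Lambda$ is independent of $t\in(-r,r)$, and so $\int_{L_i}\alpha=\langle c,\lambda_i\rangle$.

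Taking $i\to\infty$ and using Theorem-style convergence $\int_{L_i}\alpha\to\langle T,[\alpha]\rangle$ (the construction of $T$ via $L_\infty$), we obtain
\[
\langle c,\lambda_i\rangle\to\int_\Gamma \chi\psi\,\langle c,\delta(x)\rangle\,\langle dh,\xi(x)\rangle\, d\mathcal{H}^1 .
\]
The uniform mass bound (Cor. \ref{cor:volumebound}) bounds $|\lambda_i|$, so a subsequence of $\lambda_i$ is eventually constant, equal to some $\lambda\in\Lambda$. Now let $r\to 0$, rescaling $\psi=r^{-1}\psi_0(\cdot/r)$. At the Lebesgue point $p$, the density $\Theta_\mu$ and the approximate tangent line force $\Gamma\cap\{|h|<r\}$ to be, up to $o(r)$ in $\mathcal{H}^1$-measure, a single graph over the $x_1$-axis with $\xi\approx\partial_{x_1}$ and $\delta\approx\delta(p)$. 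So the right side tends to $\langle c,\delta(p)\rangle$. This holds for every $c$, and $\lambda=\lambda(r)$ lies in the discrete lattice with $\lambda(r)\to\delta(p)$, so $\delta(p)\in\Lambda$.

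I expect the main obstacle to be the localisation step. We must ensure that the slice of $L_i$ near $p$ really is a cycle over a contractible disc, with no leakage through the sides. We must also ensure that $\Gamma$ near $p$ is a single sheet over the $x_1$-axis, rather than several sheets of small total measure crossing $\{h=t\}$. The Lebesgue-point hypothesis only controls the measure, so stray pieces of $\Gamma$ with $o(r)$ mass could still cross the disc boundary. My first remedy is to choose the radius $2\eta r$ by averaging: by a coarea/Fubini argument, pick a good lateral radius on which $L_i$ has small slice mass. If leakage cannot be excluded entirely, I would accept an error class of small mass. I would then show, using the lower mass bound from Cor. \ref{cor:volumebound} for any support actually present, that any nonzero integral class carries mass at least a definite multiple of $\epsilon^{n-1}$ (resp. $\epsilon^2$), which would contradict smallness.
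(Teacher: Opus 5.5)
Your proposal follows the paper's proof: you slice $L_i$ over a thin tube around the approximate tangent line, note that the slice classes are integral and independent of $t$, test against $\theta\wedge dh$, pass to the limit through $L_\infty$, and use the Lebesgue point to identify the limit class with $\delta(p)$; replacing the paper's sharp cylinder $C_{p,r}$ with your smooth cutoff $\psi$ is a harmless and slightly cleaner variant. The localisation worry you raise at the end is settled, as in the paper, by the lower mass-ratio bound $\mu(B_g(x,s))\geq C^{-1}s$ for $x\in\Gamma$ from Cor.~\ref{cor:limitingmeasure}: a point of $\Gamma$ at distance $\geq\eta r$ from the axis would force $\mu$-mass at least $C^{-1}\eta r$ off the axis, contradicting the $o(r)$ bound from the approximate tangent line, so you need neither the averaging argument nor the small-error-class fallback.
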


		\begin{proof}
			We focus on the associative case, as the special Lagrangian case is almost verbatim.
			
		Let $0<\gamma \ll 1$ be any given small number. Let $v= \xi(p)$ denote the unit tangent vector to $ \Gamma $. For all sufficiently small $0<r\ll r_0$, using the uniform mass ratio bounds in Cor. \ref{cor:limitingmeasure}, we can ensure
		\[
		\Gamma\cap B_g(p, 10r) \subset \{   x: \text{dist}_g(x, p+ \R v) < \gamma r        \}.
		\]
		By construction $\mu$ is the weak limit of the pushforward measure $\epsilon^{-2} \norm{L_i}$ as $i\to +\infty$. Using the mass lower bound in Cor. \ref{cor:volumebound}, for sufficiently large $i$ depending on $r$, we have 
		\begin{equation}\label{eqn:suppLiclosetotangentline}
		\text{spt}( \pi_* \norm{L_i}  ) \cap B_g(p, 9r) \subset \{   x: \text{dist}_g(x, p+ \R v) <2 \gamma r        \}.
		\end{equation}
		Let $\pi_{p,v}: B_g(p, 9r)\to p+ \R v\simeq \R$ be the $g$-nearest point projection map; as $r$ is very small, this is well approximated by a Euclidean projection map. We consider the cylindrical subregion
		\[
		C_{p,r}=\{      x\in B_g(p, 9r): \text{dist}_g(  x, p+\R v  ) < r,  \quad \pi_{p,v}(x)=p+tv, \quad |t|< r        \},
		\]
		so that $\pi_{p,v}$ exhibits $C_{p,r}$ as a disc bundle over an interval.

		We restrict the associative cycles $L_i$ to $C_{p,r}$, and take the current slices by the projection $\pi_{p,v}$:
		\[
		L_{i,t}:=  \langle L_i \lfloor  C_{p,r},  \pi_{p,v}, t\rangle ,\quad |t|<r.
		\]
		By (\ref{eqn:suppLiclosetotangentline}), the support of $L_{i,t}$ is contained in 
		\[
		\{      \pi_{p,v}(x)=p+ tv, \quad      \text{dist}_g(x, p+ \R v) <2 \gamma r           \}.
		\]
		Since $\gamma\ll 1$, this ensures $L_{i,t}$ has \emph{no boundary}, so defines a class  $\delta_r \in H_2(K3,\Z)$. This class is independent of $t$ for $|t|<r$.

		On the other hand, by the \emph{slicing theory} of integral currents,
		\[
		(L_i \lfloor  C_{p,r} )  \lfloor  d\pi_{p,v}     = \int_{-r}^{r} 	L_{i,t} dt .
		\]
		Let $\alpha'$ be any given closed 2-form on the K3 surface $\pi^{-1}(p)$, which we regard as a locally defined closed 2-form  on $X$ by identifying the nearby K3 fibres through some diffeomorphism.
		We take a test 3-form $\alpha$, whose restriction to $C_{p,r}$ agrees with $\alpha'\wedge d\pi_{p,r}$. Then 
		\[
		\int_{L_i  \lfloor  C_{p,r}    }   \alpha =  \int_{-r}^{r} 	\int_{L_{i,t} } \alpha' dt = \int_{-r}^{r} \langle [\alpha'], \delta_r\rangle  dt = 2r  \langle [\alpha'], \delta_r\rangle.
		\]
		Passing to the $i\to +\infty$ limit, we obtain
		\[
	\langle  T  \lfloor  C_{p,r}   ,   [ \alpha ]	\rangle =  \int_{ L_\infty  \lfloor  C_{p,r}    }   \alpha  = 2r  \langle [\alpha'], \delta_r\rangle.
		\]
		By the integral representation of $T$ in (\ref{eqn:integralrepresentationT}),
		\[
		\begin{split}
				\langle T \lfloor  C_{p,r} , [\alpha]\rangle = & \int_{\Gamma \cap C_{p,r}  }  \langle [\alpha](x), \delta(x)\otimes  \xi(x)\rangle  d\mathcal{H}^1(x)
				\\
				=&  \int_{  \Gamma \cap C_{p,r}  }  \langle [\alpha'], \delta(x)\rangle  d\pi_{p,v}(\xi(x)) d\mathcal{H}^1.
		\end{split}
		\]
		Comparing the two equations,
		\[
		\delta_r= \frac{1}{2r} \int_{  \Gamma \cap C_{p,r}  }  \delta(x)   d\pi_{p,v}(\xi(x)) d\mathcal{H}^1.
		\]
	Since $p$ is a Lebesgue point of $\delta(x), \xi(x)$, and the approximate tangent space is unique at $p$, we see
	\[
	\delta_r= \delta(p) +o(1),\quad r\to 0.
	\]
		But $\delta_r\in H_2(K3,\Z)$ is an integral class, so the limit class $\delta(p)$ is also integral, and in fact $\delta_r=\delta(p)$ for sufficiently small $r$.
		\end{proof}

		Thus $T$ is a $\Lambda$-valued 1-rectifiable current. We now relate $\delta(p)\in \Lambda$ to the tangent space of $\Gamma$.

		\begin{prop}\label{prop:gradientcycle}
			$T$ is a $\Lambda$-weighted gradient cycle (\cf Def. \ref{Def:gradientcycle}).
		\end{prop}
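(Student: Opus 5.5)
Closedness $\partial T=0$ was already obtained from Lemma \ref{lem:limitcurrent2}, and Lemma \ref{lem:integrality} gives integrality. So it remains to prove the gradient flow condition at $\mathcal{H}^1$-a.e. $p\in\Gamma$: admissibility of $\delta(p)$ in the associative case, and $\xi(p)=v_p(\delta(p))$. We work at a point $p$ as in Lemma \ref{lem:integrality}, with the same cylinder $C_{p,r}$ and slices $L_{i,t}$ in the class $\delta_r=\delta(p)$. The strategy is a squeeze between two mass estimates, calibration from above and fibrewise minimal area from below. Equality must then hold in both, and the equality case forces the direction.

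\textbf{Step 1 (lower bound).} Write $\xi(p)=v$. By the coarea formula for $\pi_{p,v}$, which is $1$-Lipschitz up to $1+O(r)$,
\[
\norm{L_i}(C_{p,r})\ \geq\ (1-o(1))\int_{-r}^{r}\mathbf{M}(L_{i,t})\,dt .
\]
Each slice $L_{i,t}$ lies in $\pi^{-1}$ of a small ball of radius $2\gamma r$, and fibre projection is almost area nonincreasing (in the metric $\epsilon^2g_{K3}$, resp.\ $\epsilon^2 g_{ij}dy_idy_j$). Therefore $\mathbf{M}(L_{i,t})\geq \epsilon^2(|\delta(p)|_p-o(1))$, resp.\ $\epsilon^{n-1}(\cdots)$. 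Dividing by $\epsilon^2$ and letting $i\to\infty$ gives
\[
\mu(C_{p,r})\geq 2r\,|\delta(p)|_p\,(1-o(1)),
\]
so $\Theta_\mu(p)\geq|\delta(p)|_p$. In the special Lagrangian case we include the weight $\sqrt{\det g}$ consistently.

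\textbf{Step 2 (upper bound via calibration).} The mass of $L_i$ equals $\int_{L_i}\phi_\epsilon$. The $\underline\lambda$ part of $\phi_\epsilon$ is a pure base form and contributes $O(\epsilon^3)\cdot$ on the scale considered; it vanishes in the limit by Lemma \ref{lem:limitcurrent1}. The remaining part $\epsilon^2\underline\omega=\epsilon^2\sum\omega_i\wedge dx_i$ is a test form of the type in Theorem \ref{mainthm}(2). Cutting off with a function $\chi$ approximating $1_{C_{p,r}}$ and using convergence to $T$ (the integral representation (\ref{eqn:integralrepresentationT})), we get
\[
\epsilon^{-2}\!\int_{L_i}\chi\phi_\epsilon\to\int_\Gamma\chi\,\delta\cdot\nabla_{\xi}H\,d\mathcal{H}^1 .
\]
Since $\xi$ is a unit vector, Example \ref{eg:K3holocurve} gives $\delta\cdot\nabla_\xi H=\int_\delta\omega_\xi\leq|\delta|$, with $|\delta|=\mathbf{M}$ of the minimal representative, because $\omega_\xi$ calibrates fibre 2-cycles. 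Hence $\Theta_\mu(p)\leq \delta(p)\cdot\nabla_{\xi(p)}H(p)\leq|\delta(p)|_p$ at Lebesgue points.

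In the special Lagrangian case we proceed the same way. The term $\text{Re}(\sqrt{-1}\bigwedge(\epsilon dy_i-\sqrt{-1}dx_i))$ is dominated by its $\epsilon^{n-1}dy^{n-1}\wedge dx$ components. These pair to give $\langle\delta,\xi\rangle$, and Cauchy--Schwarz gives $\langle\delta,\xi\rangle\leq\sqrt{g^{ij}\delta_i\delta_j}$, which matches $|\delta|_p/\sqrt{\det g}$ by Example \ref{eg:minareatorus}.

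\textbf{Step 3 (equality case).} Combining the two bounds forces equalities throughout. In the special Lagrangian case, equality in Cauchy--Schwarz gives $\xi(p)=v_p(\delta(p))$; the sign is positive since the pairing is $\geq0$ and $\Theta>0$. In the associative case, equality $\int_\delta\omega_\xi=|\delta|_p$ means $\delta(p)$ is represented by an $I_\xi$-holomorphic curve, i.e.\ it is admissible. Then $v_p(\delta)=\delta\cdot\nabla H/|\delta|$ has unit norm with $\langle v_p(\delta),\xi\rangle=1$, so $\xi=v_p(\delta)$.

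The main obstacle is making Step 2 rigorous. It requires handling the cutoff $\chi$ in a way that interacts well with the Hausdorff concentration (\ref{eqn:suppLiclosetotangentline}). It also requires controlling the non-product errors and the $fdx_{123}$ correction uniformly, so that both bounds hold with $o(1)$ errors as $r\to0$ after $i\to\infty$.
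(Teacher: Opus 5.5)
Your squeeze (fibrewise minimal area from below, calibration from above, then the equality case) is the same mechanism the paper uses, and Steps 1 and 3 are fine. The gap is in Step 2, at the point you treat as routine.

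\textbf{The gap.} The form $\underline\lambda=-\sqrt{\det g}\,dx_1dx_2dx_3$ has $g_\epsilon$-comass of order one. So does $\epsilon^2\underline\omega$, since a fibre $2$-form has comass $\epsilon^{-2}$. The only size bound available is therefore $|\int_{L_i}\chi\underline\lambda|\le C\int\chi\,d\norm{L_i}\approx C\epsilon^2\mu(\chi)$. This is the \emph{same} order as the main term, not $O(\epsilon^3)$.

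Lemma \ref{lem:limitcurrent1} does not help. It concerns the unnormalised pairings $\int_{L_i}\alpha$ with $\epsilon$-independent $\alpha$, for which $\int_{L_i}\chi\underline\lambda=O(\epsilon^2)\to 0$ is trivial. It says nothing about $\epsilon^{-2}\int_{L_i}\chi\underline\lambda$, which is what you need.

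This is not a technicality. In the product model, horizontal $3$-planes oriented by $-dx_1dx_2dx_3$ are associative. So a priori $L_i$ could carry mass on nearly horizontal pieces that is invisible to $T$. Without excluding this, Step 2 only gives $\Theta_\mu(p)\le\delta(p)\cdot\nabla_{\xi(p)}H(p)$ plus an uncontrolled term of the same size, and the squeeze does not close.

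The same issue arises in the special Lagrangian case. The components of $\text{Re}(\sqrt{-1}\bigwedge_1^n(\epsilon dy_i-\sqrt{-1}dx_i))$ with $k\ge 3$ factors $dx$ carry the coefficient $\epsilon^{n-k}$ and also have comass of order one. They are not dominated by the $\epsilon^{n-1}dy^{n-1}\wedge dx$ part.

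Conversely, what you list as the main obstacles is harmless. A smooth base cutoff $\chi$ is a legitimate test form for $L_\infty$, and the correction $\epsilon^2 f\,dx_1dx_2dx_3$ really is $O(\epsilon^4)$.

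\textbf{The missing idea.} These terms are killed by \emph{closedness}, not by size. In the paper this happens inside the Stokes identity on $C_{p,t}$. On the slice $\{x_1=t\}$, the $\underline\lambda$-part of the primitive $\beta$ restricts to $-\frac{t}{3}\,dx_2\wedge dx_3$. This form is exact, so it integrates to zero on the closed good slices $L_{i,\pm t}$. The remaining terms $x_2\epsilon^2\omega_2+x_3\epsilon^2\omega_3$ contribute only $O(\gamma t\epsilon^2)$, by the good-slice mass bound and (\ref{eqn:suppLiclosetotangentline}). This is what produces (\ref{eqn:massincylinder}).

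\textbf{How to repair your route.}
\begin{itemize}
\item In the associative case, $\pi_*L_i$ is a closed integral $3$-current on the connected $3$-dimensional base. By the constancy theorem it equals $c_i[B]$, and the linear mass bound forces $c_i=0$ for large $i$. Hence $\int_{L_i}\chi\underline\lambda=0$ exactly.
\item For the mixed components in the special Lagrangian case, integrate by parts in a linear coordinate $x_j$ transverse to $\xi(p)$. Use a cutoff with $|d\chi|\lesssim r^{-1}$, and $|x_j|\le 2\gamma r$ on the support from (\ref{eqn:suppLiclosetotangentline}). This costs only an $O(\gamma)$ relative error.
\end{itemize}

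With this added, your argument is a valid alternative to the paper's. It identifies $\mu=\langle dH,\delta\otimes\xi\rangle\,\mathcal{H}^1\lfloor\Gamma$ directly from the weak convergence to $T$, so it proves Corollary \ref{cor:densitymeasure} at the same time. The paper instead stays in one cylinder and bounds the mass of a single good slice from above.
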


		\begin{proof}
			We focus on the associative case, and continue with the notations in the proof of Lemma \ref{lem:integrality}. Recall $0< \gamma\ll 1$ is any given small number.  We suppose $0<r<\gamma$, and $r$ is small enough as required in the arguments of Lemma \ref{lem:integrality}. The constants $C$ below are independent of $\gamma, \epsilon, i$.

	%	Moreover, for $i$ large enough depending on $r$, we have $\epsilon <r$, hence by the mass upper bound in Cor. \ref{cor:volumebound}
		%	\[
		%	\text{Mass}(   L_i \lfloor  C_{p,r}     ) \leq  \norm{L_i}( \pi^{-1}(B_g(p, 10r))   )\leq C\epsilon^2 r. 
		%	\]
%Thus by the coarea formula, we can \emph{select a good slice} $r/2< t<r$, such that 
%			\begin{equation}\label{eqn:goodslice}
%				\begin{cases}
%				\text{Mass}   (  {L_{i,t}} ) + 	\text{Mass}   (  {L_{i,-t}} ) \leq C\epsilon^2,
	%			\\
%				\int_{ \text{spt}(L_{i,t}   )  }( 1- |\nabla d_p| ) d\norm{ L_{i,t}}  +    	\int_{ \text{spt}(L_{i,-t}   )  }( 1- |\nabla d_p| ) d\norm{ L_{i,-t}}    \leq C\epsilon^2 \gamma r
%				\end{cases}
%			\end{equation}

	By the coarea formula, and the mass upper bound in Cor. \ref{cor:volumebound}, when $i$ is large enough so that $\epsilon\ll r$,
			\[
			\int_{r/2}^r  \text{Mass}   (  {L_{i,t}} )  dt \leq C \text{Mass}(   L_i \lfloor  C_{p,r}     ) \leq C \norm{L_i}( \pi^{-1}(B_g(p, 9r))   )\leq C\epsilon^2 r. 
			\]
			Similarly
		$
				\int_{r/2}^r  \text{Mass}   (  {L_{i,-t}} )  dt \leq C\epsilon^2 r. 
$
			Thus we can select a  good slice $r/2<t<r$, such that 
			\begin{equation}
			\text{Mass}   (  {L_{i,t}} ) + 	\text{Mass}   (  {L_{i,-t}} ) \leq C\epsilon^2.
			\end{equation}

			Recall that $L_{i,t}\subset   \{   \text{dist}_g(x, p+ \R v) <2 \gamma r   \}   $ as in the proof of Lemma \ref{lem:integrality}. Since $\gamma \ll 1$ is small, $\partial ( L\lfloor C_{p,t} )= L_{i,t}- L_{i,-t}$ as there is no boundary contribution for $|t|<r$. We take local coordinates $x_1, x_2, x_3$ so that $\frac{\partial}{\partial x_i}$ is $g$-orthonormal at $p$, and $x_1$ agrees with $\pi_{p,v}$, so in particular $v= \xi(p)= \frac{\partial}{\partial x_1}$. 
			Then by Stokes formula,
			\begin{equation}
				\begin{split}
					&	\int_{L_i  \lfloor C_{p,t} }(\sum_i \epsilon^2 \omega_i \wedge dx_i - dx_1dx_2dx_3)
					\\
					= & \int_{L_{i,t}} \sum_{cyc}  x_1(\epsilon^2\omega_1- \frac{1}{3}dx_2\wedge dx_3) -   \int_{L_{i,-t}} \sum_{cyc}  x_1(\epsilon^2\omega_1- \frac{1}{3}dx_2\wedge dx_3) .
				\end{split}
			\end{equation}
			Since $\phi_\epsilon$ deviates from the product structure by $O(r)$ relative error, and $r<\gamma$,
			\[
				\int_{L_i   \lfloor C_{p,t}   } \sum_i \epsilon^2\omega_i \wedge dx_i - dx_1dx_2dx_3= \int_{L_i   \lfloor C_{p,t}   } \phi_\epsilon +O(r\norm{L_i}(  C_{i,t}   ) )= \norm{L_i}(  C_{p,t}   ) (1+O(\gamma)).
			\]
			Since $L_{i,t}\subset   \{   x_1=t,  \text{dist}_g(x, p+ \R v) <2 \gamma r   \}   $, we know $|x_2|+ |x_3| \leq C\gamma r \leq C\gamma t$ on the support of $L_{i,t}$, so
			\[
			\begin{split}
					\int_{L_{i,t}} \sum_{cyc}  x_1(\epsilon^2 \omega_1- \frac{1}{3} dx_2\wedge dx_3) =&  t	\int_{L_{i,t}}  (\epsilon^2 \omega_1- \frac{1}{3} dx_2\wedge dx_3)  + O(\gamma  t \text{Mass}   (  {L_{i,t}} )  )
					\\
					=&  t	\int_{L_{i,t}}  (\epsilon^2 \omega_1- \frac{1}{3} dx_2\wedge dx_3)  + O(\gamma  t \epsilon^2 )
					\\
					= & t \epsilon^2 \langle \delta(p) , [\omega_1] \rangle + O(\gamma  t \epsilon^2 )
			\end{split}
			\]
			where the second line uses the mass bound for the good slice, and the third line evaluates the cohomological integral on the closed current $L_{i,t}$, using $[L_{i,t}]= \delta_r=\delta(p)\in H_2(K3,\Z)$ for small enough $r$. Likewise with $L_{i,-t}$.

			Combining these ingredients in the above Stokes formula,
			\begin{equation}\label{eqn:massincylinder}
			\norm{L_i}(  C_{p,t}   )= 2\epsilon^2 t (1+ O(\gamma ) ) \langle \delta(p) , [\omega_1] \rangle + O(\gamma  t \epsilon^2) .
			\end{equation}
		Since $|d\pi_{p,v}|\leq 1+Cr\leq 1+C\gamma$, the slicing inequality gives
				\[
				\int_{-t}^{t}\operatorname{Mass}(L_{i,s})\,ds
				\leq (1+C\gamma)\,\norm{L_i}(C_{p,t}).
				\]
				There is no factor $2t$ in this coarea estimate. Dividing by the length $2t$ of the slicing interval only when taking the average, and then using \eqref{eqn:massincylinder}, we obtain a slice with $|s|<t<r$ for which
				\[
				\begin{split}
					\operatorname{Mass}_{g_\epsilon}(L_{i,s})
					&\leq \frac{1+C\gamma}{2t}\,\norm{L_i}(C_{p,t})\\
					&\leq (1+C\gamma)\epsilon^2\langle\delta(p),[\omega_1]\rangle
					+C\gamma\epsilon^2.
				\end{split}
				\]
			We can use a $(1+C\gamma)$-Lipschitz map to project the closed integral current $L_{i,s}$ into the K3 fibre $\pi^{-1}(p)$ with metric $\epsilon^2 g_{K3}$. This gives a representative of $\delta(p)\in H_2(\pi^{-1}(p),\Z )$, with $g_{K3}$-mass bounded by $(1+ C\gamma)\langle \delta(p) , [\omega_1] \rangle + C\gamma $, hence
			\begin{equation*}
	|\delta(p) |_p \leq (1+ C\gamma)\langle \delta(p) , [\omega_1] \rangle + C\gamma .
			\end{equation*}
			This holds for any given small $\gamma>0$, hence 
$
	|\delta(p)|_p \leq  \langle \delta(p) , [\omega_1] \rangle .
$
Thus the mass minimizer of $\delta(p) \in H_2(\pi^{-1}(p),\Z )$ must be calibrated by the K\"ahler form $\omega_1$, so it is an $I_1$-holomorphic curve (possibly with multiplicity). In particular, for $v= \xi(p)= \frac{\partial}{\partial x_1}$, 
\begin{equation}\label{eqn:densitycalibration}
|\delta(p)|_p = \langle \delta(p) , [\omega_1] \rangle = \langle \delta(p), \nabla_v H\rangle ,
\end{equation}
and $\int_\delta [\omega_2]= \int_\delta [\omega_3]=0$, so that $\langle \delta(p), \nabla H\rangle $ points in the direction of $v$. This verifies the gradient flow condition in Def. \ref{Def:gradientcycle}. We recall that $\partial T=0$, hence $T$ is a $\Lambda$-weighted gradient cycle.

The special Lagrangian case follows the same strategy up to cosmetic changes.
		\end{proof}

\begin{cor}\label{cor:densitymeasure}
The limiting measure $\mu$ agrees with the density measure $\mu_T$ (\cf Def. \ref{Def:densitymeasure}).
\end{cor}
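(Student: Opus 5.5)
Both $\mu$ and $\mu_T$ are absolutely continuous with respect to $\mathcal{H}^1\lfloor_\Gamma$: for $\mu$ this is Cor. \ref{cor:limitingmeasure}, and for $\mu_T$ it holds by definition, with density $|\delta(x)|_x$. It therefore suffices to prove $\Theta_\mu(p)=|\delta(p)|_p$ for $\mathcal{H}^1$-a.e. $p\in\Gamma$. Fix a point $p$ as in the proof of Lemma \ref{lem:integrality}: the approximate tangent line is unique, and $p$ is a Lebesgue point of $\xi,\delta,\Theta_\mu$. We also take $r$ small, so that $\delta_r=\delta(p)$. We keep the cylinders $C_{p,t}$, the good slices, and the coordinates with $v=\partial/\partial x_1$ from the proof of Prop. \ref{prop:gradientcycle}, and treat the associative case; the special Lagrangian case only inserts the weight $\det(g)^{-1/2}$ coming from (\ref{eqn:specialLagcalibration}).

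The main input is the two-sided estimate (\ref{eqn:massincylinder}). For a good slice $t\in(r/2,r)$ it gives
\[
\epsilon^{-2}\norm{L_i}(C_{p,t})=2t\,\langle\delta(p),[\omega_1]\rangle+O(\gamma t),
\]
and by (\ref{eqn:densitycalibration}) we have $\langle\delta(p),[\omega_1]\rangle=|\delta(p)|_p$. We now pass to the limit $i\to\infty$. By (\ref{eqn:suppLiclosetotangentline}), all the mass of $\pi_*\norm{L_i}$ in $B_g(p,9r)$ lies in a $2\gamma r$-neighbourhood of the line. Hence, up to an $O(\gamma)$ change in the cylinder length, $\epsilon^{-2}\norm{L_i}(C_{p,t})$ is squeezed between the $\mu$-masses of slightly smaller and slightly larger cylinders, and these in turn are comparable to $\mu(B_g(p,t(1\pm C\gamma)))$. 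The good slice $t$ depends on $i$. To handle this, I would pass to a further subsequence with $t=t_i\to t_\infty\in[r/2,r]$. I would also use that $\mu$ has no atoms and satisfies $\mu(B(p,s))\le Cs$, so the boundary of each cylinder carries $\mu$-mass $O(\gamma r)$. This yields
\[
\mu(B_g(p,t_\infty))=2t_\infty|\delta(p)|_p+O(\gamma r).
\]

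Dividing by $2t_\infty\ge r$ gives $\frac{1}{2t_\infty}\mu(B_g(p,t_\infty))=|\delta(p)|_p+O(\gamma)$. Since the limit defining $\Theta_\mu(p)$ exists and $t_\infty\to0$ as $r\to 0$, we let $r\to0$ and then $\gamma\to0$ to obtain $\Theta_\mu(p)=|\delta(p)|_p$. This holds for a.e. $p$, so $\mu=\Theta_\mu\mathcal{H}^1\lfloor_\Gamma=|\delta|\mathcal{H}^1\lfloor_\Gamma=\mu_T$.

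The main obstacle is the limit passage in the second paragraph: the good slice varies with $i$, and a cylinder must be compared with a ball. I would handle both with the weak convergence on open and closed sets, the lower semicontinuity/upper semicontinuity inequalities, and the support confinement near the tangent line, which makes the cylinder and the ball agree up to $O(\gamma r)$ in measure. One should also note that the error terms in (\ref{eqn:massincylinder}) are uniform in $i$; this is already built into that derivation.
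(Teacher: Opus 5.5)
Your proposal is correct and follows the paper's own proof. Both arguments combine (\ref{eqn:massincylinder}) with (\ref{eqn:densitycalibration}), pass to the weak limit to get $\mu$ of a cylinder equal to $2t|\delta(p)|_p$ up to $O(\gamma t)$, use the tangent-line structure at $p$ to identify this with $\Theta_\mu(p)$, and then let $\gamma\to 0$. The paper writes $\mu(C_{p,t})=\lim_i\epsilon^{-2}\norm{L_i}(C_{p,t})$ as if $t$ were fixed and passes over two details that you handle explicitly: the $i$-dependence of the good slice, via $t_i\to t_\infty$ (legitimate because $\mu$ is already determined), and the boundary-mass and cylinder-versus-ball comparison.
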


\begin{proof}
We focus on the associative case, and follow the proof of Prop. \ref{prop:gradientcycle}.
By (\ref{eqn:massincylinder})(\ref{eqn:densitycalibration}) and the convergence of $\epsilon^{-2} \norm{L_i}$, we have
\[
\mu(  C_{p,t}  ) = \lim_{i\to +\infty} \epsilon^{-2} \norm{L_i} (C_{p,t}) =     2t (1+ O(\gamma ) ) |\delta(p)|_p + O(\gamma  t )         .
\]
But by asssumption $T_p \Gamma$ is the unique approximate tangent space, and $p$ is a Lebesgue point for $\Theta_\mu(x)$, hence
\[
\Theta_\mu(p)= \lim_{t\to 0} \frac{1}{2t}  \mu(  C_{p,t}  ) =   (1+ O(\gamma ) )|\delta(p)|_p + O(\gamma  ) .
\]
Since this holds for all small $\gamma>0$, we deduce $\Theta_\mu(p)=|\delta(p)|_p$. This holds for $\mathcal{H}^1$-a.e. $p\in \Gamma$, hence $\mu=\mu_T$.

The special Lagrangian case is almost verbatim.
\end{proof}

	Now we prove the main theorem \ref{mainthm}: item 1 follows by combining Cor. \ref{cor:limitingmeasure},  Cor. \ref{cor:densitymeasure}; item 2 follows from Lem. \ref{lem:integrality} and Prop. \ref{prop:gradientcycle}; item 3 follow from item 1 and Cor. \ref{cor:Hausdorffdist}.

		\begin{rmk}
		Our strategy fixes the subsequence $i\to +\infty$ by first extracting the limiting measure $\mu$ and the limiting current $T$. This circumvents taking diagonal subsequences with uncountably many choices corresponding to $p\in B$, and avoids dealing with the simultaneous limits $i\to +\infty$ and $r\to 0$. 
		\end{rmk}

		\section{Regularity of $\Lambda$-weighted gradient cycles}\label{sect:regularity}

		We now study regularity questions for $\Lambda$-weighted gradient cycles, regardless of whether they come from subsequential limits of associative or special Lagrangians.

	\subsection{Characterisation by calibrated condition}

	The \emph{$\Lambda$-weighted gradient cycles} (\cf Def. \ref{Def:gradientcycle}) can be viewed as a \emph{generalisation of calibrated 1-dimensional integral currents}.  In this section, let $T$ be any closed $\Lambda$-valued 1-rectifiable current as in Def. \ref{Def:Lambdavaluedcurrent}, and we write $\Gamma=\text{spt}(T)$.

	We first consider the special Lagrangian case. We have an $H^{n-1}(T^n,\R)$-valued 1-form on the open domain $B\subset \R^n$,
	\[
	\hat{\Omega}=	\sum_i (-1)^{n-i}  dy_1\wedge \ldots dy_{i-1}\wedge dy_{i+1}\wedge \ldots dy_n \wedge  dx_i,
	\]
	which has a natural pairing with any element of $H_{n-1}(T^n,\Z) \otimes_\Z T_p B$. Intuitively  $\hat{\Omega}$ can be viewed as the leading order term in $\text{Re}(\sqrt{-1} \bigwedge_1^n(dy_i- \sqrt{-1} dx_i) )$ that involves the least number of $dx_i$ factors.
	It satisfies the following properties:
	\begin{enumerate}
		\item  The  $H^{n-1}(T^n,\R)$-valued 1-form $\hat{\Omega}$ is $d$-closed.

		\item For $\mathcal{H}^1$-a.e. point $p\in \Gamma$, the weight $\delta(p)\in H_{n-1}(T^n,\Z)$ can be identified as an element of $T_p ^*B$ via contraction with $\hat{\Omega}$. For any unit tangent vector  $v=\sum v^i \frac{\partial}{\partial x_i}$  in $T_p \Gamma$
		with respect to the Hessian metric $g$ on $B$ , we have a calibration inequality  (\cf Example \ref{eg:minareatorus})
		\begin{equation}\label{eqn:pointwisecalibration1}
			\hat{\Omega} ( \delta(p) \otimes v) \leq  \sqrt{g^{ij} \delta(p)_i \delta(p)_j }  \sqrt{ g_{ij} v^i v^j}=  \frac{1}{\sqrt{\det(g)} } |\delta(p)|_p,
		\end{equation}
		with equality iff 
		$ g^{ij} \delta(p)_i \frac{\partial}{\partial x_j}  $
		 points in the direction of $v$, namely $v= v_p(\delta)$.

		The integrated version of this calibration inequality is that
		\begin{equation}\label{eqn:integratedcalibration1}
			\langle  \hat{\Omega},   T \lfloor_U  \rangle \leq  \int_{\Gamma\cap U}  \frac{1}{  \sqrt{\det(g)} }  |\delta(x)|_x d\mathcal{H}^1 (x),
		\end{equation}
		holds for any open subset $U$ properly contained in $B$. The equality is achieved for arbitrary $U$,
		iff $T$ is a $\Lambda$-weighted gradient cycle.

	\end{enumerate}

	Now we turn to the associative case. We have an $H^2(K3,\R)$-valued 1-form induced from the positive section $H: B\to H^2(K3,\R)$, given by 
	\[
	dH= \sum_1^3 \frac{\partial H}{\partial x_i} dx_i =\sum_1^3 [\omega_i] dx_i,
	\]
	which has a natural pairing with any element of $H_2(K3, \Z)\otimes_\Z T_p B$.
	It satisfies the following properties:
	\begin{enumerate}
		\item  The  $H^2(K3,\R)$-valued 1-form $dH$ is $d$-closed.
		
		\item For $\mathcal{H}^1$-a.e. point $p\in \Gamma$, and $v$ the unit tangent vector in $T_p \Gamma$ with respect to the metric $g$ on $B$ , we have 
		\begin{equation}\label{eqn:pointwisecalibration2}
			dH (\delta(p) \otimes v )=\langle \nabla_v H, \delta\rangle     \leq |\delta(p)|_p,
		\end{equation}
		with equality iff $\delta$ is admissible and $v=v_p(\delta)$ (\cf Example \ref{eg:K3holocurve}).

	%	the minimal area representative for $\delta(p)\in H_2(\pi^{-1}(p), \Z)$ is calibrated by the K\"ahler form on the K3 fibre determined by the directional derivative $\nabla_v H$, so that it is a holomorphic 2-cycle. In particular, the gradient vector 
	%	\[
	%	\delta(p)\cdot (\nabla H)(p)= \sum_{i,j} g^{ij} \langle \delta(p), [\omega_i] \rangle \frac{\partial}{\partial x_j}=  |\delta(p) |_p  v
	%	\]
	%	points in the direction of $v$. 

		The integrated version of this calibration inequality is that
		\begin{equation}\label{eqn:integratedcalibration2}
			\langle  dH,   T \lfloor_U  \rangle \leq \mu_T(U)= \int_{\Gamma\cap U}  |\delta(x)|_x d\mathcal{H}^1 (x),
		\end{equation}
		holds for any open subset $U$ properly contained in $B$. The equality is achieved for arbitrary $U$, %then the equality in (\ref{eqn:pointwisecalibration}) holds $\mathcal{H}^1$-a.e. In particular, assuming $\partial T=0$, then the equality case holds 
		iff $T$ is a $\Lambda$-weighted gradient cycle.
	\end{enumerate}

	\begin{cor}
		(\emph{Local minimization})
		Let $U$ be any open subset properly contained in $B$, and let $T$ be a $\Lambda$-weighted gradient cycle. Among all the closed $\Lambda$-weighted 1-rectifiable currents $T'$ on $B$, which coincide with $T$ on the complement of $U$, then $T$ minimizes the functional
		\[
		\int_{  \text{spt}(T)\cap U  } |\delta(x)|_x d\mathcal{H}^1(x)
		\]
		in the associative case, resp.
		\[
			\int_{  \text{spt}(T)\cap U  }  \frac{1}{  \sqrt{\det(g)} }      |\delta(x)|_x d\mathcal{H}^1(x)
		\]
		in the special Lagrangian case.

	\end{cor}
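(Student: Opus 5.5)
The plan is the standard calibration argument, using the closed $\Lambda^*\otimes\R$-valued 1-form $dH$ (resp. $\hat{\Omega}$) as the calibrating form. Fix $U$ with compact closure in $B$, and let $T'$ be a closed $\Lambda$-weighted 1-rectifiable current with $T'=T$ outside $U$. The difference $S=T-T'$ is then a closed $\Lambda\otimes\R$-valued 1-current supported in $\bar U$, a compact subset of the contractible domain $B$.

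\textbf{Step 1: $S$ does not see the calibrating form.} On $B$ the $H^2(K3,\R)$-valued form $dH$ is exact, with primitive the function $H$. Multiply $H$ by a cutoff $\chi\in C^\infty_{cpt}(B)$ with $\chi\equiv 1$ near $\bar U$. Then
\[
\langle dH, S\rangle = \langle d(\chi H), S\rangle = \langle \partial S, \chi H\rangle = 0,
\]
where we apply Def.~\ref{Def:Lambdavaluedcurrent} componentwise in a basis of $H^2(K3,\R)$.

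In the special Lagrangian case, $\hat{\Omega}$ is a closed form with constant coefficients $dx_i$ on $B\subset\R^n$. It is therefore exact: $\hat{\Omega}=d\big(\sum_i \pm (\widehat{dy_i})\, x_i\big)$. The same computation gives $\langle \hat{\Omega}, S\rangle=0$.

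\textbf{Step 2: the comparison.} Since $T$ and $T'$ agree off $U$, Step 1 gives
\[
\langle dH, T\lfloor_U\rangle = \langle dH, T'\lfloor_U\rangle .
\]
There is a subtlety here, because $U$ is open and $T$ and $T'$ could differ on $\partial U$. To handle it, I would work on a slightly larger $U'\Supset U$ on which they still coincide outside $U$. Alternatively, note that the region where they differ lies in $U$ by hypothesis, so one can pair with $\chi dH$ directly and split the pairing.

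We now compare the two sides:
\begin{enumerate}
\item For $T$, the equality case of (\ref{eqn:integratedcalibration2}) holds, since $T$ is a gradient cycle. So the left side equals $\int_{\Gamma\cap U}|\delta|_x\, d\mathcal{H}^1$.
\item For $T'$, the integrated inequality (\ref{eqn:integratedcalibration2}) gives $\langle dH,T'\lfloor_U\rangle\le \int_{\Gamma'\cap U}|\delta'|_x\, d\mathcal{H}^1$.
\end{enumerate}
This proves the minimization. The special Lagrangian case is identical, using (\ref{eqn:integratedcalibration1}) with the weight $1/\sqrt{\det g}$.

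\textbf{Where the work is.} The main thing to check is the pointwise inequalities (\ref{eqn:pointwisecalibration2}) and (\ref{eqn:pointwisecalibration1}) underlying the integrated versions, since everything else is formal.

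In the associative case, $\langle \nabla_vH,\delta\rangle=\int_\delta\omega_v$. Here $\omega_v$ is the hyperkähler Kähler form for the $g$-unit vector $v$, which is comass one on the K3 fibre. So by Wirtinger, $\int_\delta \omega_v$ is at most the mass of any integral representative, hence at most $|\delta|_p$. Note also that the inequality holds for either orientation $\pm\xi$, so the orientation convention of $T'$ is harmless.

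In the special Lagrangian case, (\ref{eqn:pointwisecalibration1}) is Cauchy--Schwarz combined with Example~\ref{eg:minareatorus}.

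Finally, I would remark that the minimized functional is exactly the $\mu_{T}(U)$ of Def.~\ref{Def:gradientcycle}.
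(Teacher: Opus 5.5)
Your proposal is correct and follows the argument the paper intends: the corollary is stated as an immediate consequence of the closedness (indeed exactness on $B$) of $dH$, resp.\ $\hat{\Omega}$, combined with the integrated calibration inequalities (\ref{eqn:integratedcalibration2}), (\ref{eqn:integratedcalibration1}) and their equality case for gradient cycles. The boundary ``subtlety'' you raise does not actually arise: the hypothesis $T'=T$ on $B\setminus U$ already makes $T-T'$ a closed current supported in $\bar U$, so $\langle dH, T\lfloor_U\rangle=\langle dH, T'\lfloor_U\rangle$ follows directly.
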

	
	\begin{lem}\label{lem:almoststationary}
		(Almost stationarity)
		Let $T$ be a $\Lambda$-weighted gradient cycle on $B$. For any compactly supported vector field $V$, there is some constant depending on the support, such that
		\[
		|\int_\Gamma  \text{div}_{T_x \Gamma} (V) d\mu_T |\leq C( \text{spt(V)} ) \int_\Gamma |V| d\mu_T.
		\]
	\end{lem}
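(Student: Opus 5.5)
The plan is a first variation argument using the calibration identity from the previous subsection together with closedness $\partial T=0$. We treat the associative case; the special Lagrangian case is identical once $dH$ is replaced by $\hat{\Omega}$ and $|\delta|_x$ by $|\delta|_x/\sqrt{\det g}$.

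Let $\Phi_s$ be the flow of $V$, and consider the pushforward $T_s=(\Phi_s)_*T$. This is still a closed $\Lambda$-weighted 1-rectifiable current, with the same weights $\delta(\Phi_s^{-1}(y))$, where the fibres $\pi^{-1}(x)$ and $\pi^{-1}(\Phi_s(x))$ are identified by parallel transport along the flow. The class $\delta$ is integral and locally constant in a neighbourhood, so this identification makes sense.

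Set $U$ to be a neighbourhood of $\mathrm{spt}(V)$. The first step is to show that the pairing $\langle dH, T_s\lfloor U\rangle$ does not depend on $s$. Since $dH$ is $d$-closed and $B$ is contractible, $dH=d(H)$ with $H$ a globally defined $H^2(K3,\R)$-valued function. Then
\[
\langle dH, T_s\rangle-\langle dH,T\rangle=\langle H, \partial(\text{homotopy})\rangle = 0,
\]
by the homotopy formula and $\partial T=0$. Equivalently, the first variation of $s\mapsto \langle dH,T_s\rangle$ equals $\langle \mathcal{L}_V dH, T\rangle=\langle d(\iota_V dH),T\rangle=0$.

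The second step uses the integrated calibration inequality (\ref{eqn:integratedcalibration2}), which gives $\langle dH,T_s\lfloor U\rangle\le F(s)$, where
\[
F(s)=\int_{\mathrm{spt}(T_s)\cap U}|\delta|_x\,d\mathcal{H}^1 .
\]
Equality holds at $s=0$ because $T$ is a gradient cycle. Hence $s=0$ is a minimum of $F(s)$ minus a constant, and therefore $F'(0)=0$ whenever the derivative exists.

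The third step computes $F'(0)$, which has two contributions:
\[
\frac{d}{ds}\Big|_{s=0}F(s)=\int_\Gamma \mathrm{div}_{T_x\Gamma}V\,d\mu_T+\int_\Gamma \partial_V\big(|\delta|_x\big)\,d\mathcal{H}^1 .
\]
The first term is the usual length variation. The second term accounts for the variation of the norm $|\delta|_x$ as the base point $x$ moves with $\delta$ held fixed. Setting $F'(0)=0$ yields
\[
\int_\Gamma\mathrm{div}_{T_x\Gamma}V\,d\mu_T=-\int_\Gamma \partial_V(|\delta|_x)\,d\mathcal{H}^1 .
\]

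The remaining and main obstacle is to show $|\partial_V |\delta|_x|\le C|V|\,|\delta|_x$ with $C$ uniform on compact sets, so that the right-hand side is bounded by $C\int|V|\,d\mu_T$. This is a statement that $\log|\delta|_x$ is Lipschitz in $x$ uniformly in $\delta$.

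In the special Lagrangian case this is explicit, because $|\delta|_x=\sqrt{\det(g)g^{ij}\delta_i\delta_j}$ and $g$ is smooth and uniformly positive on compact sets. In the associative case we argue directly from the definition. The fibres over $x$ and $x'$ are related by a diffeomorphism whose bi-Lipschitz constant, for the respective hyperkähler metrics, is $1+C|x-x'|$. Pushing a mass minimizer across this diffeomorphism gives $|\delta|_{x'}\le(1+C|x-x'|)^2|\delta|_x$.

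Since $|\delta|_x$ is only Lipschitz, $F$ may not be differentiable. We therefore run the argument with one-sided difference quotients, $F(s)-F(0)\ge 0$ for both signs of $s$. The length term varies smoothly, and the weight term satisfies
\[
\big|F(s)-F(0)-\text{length term}\big|\le C|s|\int|V|\,d\mu_T .
\]
Dividing by $s$ and letting $s\to0^{\pm}$ then gives both inequalities, which together yield the claimed bound.
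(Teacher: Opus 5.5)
Your proposal is correct and follows essentially the paper's argument. You push $T$ forward by the flow of $V$ and use that the weighted length is minimised at $s=0$; you re-derive the paper's local-minimisation corollary from the closedness of $dH$ and $\partial T=0$. You then control the weight variation through the uniform Lipschitz dependence of $\log|\delta|_x$ on $x$, and your two-sided difference quotients play the role of the paper's Dini derivative combined with replacing $V$ by $-V$.

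The only step you leave implicit is in the special Lagrangian case. There your argument gives the bound for the measure $\frac{1}{\sqrt{\det(g)}}\mu_T$, and to reach $\mu_T$ you apply it to $\sqrt{\det(g)}\,V$, exactly as the paper does.
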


	\begin{proof}
		We first consider the associative case. Let $V$ be any vector field supported on an open subset $U$ properly contained in $B$. Let $f_t=\exp(tV)$ denote the 1-parameter family of diffeomorphisms generated by $V$, and let $T_t=( f_t)_*(T)$ be the pushforward of $T$. Since the functional on $T_t$ is minimised at $t=0$,
		\[
		\int_{ \Gamma\cap U}  |\delta(x)|_{f_t(x)}  |Df_t (\xi(x))| d\mathcal{H}^1(x) \geq 	\int_{ \Gamma\cap U}  |\delta(x)|_{x} d\mathcal{H}^1(x).
		\]
		Thus
		\[
		\int_{ \Gamma\cap U}  |\delta(x)|_x \text{div}_{T_x \Gamma} (V) d\mathcal{H}^1 +  \int_{\Gamma\cap U}   D_t^-|_{t=0} ( |\delta(x)|_{f_t(x)}  ) d\mathcal{H}^1 \leq 0
		\]
		where $D_t^-$ denotes the left Dini derivative. The min area norm $|\delta|_x$ depends in a Lipschitz way on the variation of the fibrewise metrics, hence 
		\[
		| D_t^-|_{t=0} ( |\delta(x)|_{f_t(x)}  ) |\leq  C |\delta(x) |_x  |V|.
		\]
		Recalling that $\mu_T= |\delta(x)|_x d\mathcal{H}^1\lfloor \Gamma$, and replacing $V$ by $-V$, we deduce
		\[
			|\int_\Gamma  \text{div}_{T_x \Gamma} (V) d\mu_T | \leq C( \text{spt(V)} ) \int_\Gamma |V| d\mu_T.
		\]
		
		In the special Lagrangian case, we replace $\mathcal{H}^1$ by $\frac{1}{  \sqrt{ \det(g)  }  }\mathcal{H}^1$, and similar arguments lead to
		\[
			|\int_\Gamma  \text{div}_{T_x \Gamma} (V)  \frac{1}{  \sqrt{ \det(g)  }  }   d\mu_T | \leq C( \text{spt(V)} ) \int_\Gamma    |V|    \frac{1}{  \sqrt{ \det(g)  }  }  d\mu_T.
		\]
		Replacing $V$ by $V \sqrt{ \det(g)  }$ gives the result.
	\end{proof}

	\begin{cor}\label{Cor:monotonicitygradientcycle}
		(Monotonicity formula)
	Let $T$ be a $\Lambda$-weighted gradient cycle on $B$, and suppose $B_g(p, r_0)$ is contained in a given compact subset of $B$. Then for any $0<r_1<r_2<r_0$, we have
	\[
	\begin{split}
		&	e^{Cr_2}	r_2^{-1} 	\norm{T}(  B_g(p, r_2)  )- e^{C r_1} r_1^{-1} 	\norm{T}(  B_g(p, r_1)  )
		\\
		\geq &  C^{-1}
		\int_{    \text{spt}(T) \cap (B(p,r_2)\setminus B(p,r_1) ) }  d_p^{-1} |\nabla^\perp d_p|^2  d\norm{ T}  \geq 0.
	\end{split}
	\]
	In particular, 
\[
\Theta(p):= \lim_{r\to 0} \frac{1}{2r}  \mu_T( B_g(p,r)  )
\]
is well defined at every $p\in \text{spt}(T)$.
	\end{cor}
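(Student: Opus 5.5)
We run the standard Allard-type monotonicity computation for stationary varifolds, with the almost stationarity of Lemma \ref{lem:almoststationary} supplying the error term. Here $\norm{T}$ denotes the density measure $\mu_T$; in the special Lagrangian case we work with the weighted measure $\frac{1}{\sqrt{\det(g)}}\mu_T$ and absorb the bounded smooth factor into the constants. Work in $g$-normal coordinates centred at $p$, so that $g$ agrees with the Euclidean metric up to $O(|x|^2)$ error and its Christoffel symbols are $O(|x|)$.

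\textbf{Test vector field.} Fix a smooth cutoff $\phi:\R\to[0,1]$, nonincreasing, equal to $1$ on $(-\infty,1/2]$ and vanishing on $[1,\infty)$. For $0<r<r_0$ take
\[
V(x)=\phi(d_p(x)/r)\, d_p(x)\nabla d_p(x) = \tfrac12\phi(d_p/r)\nabla (d_p^2).
\]
Let $P$ denote $g$-orthogonal projection onto $T_x\Gamma$. The Hessian of $\tfrac12 d_p^2$ equals $g+O(d_p^2)$, so
\[
\mathrm{div}_{T_x\Gamma}V=\phi(d_p/r)\,(1+O(d_p^2)) + r^{-1}d_p\,\phi'(d_p/r)\,|P\nabla d_p|^2 .
\]
Write $|P\nabla d_p|^2=1-|\nabla^\perp d_p|^2$, where $\nabla^\perp$ is the component normal to $T_x\Gamma$. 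Set $I(r)=\int\phi(d_p/r)\,d\mu_T$, so that $I'(r)=-r^{-2}\int d_p\phi'(d_p/r)\,d\mu_T$. The stationarity identity then reads
\[
\int \mathrm{div}_{T\Gamma}V\,d\mu_T = I(r)-rI'(r) - \int r^{-1}d_p\phi'(d_p/r)\,|\nabla^\perp d_p|^2\,d\mu_T + O(r^2 I(r)).
\]
Lemma \ref{lem:almoststationary} bounds the left side in absolute value by $C\int|V|\,d\mu_T\le C r I(r)$.

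\textbf{Integration.} Rearranging gives $\frac{d}{dr}\big(r^{-1}I(r)\big)\ge -C r^{-1}I(r)+r^{-2}\int(-d_p\phi')|\nabla^\perp d_p|^2\,d\mu_T$. Hence $\frac{d}{dr}\big(e^{Cr}r^{-1}I(r)\big)\ge r^{-2}\int(-d_p\phi'(d_p/r))|\nabla^\perp d_p|^2\,d\mu_T$. We integrate from $r_1$ to $r_2$ and let $\phi$ increase to the indicator of $(-\infty,1)$. The usual Fubini computation turns the right side into $\int_{B(p,r_2)\setminus B(p,r_1)} d_p^{-1}|\nabla^\perp d_p|^2\,d\mu_T$, up to the harmless constant $C^{-1}$. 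This constant also absorbs the weight conversion in the special Lagrangian case and the comparison of $g$-balls with coordinate balls.

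\textbf{Density.} The quantity $e^{Cr}r^{-1}\mu_T(B_g(p,r))$ is monotone nondecreasing in $r$ and bounded below by $0$, so its limit as $r\to0$ exists. Since $e^{Cr}\to1$, the limit $\Theta(p)$ exists at every $p$, not only almost everywhere.

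\textbf{Main obstacle.} The delicate point is that Lemma \ref{lem:almoststationary} only gives an error of order $\int|V|\,d\mu_T$ rather than zero, and one must check that this error is $O(rI(r))$ so it can be absorbed into the factor $e^{Cr}$. This holds because $|V|\le d_p\le r$ on the support of $V$. A second point of care is the curvature correction to the Hessian of $d_p^2$ in the non-Euclidean Hessian metric. It is $O(r^2)$, so it is likewise absorbed into $e^{Cr}$ after possibly enlarging $C$.
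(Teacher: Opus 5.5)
Your proposal is correct and follows the route the paper intends. The paper gives no separate proof; it treats the corollary as the standard Allard-type monotonicity computation driven by the bounded first variation in Lemma~\ref{lem:almoststationary}, which is what you carry out with $V=\tfrac12\phi(d_p/r)\nabla(d_p^2)$ and the integrating factor $e^{Cr}$. In the special Lagrangian case no reweighting is needed, since that lemma is already stated for $\mu_T$.

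One slip needs fixing. Rearranging your first-variation identity gives
$\frac{d}{dr}\bigl(r^{-1}I(r)\bigr)\ge -Cr^{-1}I(r)+r^{-3}\int(-d_p\phi'(d_p/r))\,|\nabla^\perp d_p|^2\,d\mu_T$,
with $r^{-3}$, not $r^{-2}$. It is exactly this power that, after substituting $s=d_p/r$, produces the weight $d_p^{-1}$ in the Fubini step. With $r^{-2}$ you would only obtain $\int_{B(p,r_2)\setminus B(p,r_1)}|\nabla^\perp d_p|^2\,d\mu_T$.
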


	\begin{cor}\label{cor:noncollapsing}
		(Local noncollapsing) For any $B(p,r)$ contained in a given compact subset of $B$, there is a uniform lower bound
		\[
		\mu_T(  B_g(p,r) ) \geq C^{-1} r.
		\]
	\end{cor}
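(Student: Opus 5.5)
The plan is to combine the monotonicity formula of Cor. \ref{Cor:monotonicitygradientcycle} with a uniform positive lower bound on the density at $\mathcal{H}^1$-a.e. point of $\Gamma=\text{spt}(T)$. Then we pass from a.e. points to every point of the support by approximation.

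\textbf{Step 1 (a.e. density lower bound).} For $\mathcal{H}^1$-a.e. $x\in\Gamma$, $x$ is a Lebesgue point of $\delta$ and of $|\delta(\cdot)|_\cdot$, and $\Gamma$ has a unique approximate tangent line at $x$. At such a point the density is $\Theta(x)=|\delta(x)|_x$. Since $x\in\text{spt}(T)$, $\delta(x)\neq 0$ on a set of full measure, and $\delta(x)$ lies in the lattice $\Lambda$. I claim there is $c=c(K)>0$ with $|\delta|_x\geq c$ for every nonzero $\delta\in\Lambda$ and every $x$ in a fixed compact $K\subset B$.
\begin{itemize}
\item In the special Lagrangian case this is explicit from Example \ref{eg:minareatorus}: $|\delta|_x=\sqrt{\det(g)g^{ij}\delta_i\delta_j}$, a norm on the lattice depending continuously on $x$, hence bounded below on $\Lambda\setminus\{0\}$ uniformly over $K$.
\item In the associative case, a mass minimizing integral 2-cycle in a nonzero class is nonzero and stationary. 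The monotonicity formula on the compact K3 fibre, whose hyperk\"ahler metrics have uniformly bounded geometry for $x\in K$, gives mass at least $c\rho^2$, where $\rho$ is a uniform lower bound on the injectivity radius.
\end{itemize}

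\textbf{Step 2 (monotonicity at good points).} For such a good point $q$ and $0<s<r_0$, letting $r_1\to 0$ in Cor. \ref{Cor:monotonicitygradientcycle} gives
\[
\mu_T(B_g(q,s))\geq 2e^{-Cs}\Theta(q)\,s\geq 2e^{-Cr_0}c\,s .
\]

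\textbf{Step 3 (all points of the support).} Let $p\in\text{spt}(T)$ be arbitrary and $r$ admissible. Every neighbourhood of $p$ has positive $\mu_T$-measure, so good points $q$ exist with $d(p,q)<\eta$ for any $\eta>0$. Then $B_g(q,r-\eta)\subset B_g(p,r)$, so
\[
\mu_T(B_g(p,r))\geq 2e^{-Cr_0}c\,(r-\eta).
\]
Letting $\eta\to0$ gives the claim. If $r$ is not small compared with $r_0$, we simply apply the bound at radius $\min(r,r_0/2)$ and absorb the ratio into $C$.

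The main obstacle is the uniform lower bound on $|\delta|_x$ in the associative case. Nonzero integral classes may pair trivially with all of $[\omega_1],[\omega_2],[\omega_3]$, so cohomological pairing alone does not suffice. This is why I would use the geometric monotonicity for area minimizing cycles in the fibre, with constants uniform over a compact family of hyperk\"ahler metrics. An alternative is to bound the mass below by the flat norm distance to zero, which is positive and continuous on nonzero lattice classes.
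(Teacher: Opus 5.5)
Your proposal is correct and follows the paper's route: a uniform lower bound $\Theta(q)=|\delta(q)|_q\geq C^{-1}$ at $\mathcal{H}^1$-a.e.\ point of $\Gamma$, the monotonicity formula of Cor.~\ref{Cor:monotonicitygradientcycle} with $r_1\to 0$, and a limiting argument to reach every point of the support. You supply details the paper leaves implicit: the explicit inclusion $B_g(q,r-\eta)\subset B_g(p,r)$, and the lattice lower bound on $|\delta|_x$. In the associative case that bound also follows more simply by pairing $\delta$ with closed forms representing an integral basis of $H^2(K3,\Z)$ and bounding their comass uniformly over the compact set.
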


	\begin{proof}
		For $\mathcal{H}^1$-a.e. $p\in \Gamma$, the density $\Theta(p)= |\delta(p)|_p$ has a uniform lower bound on any compact subset of $B$. The monotonicity formula implies
		\[
		\frac{1}{r}e^{Cr}	\mu_T(  B_g(p,r) ) \geq \Theta(p)\geq C^{-1},
		\]
		so there is a lower bound $\mu_T(   B_g(p,r)   )\geq C^{-1} r$. This extends to all the points $p\in \Gamma$ with $B_g(p,r)$ contained in the compact set, by a limiting argument.
	\end{proof}

		\subsection{Tangent cone}

		From henceforth	let $T$ be a $\Lambda$-weighted gradient cycle in $B$, and denote $\Gamma= \text{spt}(T)\subset B$. We shall use the tangent cones of $T$, to study the local graph-like structure of $T$.
	Let $p\in \Gamma $, and let \[
	\lambda_{p,r} : B\to T_pB,    \quad \lambda_{p,r}(x)= \frac{x-p}{r}.
	\]
	  By the monotonicity formula, for small enough $r>0$, the sequence of closed $\Lambda$-valued 1-currents $(\lambda_{p,r})_* T$ have uniformly bounded mass on fixed balls in $T_p B$. By White's generalisation of the Federer-Fleming compactness theorem for integral currents with value in discrete normed groups \cite{White}, we can take subsequential limits as $r\to 0$, to obtain some closed $\Lambda$-valued 1-rectifiable current $ C_p$ inside $T_p B$, called a \emph{tangent cone} of $T$ at $p$.

		\begin{prop}\label{prop:tangentcone}
			(Structure of tangent cone)
			The tangent cone $C_p$ is given by a finite $\Lambda$-weighted sum of the current of integration along distinct rays:
		\begin{equation}\label{eqn:tangentcone}
			C_p= \sum_i  \delta_i\otimes   [\R_+ v_p(\delta_i)] ,\quad \delta_i\in \Lambda,
		\end{equation}
		subject to the \emph{balancing condition}:
		\begin{equation}\label{eqn:tangentcone2}
			\sum_i \delta_i=0\in \Lambda.
		\end{equation}
		Furthermore, the total mass of $C_p$ on the unit ball is 
		\begin{equation}\label{eqn:densityboundondelta}
			\Theta(p)= \frac{1}{2}  \sum_i |\delta_i|_p.
		\end{equation}
		Each $\delta_i$ appearing in the tangent cone $C_p$  satisfies $|\delta_i|\leq \Theta(p)$.
	Moreover in the associative case, all $\delta_i$ are admissible (\cf Example \ref{eg:minareatorus}, \ref{eg:K3holocurve}.)
		\end{prop}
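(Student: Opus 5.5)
We first show that every tangent cone $C_p$ is itself a $\Lambda$-weighted gradient cycle for the frozen, constant-coefficient geometry at $p$. In that geometry the metric $g$ is replaced by $g(p)$ and the fibre norms by $|\cdot|_p$. The limit $C_p$ is closed, being a limit of closed currents. For the calibration identity, rescaling $T$ by $\lambda_{p,r}$ replaces the calibration form $\hat\Omega$ (resp.\ $dH$) by a form which converges to the constant form at $p$ as $r\to 0$. We then pass to the limit in the equality case of (\ref{eqn:integratedcalibration1}) (resp.\ (\ref{eqn:integratedcalibration2})): the pairing side converges by weak convergence. The mass side is lower semicontinuous, by the lower semicontinuity of mass in White's compactness theorem applied with the norm $|\cdot|_p$. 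Combined with the pointwise inequality (\ref{eqn:pointwisecalibration1}) (resp.\ (\ref{eqn:pointwisecalibration2})), this gives equality for $C_p$ on every ball. It also shows that the rescaled density measures converge to $\mu_{C_p}$ with no mass loss. So $\mu_{C_p}$-a.e.\ the tangent is $v_p(\delta)$, and in the associative case $\delta$ is admissible with respect to the fixed hyperk\"ahler K3 $\pi^{-1}(p)$.

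Next we show $C_p$ is a cone. By Cor.\ \ref{Cor:monotonicitygradientcycle}, $r^{-1}\mu_T(B(p,r))$ converges to $2\Theta(p)$. Since no mass is lost in the limit, $\mu_{C_p}(B(0,s))=2\Theta(p)s$ for all $s$. The error term $\int d_p^{-1}|\nabla^\perp d_p|^2 d\norm{T}$ over the annulus $B(p,sr)\setminus B(p,s'r)$ tends to $0$ as $r\to 0$. In the limit this forces $\nabla^\perp|x|=0$ a.e.\ on $\operatorname{spt}C_p$, i.e.\ $C_p$ is radial. A radial rectifiable set whose tangent at a.e.\ point $x$ is the direction $\pm v_p(\delta(x))$ can only lie along rays whose direction is of this form.

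The key discreteness step is as follows. Along a ray direction $\theta$, the weight must satisfy $\delta\in\Lambda$, $|\delta|_p\le C$ by the local density bound, and $v_p(\delta)=\pm\theta$. Only finitely many lattice classes have $|\delta|_p$ bounded: in the torus case $|\cdot|_p$ is a norm on $\Lambda\otimes\R$, and in the K3 case the admissible $\delta$ have bounded area and are represented by holomorphic curves, so $\delta^2\ge -2m^2$-type bounds together with area bounds leave finitely many classes. This is the step we expect to need the most care in the associative case, where $\Lambda$ is indefinite and we must use admissibility: $\delta\cdot\nabla H$ determines the projection onto the positive $3$-plane. We plan to use that the number of effective classes of bounded $\omega_v$-area is finite, uniformly in $v$, by compactness of the twistor sphere. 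Given finitely many classes, only finitely many directions $\pm v_p(\delta)$ occur, hence only finitely many rays.

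For the weights, on each ray $\R_+\theta$ the radial cone structure and closedness away from $0$ force $\delta$ to be constant along the ray. Since $\partial C_p=0$ away from the origin, the distributional derivative along the ray vanishes. Because $v_p(\delta)$ is only determined up to sign, the orientation $\xi=v_p(\delta)$ fixes the sign. We absorb orientation by writing each ray as $\delta_i\otimes[\R_+v_p(\delta_i)]$: a ray traversed inward is rewritten with weight $-\delta_i$, using $v_p(-\delta)=-v_p(\delta)$. This gives (\ref{eqn:tangentcone}). Closedness at the origin, tested with a function $f$ equal to $1$ near $0$, gives $\sum_i\delta_i=0$, which is (\ref{eqn:tangentcone2}).

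The mass of $C_p$ on the unit ball is $\sum_i|\delta_i|_p$. Equating this with $2\Theta(p)$ gives (\ref{eqn:densityboundondelta}). Finally, balancing gives $|\delta_j|_p=|\sum_{i\ne j}\delta_i|_p\le\sum_{i\ne j}|\delta_i|_p$ by the triangle inequality for the min-mass norm. Hence $2|\delta_j|_p\le\sum_i|\delta_i|_p=2\Theta(p)$, that is, $|\delta_j|_p\le\Theta(p)$.
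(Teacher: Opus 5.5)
Your proposal is correct and follows essentially the paper's argument. You pass the calibration equality to $C_p$ via lower semicontinuity of mass, which also gives $\xi=v_p(\delta)$, admissibility and no mass loss; you get conicality from the monotonicity formula; you get constancy of the weights along rays and the balancing from $\partial C_p=0$; you then count mass $\sum_i|\delta_i|_p=2\Theta(p)$ and finish with the triangle inequality. The only deviation is the finiteness of rays: the paper avoids your twistor/Gromov-compactness discussion by noting that each ray carries at least $\min_{0\neq\delta\in\Lambda}|\delta|_p>0$ of the unit-ball mass $2\Theta(p)$, so no admissibility input is needed there.
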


		\begin{proof}
			We focus on the associative case. The $\Lambda$-weighted gradient cycle $T$ is characterised among closed $\Lambda$-valued rectifiable currents, by achieving the equality for the calibration inequality (\ref{eqn:integratedcalibration2}). Using the semicontinuity of mass, we can pass this property to the weak limit, to see that $C_p$ achieves the equality for a version of calibration inequality: for any open subset $U$ properly contained in $B$,
			\[
			\int_{ \text{spt}(C_p) \cap U}  |\delta(x)|_x d\mathcal{H}^1(x)= \int_{  \text{spt}(C_p) \cap U }  \langle dH(p), \delta(x)\otimes \xi(x)\rangle d\mathcal{H}^1(x).
			\]
Thus at $\mathcal{H}^1$-a.e. $x\in \text{spt}(C_p)$, 
			\[
			|\delta(x)|_x = \langle dH(p), \delta(x)\otimes \xi(x)\rangle ,
			\]
			hence $\delta(x)$ is \emph{admissible}, and $\xi(x)= v_p(\delta(x))$ (\cf Example \ref{eg:K3holocurve}).

			The monotonicity formula for $T$ implies that $\text{spt}(C_p)$ is conical, hence it is a \emph{union of rays} in $T_p B$ based at the origin. Since $\partial C_p=0$, the $\delta(x)\in \Lambda$ is constant along each ray. Thus the unit tangent vector  $v_p(\delta)$ coincides with the direction of the ray up to sign. 
		Using the flexibility to simultaneously reverse the sign of $\delta(x)$ and $\xi(x)$ without changing $T$, we can ensure the ray agrees with $\R_{\geq 0} v_p(\delta)$.
			This shows the representation formula 
			\[
				C_p= \sum_i  \delta_i\otimes   [\R_+ v_p(\delta_i)] ,\quad \delta_i\in \Lambda.
			\]
			Each unit ray has length one, hence the mass inside the unit ball is $\sum_i|\delta_i|_p$. By the tangent-cone construction it equals
				$\lim_{r\downarrow0}r^{-1}\mu_T(B_g(p,r))=2\Theta(p)$, which proves \eqref{eqn:densityboundondelta}.
			(We remark that when the tangent cone $C_p$ is supported on a line $\R v_p(\delta(p))$, then there are two ray directions, so $\Theta(p)= \frac{1}{2}\sum_i |\delta_i|_p= |\delta(p)|_p$.)

			 Since  all $|\delta_i|_p$ are bounded positively from below, the sum in (\ref{eqn:tangentcone}) must be finite. The balancing condition (\ref{eqn:tangentcone2}) follows from 
			 $\partial C_p=0$ at the origin.

	By the balancing condition (\ref{eqn:tangentcone2}),
			 \[
			 |\delta_i|_p =|\sum_{j\neq i} \delta_j |_p \leq \sum_{j\neq i} |\delta_j|_p,
			 \]
			hence $|\delta_i|_p\leq \frac{1}{2} \sum_j |\delta_j|_p  = \Theta(p)$.

			The special Lagrangian case is almost verbatim.
		\end{proof}

\begin{cor}
At any given $p\in \Gamma$, there is a \emph{unique tangent cone} to $T$.
\end{cor}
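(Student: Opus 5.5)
The plan is to exploit the rigidity in Prop. \ref{prop:tangentcone}: every tangent cone is a finite sum of rays whose directions are \emph{determined by their weights}, namely $C_p=\sum_i \delta_i\otimes[\R_+ v_p(\delta_i)]$. Since the direction $v_p(\delta_i)$ is a function of $\delta_i$, a tangent cone is completely determined by the finite multiset of classes $\{\delta_i\}$. Moreover, by (\ref{eqn:densityboundondelta}) each $\delta_i$ satisfies $|\delta_i|_p\leq \Theta(p)$, and $\Theta(p)$ does not depend on the choice of subsequence, by Cor. \ref{Cor:monotonicitygradientcycle}. The norm $|\cdot|_p$ is a norm on $\Lambda\otimes\R$ (positive on nonzero lattice classes; in the special Lagrangian case this is explicit from Example \ref{eg:minareatorus}). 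So only finitely many lattice classes can occur, and the set $\mathcal{C}_p$ of all possible tangent cones at $p$ (all balanced finite configurations with total weight $2\Theta(p)$) is a \emph{finite} set. In particular it is discrete in the flat/weak topology on currents in the unit ball.

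Next I will show that the set $\mathcal{T}_p$ of actual tangent cones, meaning the subsequential limits of $(\lambda_{p,r})_*T$ as $r\to0$, is connected in a suitable metric topology. Fix the flat metric (or a weak-$*$ metric given by a countable dense family of test forms) on closed $\Lambda$-valued 1-currents with uniformly bounded mass in the ball $B(0,2)\subset T_pB$. The family $r\mapsto(\lambda_{p,r})_*T$ is continuous in $r$ for this metric: rescaling by nearby factors moves the current by an amount controlled by $|r/r'-1|$ times the mass, and the mass is bounded by the monotonicity formula. It is also precompact, by White's compactness theorem \cite{White} together with the uniform mass bound. The set of accumulation points as $r\to 0$ of a continuous curve in a compact metric space is connected, by the standard argument: if it split into two disjoint closed pieces, the curve would have to cross a separating neighbourhood infinitely often, producing a further accumulation point outside both pieces.

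Combining the two steps, $\mathcal{T}_p\subset\mathcal{C}_p$ is a connected subset of a finite discrete set, hence a single point, which gives uniqueness. The main obstacle is making the discreteness statement honest in the chosen metric. Two distinct configurations in $\mathcal{C}_p$ must have positive flat distance: they differ either in some weight $\delta_i$ or in a direction, and since everything is conical with finitely many rays and weights from a finite set, one can test against a suitable $\Lambda^*$-valued 1-form localized near a ray direction to separate them. This separation must also survive restriction to the unit ball, which requires handling the boundary slice. I would try first to work with restrictions to the annulus $B(0,2)\setminus B(0,1/2)$, where conical currents are smooth and separated, and test against forms supported there.

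A secondary point is continuity of $r\mapsto(\lambda_{p,r})_*T$ near $r\to0$ in the weak sense. This follows because for test forms $\beta$ one has $|\langle(\lambda_{p,r})_*T-(\lambda_{p,r'})_*T,\beta\rangle|\leq C\|\beta\|_{C^1}|\log(r/r')|\,r^{-1}\mu_T(B_g(p,Cr))$, which is uniformly bounded by Cor. \ref{Cor:monotonicitygradientcycle}.
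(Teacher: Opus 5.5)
Your proposal is correct and takes the paper's route. Only finitely many candidate cones of the form (\ref{eqn:tangentcone}) are possible, since each weight satisfies $|\delta_i|_p\le\Theta(p)$ and determines its own ray; combined with connectedness of the set of subsequential limits of $(\lambda_{p,r})_*T$, this forces uniqueness, and you simply fill in the details the paper leaves implicit. The separation step you flag as the main obstacle is in fact automatic: a finite subset of any metric space is discrete, and two distinct conical currents already differ as currents on $B(0,2)$.
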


\begin{proof}
By (\ref{eqn:densityboundondelta}) there are only finitely many choices of $\delta\in \Lambda$, so there are only finitely many candidates for the tangent cones $C_p$ of the form (\ref{eqn:tangentcone}). But the space of tangent cones at $p$ is connected in the flat topology, while the tangent cone $C_p$ cannot deform, which implies its uniqueness.
\end{proof}

	In particular, at \emph{every} point on $\Gamma= \text{spt}(T)$,  we can refer to $\delta(x) $ as any member of the finitely many $\delta_i$ that appear in the tangent cone.  (At $\mathcal{H}^1$-a.e point on $\Gamma$, this $\delta(x)\in \Lambda$ is unique up to sign.)

		\subsection{Local structure around the tangent cone}

			We now study the local structure of a $\Lambda$-weighted gradient cycle $T$ close to a given point $p\in B$. We focus on the associative case, as it contains all the difficulties present in the special Lagrangian case, and a number of additional subtleties.

			We let $A$ denote the finite set of admissible $\delta\in H_2(K3, \Z)$ with $|\delta|_p \leq \Theta(p)$. This constrains the possible weights $\delta(x)$ for nearby $x\in B$.
			
				\begin{rmk}\label{rmk:reversedelta}
				We note that if $\delta$ is admissible, then $-\delta$ is also admissible, since an $I_v$-holomorphic curve with orientation reversed, is an $-I_v$-holomorphic curve. 	
				Hence $A$ is symmetric under $\delta\to -\delta$.
			%	If $\delta $ is admissible on the fibre $\pi^{-1}(p)$, it may not be admissible on the nearby fibres, because holomorphic curves do not necessarily deform to nearby fibres. 
			\end{rmk}
			
			\begin{lem}
   There is some small neighbourhood $U$ of $p\in B$, such that whenever $x\in U$,  the weight $\delta(x)$ for $T$ lies inside $A$.
			\end{lem}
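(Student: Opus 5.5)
The plan is to combine three facts: the density $\Theta$ is upper semicontinuous (from the monotonicity formula), the lattice $\Lambda$ is discrete, and admissibility is a closed condition in the base point. By Prop. \ref{prop:tangentcone} applied at an arbitrary $x\in\Gamma$ near $p$, every weight $\delta(x)$ appearing in the tangent cone $C_x$ has two properties. It is admissible on the fibre $\pi^{-1}(x)$, and it satisfies $|\delta(x)|_x\leq \Theta(x)$. So it suffices to transfer both properties from the fibre over $x$ to the fibre over $p$, at the cost of errors that tend to zero as $x\to p$, and then absorb those errors using discreteness.

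\textbf{Step 1: upper semicontinuity of $\Theta$.} By Cor. \ref{Cor:monotonicitygradientcycle}, for every $x\in\Gamma$ and small $r$,
\[
\Theta(x)\leq e^{Cr}\frac{\mu_T(B_g(x,r))}{2r}\leq e^{Cr}\frac{\mu_T(B_g(p,r+d(x,p)))}{2r}.
\]
Given $\eta>0$, I first choose $r$ so small that $e^{Cr}\mu_T(B_g(p,2r))/(2\cdot 2r)\leq \Theta(p)+\eta$. Then for $d(x,p)\leq r'\ll r$ (for instance $r'=\eta r$), the right-hand side is at most $(1+O(\eta))(\Theta(p)+\eta)$. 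Hence $\limsup_{x\to p}\Theta(x)\leq\Theta(p)$. The only care needed is that the ball radii differ by a factor $1+r'/r$; this is harmless.

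\textbf{Step 2: norm comparison and discreteness.} The fibrewise min-area norm $|\delta|_x$ depends Lipschitz-continuously on $x$, as already used in Lemma \ref{lem:almoststationary}. Concretely, $|\delta|_p\leq (1+Cd(x,p))|\delta|_x$ uniformly in $\delta$. Combined with Step 1, any weight at $x$ satisfies $|\delta(x)|_p\leq \Theta(p)+o(1)$ as $x\to p$. Since $\Lambda$ is a lattice and $|\cdot|_p$ is bounded below by a positive multiple of a norm on $\Lambda\otimes\R$, the set
\[
F=\{\delta\in\Lambda:\ |\delta|_p\leq\Theta(p)+1\}
\]
is finite. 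Let $F'\subset F$ consist of those $\delta$ with $|\delta|_p>\Theta(p)$. Because $F'$ is finite, there is a gap $\eta_0=\min_{F'}(|\delta|_p-\Theta(p))>0$. Taking $x$ close enough that the $o(1)$ error is below $\eta_0$ excludes every element of $F'$, so every weight $\delta(x)$ lies in $F\setminus F'$.

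\textbf{Step 3: admissibility at $p$.} This is the step I expect to be the subtle one, since holomorphic curves need not deform between fibres. I would avoid any deformation theory and rely only on the characterisation in Example \ref{eg:K3holocurve}. For each fixed $\delta$, define
\[
f_\delta(y)=|\delta|_y-\sup_{|v|_g=1}\langle\delta,\nabla_vH(y)\rangle .
\]
By the calibration inequality (\ref{eqn:pointwisecalibration2}), $f_\delta\geq0$, and $f_\delta(y)=0$ if and only if $\delta$ is admissible at $y$. Both terms of $f_\delta$ are continuous in $y$: the first is Lipschitz as in Step 2, and the second is continuous because $H$ is smooth. Now consider each of the finitely many $\delta\in F\setminus F'$ that is not admissible at $p$. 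For such $\delta$ we have $f_\delta(p)>0$, so $f_\delta>0$ on a neighbourhood of $p$, and hence $\delta$ cannot occur as a weight there. Intersecting these finitely many neighbourhoods with the one from Step 2 gives $U$, and on $U$ every weight $\delta(x)$ lies in $A$.

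In the special Lagrangian case every class is automatically attained by flat subtori, so Step 3 is vacuous and only Steps 1 and 2 are needed.
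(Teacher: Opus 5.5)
Your proposal is correct and follows essentially the same route as the paper. Both arguments get finiteness of the candidate classes from the norm bound and discreteness of $\Lambda$, then $|\delta|_p\le\Theta(p)$ from the Lipschitz dependence of $|\cdot|_x$ together with upper semicontinuity of $\Theta$, and finally admissibility at $p$ as a closed condition; the differences are only presentational: the paper argues along a sequence $x_i\to p$ with fixed $\delta$ and extracts a limit of the calibrating unit vectors $v_i$, whereas you use an explicit gap $\eta_0$ and the continuity of $f_\delta$, and you derive the upper semicontinuity of $\Theta$ from monotonicity, which the paper simply cites.
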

			
			\begin{proof}
	By Prop. \ref{prop:tangentcone} we have a uniform bound for $|\delta(x)|_x\leq \Theta(x) \leq C$ in a given neighbourhood of $p$, so there are only finitely many possibilities for $\delta(x)\in \Lambda$. Suppose $\delta=\delta(x_i) $ for some sequence $x_i\to p$. Then by the Lipschitz continuity of $|\delta|_x$ in $x$, and the semi-upper continuity of density,
		\[
		|\delta|_p = \lim_{i\to +\infty} |\delta(x_i)|_{x_i}  \leq  \lim_{i\to +\infty} \Theta(x_i) = \Theta(p).
		\]
		Furthermore, the admissibility of $\delta$ on the K3 fibre $\pi^{-1}(x_i)$ implies
		\[
		|\delta|_{x_i}= \langle (\nabla_{v_i} H)(x_i), \delta\rangle 
		\]
		for $g$-unit vectors $v_i\in T_{x_i} B$. Passing to subsequential limits, we obtain a $g$-unit vector $v\in T_p B$, such that 
	$
			|\delta|_p = \langle (\nabla_{v} H)(p), \delta\rangle ,
	$
		so $\delta$ is admissible at $p$. Thus $\delta\in A$. By shrinking $U$, we can thus guarantee $\delta(x)\in A$ for all $x\in U$.
			\end{proof}

		Using the geodesic coordinates, we can identify $U$ with a small open neighbourhood of $0\in T_p B\simeq \R^3$.  The metric $g$ is $C^\infty$-approximated by the Euclidean metric. The set of directions
	$
		\{      v_p(\delta):  \delta\in A     \}
$
is a finite subset of $S^2$, and the possible velocity vectors
\[
\xi(x)= v_x(\delta)=  \frac{  \delta\cdot (\nabla H)(x)   }{     |\delta|_x     }
\]
satisfy $|v_x(\delta)- v_p(\delta)| \leq C |x|$. Let $w\in T_p B$ be a unit vector. As $A$ is symmetric under $\delta\to -\delta$ (See Remark \ref{rmk:reversedelta}), 
 we can write $A=A_w^+\sqcup -A_w^+\sqcup A_w^0$, 
 \[
 \begin{cases}
 		A_w^+= \{  \delta\in A:    g(v_p(\delta), w)>0      \},
 		\\
 		A_w^0=   \{  \delta\in A:    g(v_p(\delta), w)=0      \}.
 \end{cases}
 \]
There is some $\theta_0>0$, such that 
\[
\angle (w, v_p(\delta) ) < \frac{\pi}{2}- \theta_0  ,\quad    \forall \delta\in A_w^+,
\]		
For $\frac{1}{3}\theta_0   <\theta<\frac{2}{3}\theta_0$, we take the cone
\[
C_{q, w, \theta}= \{    x\in \R^3\setminus \{  0\} :   \angle( w, x-q )< \frac{\pi}{2}- \theta          \}.
\]

\begin{lem}\label{lem:cone}
The following holds for sufficiently small $r>0$. Suppose $q\in \Gamma\in B(0, r)$, and the tangent cone $C_q$ of $T$ contains some ray with weight $\delta\in A_w^+$. Then for any $0<t< r$, there is at least one point on $\Gamma\cap C_{q,w,\theta_0/3}\cap \partial B(q,t)$.

\end{lem}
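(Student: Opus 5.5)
The plan is a flux argument for a scalar projection of $T$ that is positively oriented in the direction $w$. Since $\partial T=0$ and $\nabla_w H(p)\in H^2(K3,\R)$ is a constant class, pairing the $\Lambda$-weights with it gives a closed real-valued 1-rectifiable current
\[
S(\beta)=\int_\Gamma \langle \nabla_w H(p),\delta(x)\rangle\, \beta(\xi(x))\, d\mathcal{H}^1(x),
\]
and $\partial S=0$ follows from $\partial T=0$.

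\textbf{Sign property.} By the gradient flow condition, $\xi(x)=v_x(\delta(x))$ for $\mathcal{H}^1$-a.e.\ $x$. The product $\delta\otimes\xi$ is invariant under the simultaneous sign flip $(\delta,\xi)\mapsto(-\delta,-\xi)$. The lemma preceding Lemma \ref{lem:cone} puts $\delta(x)\in A=A_w^+\sqcup -A_w^+\sqcup A_w^0$, so we may assume $\delta(x)\in A_w^+\cup A_w^0$. Then $\langle \nabla_w H(p),\delta\rangle = |\delta|_p\, g(v_p(\delta),w)\ge 0$. Moreover $|v_x(\delta)-v_p(\delta)|\le C|x|$ with $|x|\le 2r$ small compared to $\theta_0$, so on the $A_w^+$ pieces the tangent $\xi(x)$ makes angle $<\pi/2-2\theta_0/3$ with $w$. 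The $A_w^0$ pieces carry zero $S$-weight and can be ignored. Hence $S$ is a positive multiple of $\mathcal{H}^1\lfloor\Gamma$ oriented by vectors lying strictly inside the cone of half-angle $\pi/2-2\theta_0/3$ about $w$. This cone is strictly narrower than $C_{q,w,\theta_0/3}$, whose half-angle is $\pi/2-\theta_0/3$.

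\textbf{Flux bookkeeping.} Suppose for contradiction that $\Gamma\cap C_{q,w,\theta_0/3}\cap\partial B(q,t)=\emptyset$. Let $D=C_{q,w,\theta_0/3}\cap (B(q,t)\setminus \bar B(q,s))$ for a small $s\ll t$. Test $\partial S=0$ against $d\psi$, where $\psi$ is a smoothing of $\chi_D$; by slicing this says the net $S$-flux through $\partial D$ vanishes.
\begin{enumerate}
\item \emph{Outer cap.} $\Gamma$ stays a positive distance from the compact set $\bar C_{q,w,\theta_0/3}\cap\partial B(q,t)$, so the smoothing can be taken supported away from $\Gamma$ there and this part contributes zero.
\item \emph{Inner cap.} The tangent cone $C_q$ is unique and has the form of Proposition \ref{prop:tangentcone}. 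It contains a ray with weight $\delta\in A_w^+$ in direction $v_q(\delta)$, which lies inside the narrower cone. The other rays lying in $C_{q,w,\theta_0/3}$ also carry nonnegative $S$-weight, while rays with $S$-weight $0$ or with $v$ pointing against $w$ lie outside this cone (up to $O(r)$ errors). So as $s\to 0$ the flux through $\partial B(q,s)\cap C_{q,w,\theta_0/3}$ into $D$ converges to $\sum \langle\nabla_wH(p),\delta_i\rangle\ge |\delta|_p\,\sin\theta_0>0$, the sum running over rays inside the cone. The flat convergence of rescalings to $C_q$ justifies this for a.e.\ small $s$.
\item \emph{Lateral surface.} A unit vector $u$ with $\angle(u,w)<\pi/2-2\theta_0/3$ satisfies $g(u,\nu)<0$ at every point of the lateral surface of $C_{q,w,\theta_0/3}$, where $\nu$ is the outward normal, because the normal makes angle $>\pi/2$ with all directions within $\pi/2-\theta_0/3$ of $w$ (elementary geometry of the wide cone). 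So $S$ only flows into $D$ through the lateral surface, and that contribution has sign $\ge 0$ into $D$.
\end{enumerate}
Items 1–3 together give a strictly positive net influx with zero outflux, contradicting $\partial S=0$. Hence $\Gamma$ meets $C_{q,w,\theta_0/3}\cap\partial B(q,t)$.

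\textbf{Main obstacle.} The hardest step is making item 3 and the smoothing of $\chi_D$ rigorous. Near the vertex and along the lateral surface, $\Gamma$ is only rectifiable, not a graph. I would handle this by taking $\psi=\eta_1(|x-q|)\,\eta_2(\angle(w,x-q))$, with $\eta_2$ nonincreasing in the angle and $\eta_1$ a cutoff in the radius. Then $d\psi(\xi)$ splits into a radial term and an angular term, and the angular term $\eta_2'\,d\angle(\xi)$ has a definite sign because every $\xi$ with nonzero $S$-weight satisfies $\angle(\xi,w)<\pi/2-2\theta_0/3$. One uses the $O(r)$ control on $v_x(\delta)$ and the Lipschitz dependence of $|\delta|_x$ to absorb errors, taking $r$ small relative to $\theta_0$. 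The inner-radius term is then controlled by the unique tangent cone, passing to a limit $s\to 0$ along a.e.\ slice.
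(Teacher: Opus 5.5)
Your argument is correct, and it takes a genuinely different route from the paper.

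\textbf{The paper's route.} The paper works directly with the $\Lambda$-valued current.
\begin{enumerate}
\item It uses the monotonicity formula (Cor.~\ref{Cor:monotonicitygradientcycle}) and the transversality bound $|\nabla^\perp d_q|\ge C^{-1}\theta_0$ on $\partial C_{q,w,\theta}$. These give a good angle $\theta\in(\theta_0/3,2\theta_0/3)$ for which $\Gamma$ crosses $\partial C_{q,w,\theta}$ in at most $C\theta_0^{-2}$ points.
\item It writes $\partial\bigl(T\lfloor(C_{q,w,\theta}\cap B(q,t))\bigr)$ as a finite sum of $\Lambda$-weighted Dirac masses, including the vertex term $\sum_{\delta_i(q)\in A_w^+}\delta_i[q]$.
\item Only then does it pair the resulting ``Kirchhoff law'' with $\nabla_w H(p)$, arriving at $\theta_0^3\le Cr$.
\end{enumerate}

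\textbf{Your route.} You pair with $\nabla_w H(p)$ first. This gives a closed real current $S$ with nonnegative density, whose tangents lie in a cone strictly narrower than $C_{q,w,\theta_0/3}$. You then test $\partial S=0$ against $\psi=\eta_1\eta_2$. The angular term is nonnegative, the outer term vanishes, and the inner term converges to the positive flux of the tangent cone of $S$.

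\textbf{What each approach buys.} Your approach needs no counting estimate and faces no slicing or finiteness issue. The vertex contribution comes from weak convergence of $(\lambda_{q,s})_*S$ to the tangent cone, rather than being read off as a Dirac mass at $q$. It also shows that the $A_w^0$ crossings contribute exactly zero after pairing, since $\langle\nabla_w H(p),\delta\rangle=|\delta|_p\,g(v_p(\delta),w)$. So in the paper's proof the counting estimate is really only needed to make the Kirchhoff sums finite. In exchange, the paper's version phrases everything in the discrete graph language used in the rest of Section~\ref{sect:regularity}.

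\textbf{One small correction.} The hypothesis only says $\Gamma$ misses the \emph{open} cap $C_{q,w,\theta_0/3}\cap\partial B(q,t)$. So $\Gamma$ need not stay a positive distance from $\bar C_{q,w,\theta_0/3}\cap\partial B(q,t)$, contrary to your item 1. The fix is to choose $\eta_2$ equal to $1$ for angles below $\pi/2-\theta_0/2$ and vanishing for angles above $\pi/2-\theta_0/3-\sigma'$. Then the outer support of $\psi$ is a compact subset of the open cap, which the closed set $\Gamma$ avoids once the radial transition width is small. The lateral sign argument still holds, because, as you showed, the $A_w^+$ tangents make angle less than $\pi/2-2\theta_0/3$ with $w$. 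This is smaller than every angle where $\eta_2'\neq 0$.
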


\begin{proof}
	Without loss $B(0,10r)\subset U$, and recall  $\delta(x)\in A$ for $x\in U$. 
By taking $r$ sufficiently small, we can ensure that on $x\in \Gamma\cap B(0,10r)$, the drift in direction
$
| v_x(\delta)- v_p(\delta) |
$
is small enough so that
\[
\begin{cases}
	\angle (w, v_x(\delta) ) < \frac{\pi}{2}-  \theta_0+ Cr <  \frac{\pi}{2}- \frac{99}{100}\theta_0   ,\quad   & \forall \delta\in A_w^+,
	\\
	|\angle (w, v_x(\delta)) - \frac{\pi}{2} | <   Cr  <   \frac{1}{100}\theta_0        ,\quad   &  \forall \delta\in A_w^0.
\end{cases}
\]		
For any $\frac{1}{3}\theta_0   <\theta<\frac{2}{3}\theta_0$,
at any point $x\in \Gamma \cap B(0,10r)\cap \partial C_{q,w,\theta}\setminus \{ q\}$, the tangent vector $v_x(\delta)$ is therefore quantitatively \emph{transverse} to $\partial C_{q,w,\theta}$. More precisely, let $d_q$ be the distance function to $q$, then $|\nabla^\perp d_q|(x) \geq \sin ( \frac{\theta_0 }{10}  )\geq C^{-1}\theta_0$ at such an intersection point.

 But for $r$ sufficiently small,  the volume ratio is bounded on $B(q, 9r)$, so 
the monotonicity formula in Cor. \ref{Cor:monotonicitygradientcycle} implies
\[
\begin{split}
	\int_{    \Gamma \cap B(q, 9r) ) }  d_q^{-1} |\nabla^\perp d_q|^2  d\norm{ T} \leq  C.
\end{split}
\]
Since there is a positive lower bound on $\Theta(x)$ (due to the lower bound on $|\delta|_x$),
\[
	\int_{    \Gamma \cap B(q, 9r) ) }  d_q^{-1} |\nabla^\perp d_q|^2 (x) d\mathcal{H}^1(x) \leq  C.
\]
By the lower bound $|\nabla^\perp d_q|(x) \geq C^{-1}\theta_0$ on $\partial C_{q,w,\theta} \cap \Gamma \cap B(0,10r)$, we deduce
\[
\int_{    \Gamma \cap B(q, 9r) )  \cap (C_{q,w,\theta_0/3}\setminus C_{q,w,2\theta_0/3} ) }  d_q^{-1}  |\nabla^\perp d_q|   d\mathcal{H}^1(x) \leq  C\theta_0^{-1}.
\]
By the coarea formula, we can select some angle parameter $\theta\in ( \frac{1}{3}\theta_0, \frac{2}{3}\theta_0   )$, 
such that the count of (automatically transverse) intersection points
\begin{equation}\label{eqn:countingestimate}
\mathcal{H}^0(       \Gamma \cap B(q, 9r)  \cap \partial C_{q,w,\theta}     ) \leq C\theta_0^{-2}.
\end{equation}
Furthermore, for Lebesgue a.e. choice of $\theta$, we can ensure $T_x\Gamma$ is a copy of $\R$.

We now suppose for contradiction that there is some $0<t< r$, such that 
\[
\Gamma\cap C_{q,w,\theta_0/3}\cap \partial B(q,t)= \emptyset.
\]
We consider the current 
$
T_t= T\lfloor (C_{q,w,\theta}\cap B(q,t)  ),
$
whose boundary $\partial T_t$ receives contribution only from $\partial C_{q,w,\theta}\cap B(q,t)$.

Since we can simultaneuously reverse the sign of $\delta(x)$ and $\xi(x)$ without changing $T$, we can assume that at every $x\in \Gamma \cap B(0,10r)\cap \partial C_{q,w,\theta}\setminus \{q\}$, the vector $\xi(x)$ \emph{points inside}  $C_{q,w,\theta}$. 
Likewise, recall that at $q$, the orientation of the rays determines the sign of $\delta_i(q)\in A$. The key observation is that $v_x(\delta)$ \emph{points outside} $C_{w,q,\theta}$ for $\delta\in -A_w^+$, so that $\xi(x)=v_x(\delta)\in A_w^+\cup A_w^0$.
Thus
\[
\begin{split}
-\partial T_t=& \sum_{ x\in \Gamma \cap B(q,t)\cap \partial C_{q,w,\theta}\setminus \{q\} , \delta(x)\in A_w^0, \text{inward pointing}}  \delta(x) [x]
\\
&
+ \sum_{ x\in \Gamma \cap B(q,t)\cap \partial C_{q,w,\theta}\setminus \{q\} , \delta(x)\in A_w^+}  \delta(x)[x]
+ \sum_{ \delta_i(q)\in A_w^+}  \delta_i(x) [q].
\end{split}
\]
Since $\partial T_t$ is exact, we get an identity in $\Lambda$ (`Kirchhoff's Law'),
\[
\begin{split}
&   - \sum_{ x\in \Gamma \cap B(q,t)\cap \partial C_{q,w,\theta}\setminus \{q\} , \delta(x)\in A_w^0, \text{inward pointing}}  \delta(x)
	\\
	=&  
	\sum_{ x\in \Gamma \cap B(q,t)\cap \partial C_{q,w,\theta}\setminus \{q\} , \delta(x)\in A_w^+}  \delta(x)
	+ \sum_{ \delta_i(q)\in A_w^+}  \delta_i(x) .
\end{split}
\]
Note that while some of these sums may be empty, the term $\sum_{ \delta_i(q)\in A_w^+}  \delta_i(x) $ is nonzero by assumption.
We take the dot product with $\nabla H(p)$, and take the inner product with $w$. Each term from the $A_w^0$ collection is bounded by 
\[
C | g(v_x(\delta), w) |=  |C g( v_x(\delta)- v_p(\delta),   w ) |\leq C r ,
\]
while each nonzero contribution from the $A_w^+$ collection is bounded below by $C^{-1} \theta_0$. Thus 
\[
Cr \# \{ x\in     \Gamma \cap  B(q, t) \cap \partial C_{q,w,\theta}:  \delta\in A_w^0          \} \geq C^{-1}\theta_0.
\]
Contrasting this with the counting estimate (\ref{eqn:countingestimate}), we see
\[
\theta_0^3 \leq Cr,
\]
which is a contradiction for small enough $r$. This proves  the lemma.
\end{proof}

	We now partition $A$ into a disjoint union $\sqcup A_k$, where each $A_k$ collects together all $\delta\in A$ associated to the same $v_p(\delta)\in T_p B$. Geometrically, this means all the holomorphic curve classes for the \emph{same complex structure} $I_v$, with mass bounded by $\Theta(p)$. 	We recall that the tangent one $C_p$ is a weighted sum of distinct rays:
	\[
	C_p= \sum_i  \delta_i\otimes   [\R_+ v_p(\delta_i)] .
	\]
	This suggests that we should be able to locally decompose $T$ near $p$ into a finite sum $T=\sum T_i$, with each piece $T_i$ modelled on $ \delta_i\otimes   [\R_+ v_p(\delta_i)] $. As above, we identify small balls around $p$ as open neighbourhoods of $0\in T_p B=\R^3$.

	\begin{thm}\label{thm:localstructure}
		(Local structure for $T$) 
	Let $T$ be a $\Lambda$-weighted gradient cycle on $B$, and $p\in \text{spt}(T)$, then
	the following holds on a small enough neighbourhood $U$ around $p$. We can decompose $T$ as a sum $T= \sum_i T_i$, where each $T_i$ is a $\Lambda$-valued 1-rectifiable current, which is a $\Lambda$-weighted gradient cycle on $U\setminus \{ p\}$, and satisfy the following properties:
		\begin{enumerate}
			\item  The boundary of $T_i$ is $\partial T_i= - \delta_i \otimes [p]$.
			
			\item  The tangent cone of $T_i$ at $p$ is $  \delta_i\otimes   [\R_+ v_p(\delta_i)]  $.

			\item   At every $x\in \text{spt}(T_i)$, up to suitable sign choice for $\delta(x)$, we have
			$v_p(\delta(x))= v_p(\delta_i)$. (If  the tangent cone at $x$ has more than one direction, then $\delta(x)$ refers to the $\Lambda$-weight associated to any ray.)

				\item (Second order tangency) If we choose a coordinate system $x_1, x_2, x_3$ with $\frac{\partial}{\partial x_i}$ orthonormal at $p$, and $v_p(\delta_i)= \frac{\partial}{\partial x_i}$ at $p$, then on a sufficiently small neighbourhood of $p$,
			\[
			\text{spt}(T_i)\subset \{     \sqrt{x_2^2+x_3^2} < C x_1       \}.
			\]

			\item  For each value of $x_1>0$, the vertical slice of $T_i$ consists of finitely many points, such that $\sum_x \delta(x)=\delta_i \in \Lambda$, and
				\[
			\sum_x  |\delta(x)|_p = (\nabla_{ v_p(\delta_i)} H)(p)\cdot \delta_i= |\delta_i |_p. 
			\]
			(Here if the tangent cone $C_x$ at $x$ has more than one direction, then we sum over the  $\Lambda$-weights $\delta$ for all the rays of $C_x$ whose outward pointing direction $v_x(\delta)$ is compatible with the orientation choice $v_p(\delta)=v_p(\delta_i)$.)

		\end{enumerate}
	\end{thm}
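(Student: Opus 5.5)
The plan is to build the pieces $T_i$ by grouping directions. Partition $A=\sqcup A_k$ according to the common direction $v_p(\delta)$, and orient each class so that $v_p(\delta)$ is the outward direction. Near $p$, every point $x\in\Gamma\cap U$ carries weights $\delta(x)\in A$ by the preceding lemma. Since $|v_x(\delta)-v_p(\delta)|\le C|x|$ and the directions $\{v_p(\delta)\}$ are finitely many distinct points of $S^2$, the possible velocity directions near $p$ cluster into finitely many disjoint small caps. Define $T_k:=T\lfloor\{x\in U\setminus\{p\}: \delta(x)\in A_k\cup -A_k\}$; this set is determined $\mathcal H^1$-a.e. by the approximate tangent. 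The tangent cone $C_p$ groups its rays by direction, and $\delta_k$ denotes the sum of the weights on the ray $\R_+v_p$ for $A_k$. Rays with direction $-v_p$ come from the class $-A_k$, so this grouping should be organised by unoriented lines, and the two opposite rays of a line are then separated using the position of the support.

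The first main step is the cone confinement, property 4. Apply Lemma \ref{lem:cone} in contrapositive form, together with the transversality and counting argument of its proof, with $w=\pm v_p(\delta_k)$ and small $\theta$, at every point $q\in\operatorname{spt}(T_k)$. Since the velocity of $T_k$ is always within $C|x|$ of $\pm v_p(\delta_k)$, each component of $\operatorname{spt}(T_k)$ is a Lipschitz graph over the $x_1$-axis with slope $O(|x|)$. Starting from $p$, this gives $\sqrt{x_2^2+x_3^2}\le C x_1^2$, so in particular $\le Cx_1$. Separation into the half cones $x_1>0$ and $x_1<0$ comes from the tangent cone at $p$: by uniqueness of the tangent cone, $\Gamma\cap B(p,r)$ lies in a small conical neighbourhood of the rays of $C_p$ for all small $r$. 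We then cut $T_k$ into the part inside the cone around $+v_p(\delta_k)$, which is $T_i$, and the part inside the cone around $-v_p(\delta_k)$, which is a separate piece. Pieces belonging to different $A_k$ are disjoint away from $p$, again by the cone conditions.

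Next come closedness on $U\setminus\{p\}$ and the boundary computation. At any $x\neq p$, the tangent cone $C_x$ is balanced. Its rays split by direction class, and since the direction caps of different classes are disjoint, the balancing identity $\sum\delta=0$ must hold separately within each class. This follows by pairing with $\nabla H(x)$ and the vectors $w$, as in the Kirchhoff argument of Lemma \ref{lem:cone}: the projections of the different groups onto their own direction cannot cancel each other. Making this rigorous needs a slicing version. Slice $T_i$ by $x_1=s$; the cone property and the finite counting estimate (\ref{eqn:countingestimate}) show each slice is finitely many points. The slice sum $\sum_x\delta(x)$ is locally constant in $s$ because $\partial T=0$ and the other pieces do not meet the slice region. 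As $s\to0$ the sum converges to the tangent-cone weight $\delta_i$, which gives properties 1 and 5's first identity. Property 5's mass identity then follows from the calibration equality, since all weights in the slice are admissible with the same $I_{v}$ up to $O(s)$. We have $|\delta(x)|_p=\langle\nabla_{v_p(\delta_i)}H(p),\delta(x)\rangle$ for $\delta(x)\in A_k$, and linearity gives $\sum|\delta(x)|_p=\langle\nabla_{v_p(\delta_i)}H(p),\delta_i\rangle=|\delta_i|_p$. Property 3 holds by construction, and property 2 follows from confinement plus the slice sum.

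I expect the main obstacle to be rigorously separating classes at points $x\ne p$ where several directions meet. In particular, rays of $A^0_w$ type could a priori transfer weight between different $T_i$, and the slice counts must be finite rather than merely of finite $\mathcal H^0$ measure almost everywhere. To handle this I would use the quantitative transversality ($|\nabla^\perp d|\ge C^{-1}\theta_0$) and the monotonicity bound $\int d_q^{-1}|\nabla^\perp d_q|^2\,d\|T\|\le C$ exactly as in Lemma \ref{lem:cone}.
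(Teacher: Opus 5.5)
There is a genuine gap, and it is where you suspected, but it is more than a technicality. Restricting $T$ to the set $\{\delta(x)\in A_k\cup -A_k\}$, unlike restricting to an open set, gives no control of $\partial T_k$ away from $p$. Your justification is that at each $x\neq p$ the balancing of $C_x$ splits class by class, because contributions of different direction classes cannot cancel. As a local statement this is false. An ordinary trivalent vertex of a balanced gradient graph has three pairwise non-parallel edges whose weights sum to zero only jointly. Pairing with $\nabla H(x)$ just gives the vector identity $\sum|\delta|_x v_x(\delta)=0$, which does not force each direction class to balance separately.

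The same gap reappears in your confinement step. The Lipschitz-graph argument starting from $p$ presupposes that every piece of $\text{spt}(T_k)$ emanates from $p$. A priori, a short class-$k$ branch could be born at a vertex $q$ lying inside the thin cone around a ray $\R_+v_p(\delta_j)$ of a different class. Applying Lemma \ref{lem:cone} with $w=\pm v_p(\delta_k)$ does not exclude this. It only says the branch propagates into the wide cone $C_{q,w,\theta_0/3}$, which may well contain the continuation of that thin cone itself. So declaring that property 3 holds by construction merely relabels the difficulty. The real content of the theorem is that near $p$ no point in the sector of one ray carries a weight of another direction class, and this is a non-local fact.

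The paper avoids both problems by decomposing spatially. Uniqueness of $C_p$ together with the noncollapsing bound of Cor. \ref{cor:noncollapsing} puts $\text{spt}(T)\cap B(0,10r)$ inside disjoint thin cones $R_i$ of angle $\gamma$ around the rays of $C_p$. Hence $T_i:=T\lfloor R_i$ is automatically closed on $U\setminus\{p\}$, and items 1 and 2 are read off from the tangent cone.

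Item 3 is then proved by contradiction. Suppose $q\in\text{spt}(T_i)$ carries a weight with $v_p(\delta(q))\neq\pm v_p(\delta_i)$. Choose $w$ with $g(w,v_p(\delta_i))=0$ but $g(w,v_p(\delta(q)))>0$, i.e.\ $\delta_i\in A^0_w$ and $\delta(q)\in A^+_w$. Lemma \ref{lem:cone} produces points of $\Gamma$ at every distance $t<r$ from $q$ inside $C_{q,w,\theta_0/3}$. Take $t$ a small multiple of $|q|$. Such a point is displaced by at least $t\sin(\theta_0/3)$ in the direction $w$, which is transverse to $R_i$, and is too close to $q$ to reach any other $R_{i'}$. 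So it lies outside $\bigcup R_{i'}$ once $\gamma\ll\theta_0$. This transverse choice of $w$ is the idea you are missing.

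Once item 3 is available, item 4 follows from the drift bound $|v_x(\delta)-v_p(\delta)|\le C|x|$. Your slicing argument for item 5 is then essentially the paper's: constancy of $\sum\delta(x)$, identification with $\delta_i$ through the tangent cone, and linearity of $\delta\mapsto\delta\cdot\nabla_{v_p(\delta_i)}H(p)$ on $A_k$. Finiteness of each slice comes from the resulting mass bound together with the positive lower bound of $|\delta|_p$ on the nonzero elements of $A$. It does not come from (\ref{eqn:countingestimate}), which counts intersections with $\partial C_{q,w,\theta}$ rather than with the planes $x_1=s$.
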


			\begin{proof}
				We restrict attention to sufficiently small balls $B(0, 10r)$, and let $\gamma>0$ be a small number, so that the cones
				\[
				R_i= \{    x\in   \R^3\cap B(0,10r)  : \angle (x ,  v_p(\delta) ) < \gamma    \}
				\]
				are all disjoint. Since $C_p$ is the unique tangent cone at $p$, using the local noncollapsing estimate in Cor. \ref{cor:noncollapsing}, the support of $T$ is contained in the union of $R_i$. We let $T_i = T\lfloor R_i$. Clearly $T_i$ is a $\Lambda$-weighted gradient cycle on $B(p,r)\setminus \{ p\}$.

			The tangent cone of $T$ is the sum of the independent contributions from the tangent cones of $T_i$, hence item 2 holds. This also implies item 1.

			For item 3, we suppose there is some $q \in \text{spt}(T_i)\cap B(0, r)$ such that $v_p(\delta(q) )$ does not agree with $\pm v_p(\delta_i)$, where $\delta(q)$ refers to  the $\Lambda$-weights associated to some ray in the tangent cone $C_q$. We can then find some $w\in \R^3$,  so that $\delta_i\in A_w^0$, but $\delta(q)\in A_w^+$. Assuming that $r$ is sufficiently small, then by Lemma \ref{lem:cone}, 
			 for any $0<t< r$, there is at least one point on $\text{spt}(T)\cap C_{q,w,\theta_0/3}\cap \partial B(q,t)$. By choosing sufficiently small $\gamma>0$ depending on $\theta_0$, this would contradict the fact that $T$ is supported on $\cup_i R_i$.

			Thus $v_p(\delta(q))$ must agree with $\pm v_p(\delta_i)$. Using the flexibility to reverse the signs of both $\xi(x)$ and $\delta(x)$ simultaneously without changing $T$, we can arrange that $v_p(\delta(q)) = v_p(\delta_i)$. This proves item 3.

			Since $|v_x(\delta)- v_p(\delta)| \leq C|x|$, the component of the velocity $\xi(x)= v_x(\delta(x))$ perpendicular to $v_p (\delta(x)) = v_p (\delta_i)$ is $O(|x|)$. This implies item 4.

			For item 5, we first note that on the vertical slices for a.e. $x_1>0$, the tangent cone $C_x$ is supported on a line, and there is no multi-valued issue for $\delta(x)$. Since for any $0<s_1<s_2$, the vertical slices for $x_1=s_1$ and $x_1=s_2$ differ by the boundary of $T_i \lfloor \{  s_1< x_1<s_2     \}$, we see $\sum_x \delta(x)\in \Lambda$ is constant for all these generic $x_1$. This constant must be $\delta_i$ by item 2.

		Since $v_p(\delta(x))=v_p(\delta_i)$, we have
$
			\delta(x)\cdot (\nabla_{v_p(\delta_i)} H)(p) = |\delta(x) |_p,
		$
			so on these generic vertical slices,
			\[
			\sum_x  |\delta(x)|_p = \sum_x 	\delta(x)\cdot (\nabla_{v_p(\delta_i)} H)(p)= \delta_i\cdot  (\nabla_{v_p(\delta_i) } H)(p)= |\delta_i|_p.
			\]
			In  particular, the mass for the slices are uniformly bounded.

			To extend the result to all $x_1$, we use the fact that non-generic slices arise as  the limits of the generic slices.
			\end{proof}

	\begin{proof}[Proof of Theorem \ref{thm:regularitySlag}]
		Fix $p\in\Gamma$. By Theorem \ref{thm:localstructure}, we can write $T=\sum_iT_i$ near $p$, and the coefficients on $T_i$ satisfy $v_p(\delta(x))=v_p(\delta_i)$. In the special Lagrangian case, Example \ref{eg:minareatorus} shows that these coefficients lie on the same positive rational ray. Hence $v_x(\delta(x))=v_x(\delta_i)$, and the support of $T_i$ is an integral curve of this smooth nonvanishing vector field. The local structure theorem gives only finitely many such sectors, with no accumulating branches. Thus a neighbourhood of $p$ is a finite embedded graph. The result follows.
	\end{proof}

We now prove Theorem \ref{thm:regularityassociative}. %(The special Lagrangian case Thm \ref{thm:regularitySlag} can be proved in an almost identical way; the main simplification  is that $v_p(\delta)= v_p(\delta')$ iff $\delta$ is a positive multiple of $\delta'$.)

\begin{proof}
(Thm. \ref{thm:regularityassociative}) We recall the following technical assumption: whenever there is some choice of complex structure $I$ on the hyperk\"ahler K3 surface $\pi^{-1}(p)$, such that two classes $\delta, \delta'\in H_2(K3,\Z)$ both admit an $I$-holomorphic curve representative, with areas $|\delta|_p, |\delta'|_p \leq \Theta(p)$, then $\delta, \delta'$ are proportional.

By Theorem \ref{thm:localstructure},
in a sufficiently small neighbourhood of $p$, all these $\delta(x)\in A\subset H_2(K3, \Z)$, so admit some mass minimizer on $\pi^{-1}(p)$ which is holomorphic with respect to the complex structure determined by $v_p(\delta(x))$, and has mass bounded by $\Theta(p)$. Moreover, for any $x\in \text{spt}(T_i)$, the $\Lambda$-weight $\delta(x)$ satisfies $v_p(\delta(x))= v_p(\delta_i)$, so all these holomorphic curves share the same choice of complex structure. By the technical assumption, these $\delta(x)$ must be a $\Q$-multiple of $\delta_i$. But this implies that $v_x(\delta(x))= v_x(\delta_i)$ at all $x\in \text{spt}(T_i)$, which is proportional to $(\nabla H)(x)\cdot \delta_i$. Thus the support of $T_i$ is an integral curve of the vector field $\nabla (H\cdot \delta_i)$, namely a gradient flowline of $H(x)\cdot \delta_i$.

This shows that a sufficiently small neighbourhood of $p$ is an embedded graph with one vertex at $p$, and a finite number of edges which are gradient flowlines associated to the weights $\delta_i$.
\end{proof}

\begin{rmk}
Suppose $H: B\to H^2(K3,\R)$ is a generic choice of smooth positive section. If $\delta, \delta'$ are two $\Q$-linearly independent classes, then the locus where the two vector fields $\nabla (H\cdot \delta)$ and $\nabla (H\cdot \delta')$ are proportional, should be a (possibly empty) codimension two subset in $B$. It is a separate question which classes admit holomorphic curve representatives. Under a uniform bound on $\Theta(p)$, only finitely many classes $\delta$ are involved. The technical assumption would hold if $\text{spt}(T)$ avoids this codimension two locus, and then Thm. \ref{thm:regularityassociative} would conclude that $T$ is locally a balanced gradient graph, with finitely many vertices within any compact region.
\end{rmk}

		\subsection{Open questions}

		We mention some open questions.

		\begin{Question}
		Can we drop the technical assumption  in Thm. \ref{thm:regularityassociative} about the $H_2(K3,\Z)$ classes?
		\end{Question}

		For comparison, Allard-Almgren \cite{AllardAlmgren} developed a celebrated regularity theory for \emph{1-dimensional stationary varifolds} in Riemannian manifolds, \emph{not necessarily assuming the integrality of the density $\Theta(x)\in \R_+$}. Under the assumption that $\Theta(x)$ only takes \emph{discrete} values, they prove that the support of the varifold is  an \emph{embedded graph} whose edges are geodesics, and the number of vertices is locally finite. When the discreteness assumption is dropped, they produced an example of a point with \emph{infinite complexity}.

		In our setting, the $\Lambda$-weighted gradient cycle shares the feature that the generalised mean curvature is locally $L^\infty$ (\cf Lemma \ref{lem:almoststationary}), and the density $\Theta(x)$ in general only takes value in real numbers. The lattice $\Lambda$ is discrete, while the density $\Theta(x)\in \R$ in general varies continuously with $x$. When the technical assumption is dropped, the conceivable bad behaviour is that several gradient flowlines with distinct $\delta(x)\in \Lambda$ may be approximately parallel, with a possibly infinite number of intersection points that may accumulate somewhere.

		The specific advantage of our setting is that the gradient flowlines obey first order equations, instead of second order, and at any given point, the number of possible tangent directions is finite (due to the finiteness of holomorphic curve classes $\delta$ in the K3 surface subject to bounded area). This raises the hope that infinite complexity examples may not occur, or can be ruled out under real analyticity or genericity assumptions on the positive section $H$.

		\begin{Question}
			What can we say about the limiting current $L_\infty$, beyond its homological information captured by the $\Lambda$-weighted gradient cycle $T$?
		\end{Question}

		For instance, one can hope that $L_\infty$ is in fact an integral current, and the fibrewise slices of $L_\infty$ along the gradient flowlines are $I_v$-holomorphic curves with homology class $\delta$ contained in the K3 fibres in the associative case (\cf Example \ref{eg:K3holocurve}), resp. the sum of translated copies of flat subtori in the class $\delta$ contained in the $T^n$-fibres in the special Lagrangian case (\cf Example \ref{eg:minareatorus}). Furthermore, one would like to know how these fibrewise slices vary within the moduli space of holomorphic curves.

The main problem is that given the sequence of special Lagrangian (resp. associative) cycles $L_i$, one needs to control how fast the slices of $L_i$ can oscillate in the fibre direction, over a macroscopic base length scale of order one.
If this oscillation is too wild, then it is not even clear that the slices of $L_\infty$ are integral cycles in the fibres, instead of just a closed normal current, such as an $\R$-weighted average over a moduli space of holomorphic curves. In contrast, the $\Lambda$-valued current $T$ is better behaved, because homological classes are prevented from wild oscillation.

	%	real analyticity of the base? issue: tangency for the integral curves associated to two different fibre classes

%caveat: drift in the moduli of fibrewise holo curve

		\begin{Acknowledgement}
			The author is supported by the Royal Society URF. 	Question \ref{Question} seems to be quite well known within the special holonomy community, and the author thanks Y-S Lin ,  S. Esfahani  and Y. Zhang for bringing this up in discussions, and Y. Zhang for comments. 
		\end{Acknowledgement}

		\thebibliography{}
		
		\bibitem{AllardAlmgren}   Allard, W. K.; Almgren, F. J., Jr. The structure of stationary one dimensional varifolds with positive density. Invent. Math. 34 (1976), no. 2, 83--97.

	\bibitem{BridgelandSmith}   Bridgeland, Tom; Smith, Ivan. Quadratic differentials as stability conditions. Publ. Math. Inst. Hautes Études Sci. 121 (2015), 155--278.

		\bibitem{ChiuLin} Chiu, Shih-Kai., Lin, Yu-Shen. Special Lagrangian submanifolds in K3-fibered Calabi-Yau 3-folds. https://arxiv.org/abs/2410.17662

		\bibitem{ChiuLiLin}  Chiu, Shih-Kai; Lin, Yu-shen; Li, Yang. From tropical curves to special Lagrangians. https://arxiv.org/abs/2509.04843.

		\bibitem{Donaldson} 
		
		Donaldson, Simon. Adiabatic limits of co-associative Kovalev-Lefschetz fibrations. Algebra, geometry, and physics in the 21st century, 1--29, Progr. Math., 324, Birkhäuser/Springer, Cham, 2017.

		\bibitem{DonaldsonScaduto} 
		Donaldson, Simon; Scaduto, Christopher. Associative submanifolds and gradient cycles. Surveys in differential geometry 2019. Differential geometry, Calabi-Yau theory, and general relativity. Part 2, 39--65, Surv. Differ. Geom., 24, Int. Press, Boston, MA, [2022], ©2022.

		\bibitem{GrossSiebert}
		
		Gross, Mark; Siebert, Bernd. Intrinsic mirror symmetry and punctured Gromov-Witten invariants. Algebraic geometry: Salt Lake City 2015, 199--230, Proc. Sympos. Pure Math., 97.2, Amer. Math. Soc., Providence, RI, 2018.

		\bibitem{Livaluative}
		
		Li, Yang. 
Valuative independence and metric SYZ conjecture. 
https://arxiv.org/abs/2605.00516

		\bibitem{Mikhalkin} 
		
		Grigory Mikhalkin. Enumerative tropical algebraic geometry in R2. J. Amer.
		Math. Soc., 18(2):313–377, 2005.

		\bibitem{Parker} 
		
		Parker, Brett. Notes on exploded manifolds and a tropical gluing formula for Gromov-Witten invariants. Gromov-Witten theory, gauge theory and dualities, 17 pp., Proc. Centre Math. Appl. Austral. Nat. Univ., 48, Austral. Nat. Univ., Canberra, 2019.

\bibitem{Simon}   Simon, Leon. Lectures on geometric measure theory. Proceedings of the Centre for Mathematical Analysis, Australian National University, 3. Australian National University, Centre for Mathematical Analysis, Canberra, 1983. {\rm vii}+272 pp. ISBN: 0-86784-429-9
		
	\bibitem{Smith} 

Smith, Ivan. Quiver algebras as Fukaya categories. Geom. Topol. 19 (2015), no. 5, 2557--2617.

		\bibitem{SYZ}  Strominger, Andrew; Yau, Shing-Tung; Zaslow, Eric. Mirror symmetry is $T$-duality. Nuclear Phys. B 479 (1996), no. 1-2, 243--259.

		\bibitem{White} White, Brian. Rectifiability of flat chains. Ann. of Math. (2) 150 (1999), no. 1, 165--184.

	\end{document}